\newtheorem{theorem}{Theorem}[section]
\newtheorem{df}[theorem]{Definition}
\newtheorem{lemma}[theorem]{Lemma}
\newtheorem{prop}[theorem]{Proposition}
\newtheorem{corr}[theorem]{Corollary}
\newtheorem{example}[theorem]{Example}
\newtheorem{remark}[theorem]{Remark}
\def\residual{\mathrm{res}}
\def\etale{\'{e}tale }
\def\vrho{\varrho}
\def\vrhobar{\varrhobar}
\def\rbar{\overline{r}}
\def\rhobar{\overline{\rho}}
\def\vrhobar{\overline{\varrho}}
\def\PGL{\mathrm{PGL}}
\def\Sym{\mathrm{Sym}}
\def\PSL{\mathrm{PSL}}
\def\SL{\mathrm{SL}}
\def\F{\mathbf{F}}
\def\Fbar{\overline{\F}}
\def\mubar{\overline{\mu}}
\def\Q{\mathbf{Q}}
\def\GL{\mathrm{GL}}
\def\p{\mathfrak{p}}
\def\q{\mathfrak{q}}
\def\Z{\mathbf{Z}}
\def\Gal{\mathrm{Gal}}
\def\eps{\epsilon}
\def\epsp{\omega}
\def\ord{\mathrm{ord}}
\def\OL{\mathcal{O}}
\def\Qbar{\overline{\Q}}
\def\X{\mathfrak{X}}
\def\Xgeom{\X^{\mathrm{geom}}}
\def\v{\mathbf{v}}
\def\w{\mathbf{w}}
\def\Rbox{R^{\Box}}
\def\Rboxtv{R^{\Box,\tau,\v}}
\def\Spec{\mathrm{Spec}}
\def\LL{\mathscr{L}}
\def\C{\mathbf{C}}
\def\lifting{2.2.1 }
\def\liftingflat{2.2.1 }
\def\lifta{1.3 }
\def\liftb{1.4 }
\def\liftc{1.3.2 }
\def\zap{\kern+0.05em{\text{\reflectbox{$\lightning$} }}}
\def\t{r}
\def\Kbar{\overline{K}}
\def\Sh{\mathrm{Sh}}
\def\A{\mathcal{A}}
\begin{document}

\title{Even Galois Representations and the Fontaine--Mazur conjecture II}
\author{Frank Calegari}
\thanks{Supported in part by 
NSF Career Grant DMS-0846285 
and the Sloan Foundation.
MSC2010 classification: 11R39, 11F80}
\begin{abstract} We prove, under mild hypotheses, that
there are no irreducible  two-dimensional potentially semi-stable
\emph{even} $p$-adic Galois representations
of $\Gal(\Qbar/\Q)$ with distinct Hodge--Tate weights. This removes the
ordinary hypotheses required in our previous work~\cite{CalegariFM}.
We construct examples of irreducible two-dimensional residual 
representations that have no characteristic zero geometric
deformations.
\end{abstract}
\maketitle

\section{Introduction}

Let $G_{\Q}$ denote the absolute Galois group of $\Q$, and let
$$\rho: G_{\Q} \rightarrow \GL_2(\Qbar_p)$$
be a continuous irreducible representation unramified away from finitely many primes.
In~\cite{FM}, Fontaine and Mazur conjecture that if $\rho$ is semi-stable at $p$,
then either $\rho$ is the Tate twist of an even representation with finite image or
$\rho$ is modular. In~\cite{KisinFM}, Kisin establishes this conjecture  in almost all
cases under the additional hypotheses that $\rho|D_p$ has distinct Hodge--Tate weights and
$\rho$ is \emph{odd} (see also~\cite{EmertonFM}). The oddness condition in Kisin's work is required
 in order to invoke
the work of Khare and Wintenberger~\cite{Khare, Khare2} on Serre's conjecture. If $\rho$ is even
and $p > 2$, however, then $\rhobar$ will never be modular. Indeed, when $\rho$ is
even and $\rho|D_p$ has distinct Hodge--Tate weights, the conjecture of Fontaine
and Mazur predicts that $\rho$ does not exist.
In~\cite{CalegariFM}, some progress was made towards proving this claim  under the additional assumption
that $\rho$ was \emph{ordinary} at $p$. The main result of this paper is to remove this
condition. Up to conjugation, the image of $\rho$ lands in $\GL_2(\OL)$ where
$\OL$ is the ring of integers of some finite extension $L/\Q_p$ (see Lemme~2.2.1.1 of~\cite{BM}). Let $\F$ denote the residue field, and let
$\rhobar: G_{\Q} \rightarrow \GL_2(\F)$ denote the corresponding residual
representation. We prove:

\begin{theorem}  Let
$\rho: G_{\Q} \rightarrow \GL_2(\Qbar_p)$ be a continuous \label{theorem:even}
Galois representation which is unramified except at a finite number of primes. Suppose
that $p > 7$, and, furthermore, that
\begin{enumerate}
\item $\rho |D_p$ is potentially semi-stable, with distinct Hodge--Tate weights.
\item The residual representation $\rhobar$ 
is absolutely irreducible
and not of dihedral type.
\item $\rhobar|D_p$ is not a twist of a representation of the form
$\displaystyle{ \left( \begin{matrix}\epsp & * \\ 0 & 1  \end{matrix} \right)}$ where
$\epsp$ is the mod-$p$ cyclotomic character.
\end{enumerate}
Then $\rho$ is modular.
\end{theorem}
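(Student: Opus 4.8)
The plan is to separate the argument according to the parity of $\rho$. When $\rho$ is \emph{odd} (i.e.\ $\det\rho(c)=-1$ for a complex conjugation $c$), Serre's conjecture, proved by Khare and Wintenberger~\cite{Khare,Khare2}, shows that $\rhobar$ is modular, and then hypotheses~(2) and~(3) are exactly what is required for the modularity lifting theorems of Kisin~\cite{KisinFM} and Emerton~\cite{EmertonFM} -- which already apply to any $\rho$ that is potentially semi-stable at $p$ with distinct Hodge--Tate weights -- to conclude that $\rho$ itself is modular. Since an even $\rho$ with $p>2$ can never be modular, the real content of the theorem is that \emph{no} even $\rho$ satisfying (1)--(3) exists, the conclusion of the theorem then holding vacuously; the remainder of the proof establishes this non-existence, and this is where the ordinarity restriction of~\cite{CalegariFM} is removed.

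So suppose $\rho$ is even, so that $\det\rho(c)=1$. Although a $2$-dimensional even representation is never modular, the point is that the $3$-dimensional representation $r:=\ad^0\rho=\Sym^2\rho\otimes(\det\rho)^{-1}$ is self-dual and orthogonal (it preserves the Killing form), and, because $\rho$ has distinct Hodge--Tate weights $\{a,b\}$, it is de Rham with \emph{regular} Hodge--Tate weights $\{a-b,0,b-a\}$. Hypotheses~(2), (3) and $p>7$ guarantee that $\rhobar$ has sufficiently large image -- in particular that $r$ is residually irreducible, and indeed adequate after restriction to $G_{\Q(\zeta_p)}$. I would therefore apply the potential automorphy theorems for regular, essentially self-dual, de Rham representations, now available in the potentially semi-stable (non-ordinary) generality -- this being precisely where the analysis of the local deformation rings $\Rboxtv$ and $\Rboxtvc$ enters, replacing the ordinary local deformation rings used in~\cite{CalegariFM} -- to produce a totally real Galois extension $F/\Q$, which we may take linearly disjoint over $\Q$ from the extension cut out by $\overline{r}$, over which $r|_{G_F}$ is automorphic, cut out by a regular algebraic, essentially self-dual, cuspidal automorphic representation $\Pi$ of $\GL_3(\mathbf{A}_F)$ (cuspidality holding because $r|_{G_F}$ is irreducible).

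Now one descends from $\GL_3$ to $\GL_2$: a self-dual cuspidal automorphic representation of $\GL_3$ is, by the theory of the symmetric square (Gelbart--Jacquet), the functorial lift $\Sym^2\sigma$ of a non-dihedral cuspidal automorphic representation $\sigma$ of $\GL_2(\mathbf{A}_F)$, and since $\Pi$ is regular algebraic so is $\sigma$; thus $\sigma$ corresponds to a Hilbert modular cuspidal eigenform of weight $\geq 2$, with an associated Galois representation $\rho_\sigma\colon G_F\to\GL_2(\Qbar_p)$. Comparing Galois representations on the $\GL_3$ side gives $\ad^0\rho_\sigma\cong\ad^0\rho|_{G_F}$, and as $\rho_\sigma$ and $\rho|_{G_F}$ are both irreducible this forces $\rho|_{G_F}\cong\rho_\sigma\otimes\chi$ for some character $\chi\colon G_F\to\Qbar_p^\times$. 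Taking determinants and evaluating at a complex conjugation $c_v$ attached to a real place $v$ of $F$ then yields
\[
1=\det\bigl(\rho|_{G_F}\bigr)(c_v)=\det(\rho_\sigma)(c_v)\cdot\chi(c_v)^2=(-1)\cdot 1=-1
\]
in $\Qbar_p$: here $\det(\rho|_{G_F})(c_v)=\det\rho(c)=1$ because $\rho$ is even, $\det(\rho_\sigma)(c_v)=-1$ because the Galois representation of a Hilbert modular form of weight $\geq 2$ is totally odd, and $\chi(c_v)^2=\chi(c_v^2)=1$. This is absurd since $p>2$, so no even $\rho$ satisfying (1)--(3) can exist, which completes the proof.

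I expect the main obstacle to be the potential automorphy step for $r=\ad^0\rho$ in the full potentially semi-stable (rather than ordinary) setting: this requires both a sufficiently robust potential automorphy / change-of-weight theorem and, underlying it, precise information about the non-ordinary potentially semi-stable local deformation rings $\Rboxtv$, $\Rboxtvc$ at $p$ -- their flatness over $\OL$, their dimension, and the geometry of their special fibres. A secondary technical point is to extract, from hypotheses~(2), (3) and the bound $p>7$, the largeness and adequacy of the residual representations $\rhobar$ and $\overline{r}$ and of their restrictions to the auxiliary fields that arise, so that the potential automorphy theorems and the Gelbart--Jacquet descent genuinely apply.
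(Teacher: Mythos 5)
There is a genuine gap in the central step of your proposal, and it is precisely the obstruction the paper is designed to circumvent. You want to apply a potential automorphy theorem to $r=\ad^0\rho$ directly. But the potential automorphy theorems of~\cite{BLGGT} (Theorem~A, Theorem~4.5.1, etc.) require the local representation at $p$ to be \emph{potentially diagonalizable}, and this is simply not known for a general potentially semi-stable (or even potentially crystalline) $\rho|D_p$; it is established essentially only in the ordinary case and in the Fontaine--Laffaille range. Your remark that the needed theorem is ``now available in the potentially semi-stable (non-ordinary) generality'' is not correct: no such theorem exists, and this is exactly why~\cite{CalegariFM} had to assume ordinarity. Knowing the structure of the rings $\Rboxtv$, $\Rboxtvc$ (their flatness, dimension, etc.) is not the issue; the issue is proving that $\rho|D_p$ lies on the same irreducible component of such a ring as a ``nice'' (diagonalizable) point, which is open. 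The paper spells this out in the introduction (``one runs into the difficulty of showing that $\rho|D_p$ is potentially diagonalizable \dots which seems out of reach at present'') before turning to a different strategy.

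Beyond that gap, your route also diverges from the paper's in its endgame. The paper never tries to prove $\ad^0\rho$ itself is potentially automorphic. Instead it builds two pairs of ``shadow'' Hilbert modular forms $g,h$ and a $\GL(3)$ shadow $\pi$ so that (i) $\rho_g|D_v$ lies on the same local component as $\rho|D_v$ at $v|p$, (ii) $\rho_h$ is ordinary crystalline with $\rhobar_h=\rhobar_g$, and (iii) $\rhobar_\pi=\Sym^2\rhobar$ with $\pi$ ordinary. It then applies a potentially-semi-stable modularity lifting theorem (Theorem~\ref{theorem:semistable}, built on~\cite{Thorne} and Lemma~\ref{lemma:conrad}) to the $9$-dimensional tensor product $\vrho=\Sym^2(\rho)\otimes\Sym^2(\rho_h)$, using $\rho(\Pi)=\rho_\pi\otimes\Sym^2(\rho_g)$ as the automorphic anchor; the needed local connectivity at $v|p$ then comes for free because one factor is ordinary (handled by Geraghty's Lemma~3.4.3) and the other is matched by construction via Proposition~\ref{prop:kisin}, with no appeal to potential diagonalizability of $\rho|D_p$. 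The contradiction is extracted not by Gelbart--Jacquet descent plus a determinant computation, but by computing $\mathrm{Trace}\,\vrho(c_v)=+3$ and invoking the Taylor--Patrikis theorem~\cite{TP} that odd-dimensional irreducible RAESDC Galois representations have trace of complex conjugation equal to $\pm1$. Your descent argument via the symmetric square and determinants would indeed finish the job if you had $\ad^0\rho$ potentially automorphic, but you do not, and the $9$-dimensional detour is what makes the argument go through.

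Finally, note one cosmetic correction: in the even case, $\det\rho(c)=+1$ forces $\rho(c)$ to be $\pm\mathrm{Id}$ (a scalar), which is what the paper uses; the paper does not argue via $\det\rho_\sigma(c)=-1$ but via $\Sym^2(\rho)(c)=\mathrm{Id}$ having trace $3$ and $\Sym^2(\rho_h)(c)$ having trace $1$.
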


Taking into account the work of Colmez~\cite{Colmez} and Emerton~\cite{EmertonFM}, this follows
directly from the main result of Kisin~\cite{KisinFM} when $\rho$ is odd.
Thus, it suffices to assume that $\rho$ is even and derive a contradiction.
As in~\cite{CalegariFM}, the main idea is to use \emph{potential automorphy}
to construct from $\rho$ a  RAESDC automorphic representation $\pi$ for $\GL(n)$ over some totally real
field $F$ whose existence is incompatible with the evenness of $\rho$.
It was noted in~\cite{CalegariFM} that improved automorphy lifting theorems would lead
to an improvement in the main results of that paper.  
Using the recent work of Barnet--Lamb, Gee, Geraghty, and
Taylor~\cite{BLGGT},  it is a simple
matter to deduce the main theorem of this paper if $\rho$ is a twist of a crystalline representation 
sufficiently deep in the 
Fontaine--Laffaille range (explicitly, if \emph{twice} the difference of the Hodge--Tate weights is
at most $p - 2$).  However, if one wants to apply the main automorphy lifting theorem
(Theorem~4.2.1) of~\cite{BLGGT} more generally,
then (at the very least) one has to assume that $\rho|D_p$ is potentially crystalline. Even under this
assumption,
 one runs into the difficulty of showing that
$\rho|D_p$ is \emph{potentially diagonalizable} (in the notation of that paper) which seems out of reach
at present. Instead, we use an idea we learnt from Gee (which is also 
crucially used in~\cite{BLGGT, BLGG,BLGGtwo}) of tensoring
together certain ``shadow'' representations in order to manoeuvre ourselves into a situation
in which we can show a certain representation (which we would like to prove is automorphic)
 lies on the same component (of a particular local deformation ring) as an automorphic representation.
 In~\cite{BLGGT,BLGG}, it is important that one restricts,
 following the idea of M.~Harris,  to tensoring with representations induced from
 characters, since then one is still able to prove the modularity of the original representation.
 In contrast, we shall need to tensor together representations with large image. Ultimately, we construct
 (from $\rho$) a regular algebraic self dual automorphic representation for $\GL(9)$ over a totally real field $E^{+}$
 with a corresponding $p$-adic Galois representation $\vrho: G_{E^{+}} \rightarrow \GL_9(\Qbar_p)$.
 If $\rho$ is even, then (by construction) it will the case that
 $\mathrm{Trace}(\vrho(c)) = +3$ for any complex conjugation $c$.
 This contradicts the main theorem of~\cite{TP}, and thus $\rho$ must be odd.
  In order to understand the local deformation rings that arise,  and in order to construct an
appropriate shadow representation, 
we shall have to use the full strength of the results of Kisin~\cite{KisinFM} for
totally real fields in which $p$ splits completely. This is the reason why condition~$3$ of
Theorem~\ref{theorem:even} is required, even when $\rho$ is even.

\medskip

It will be convenient to prove the following,  which, in light of
the main theorem of~\cite{KisinFM}, implies Theorem~\ref{theorem:even}.
Recall that $\epsp$ denotes the mod-$p$ cyclotomic character.

\begin{theorem} Let $F^{+}$ be a totally real  \label{theorem:even2}
field in which $p$ splits completely.
Let $\rho: G_{F^+} \rightarrow \GL_2(\Qbar_p)$ be a continuous  Galois
representation unramified except at a finite number of primes. Suppose that $p > 7$, and,
furthermore, that
\begin{enumerate}
\item $\rho |D_v$ is potentially semi-stable, with distinct Hodge--Tate weights, for all $v|p$.
\item The representation $\Sym^2 \rhobar |_{G_{F^{+}(\zeta_p)}}$ is irreducible.
\item If $v|p$, then $\rhobar|D_v$  is  
not a twist of a representation of the form
$\displaystyle{ \left( \begin{matrix}\epsp & * \\ 0 & 1  \end{matrix} \right)}$.
\end{enumerate}
Then, for every real place of $F^{+}$, $\rho$ is odd. 
\end{theorem}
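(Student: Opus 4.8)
The plan is to argue by contradiction. In view of the main theorem of \cite{KisinFM} together with the results of Colmez \cite{Colmez} and Emerton \cite{EmertonFM}, if $\rho$ were odd at some real place it would be modular; so it suffices to suppose that $\rho$ is even at some real place $v_\infty$ of $F^{+}$ and derive a contradiction. Fix a complex conjugation $c\in G_{F^{+}}$ lying above $v_\infty$. Then $\rho(c)$ is scalar, so $\Sym^2\rho(c)=\mathrm{Id}_3$ and $\mathrm{Trace}(\Sym^2\rho(c))=3$. The three–dimensional representation $\Sym^2\rho$ is, up to twist, self–dual; by hypothesis (2) its residual representation $\Sym^2\rhobar$ remains irreducible on restriction to $G_{F^{+}(\zeta_p)}$, and by hypothesis (1) it is potentially semistable with distinct Hodge--Tate weights at every $v\mid p$. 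The obstruction, flagged in the introduction, is that we cannot show $\Sym^2\rho|D_v$ is potentially diagonalizable, so we cannot feed $\Sym^2\rho$ directly into the automorphy lifting theorems.

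To circumvent this I would use Gee's ``shadow representation'' device. First, using a potential automorphy theorem (the Dwork family and Hilbert modular constructions as packaged in \cite{BLGGT}), pass to a totally real field $E^{+}$ in which $p$ splits completely and over which one can manufacture an automorphic, \emph{potentially diagonalizable at $p$}, three–dimensional Galois representation $\sigma$ with large image --- for instance the symmetric square of a suitably chosen Hilbert modular representation --- whose Hodge--Tate weights are arranged so that $\vrho:=\Sym^2\rho|_{G_{E^{+}}}\otimes\sigma$ is regular, whose multiplier matches up $\vrho$ as a polarized (essentially self–dual, totally odd) nine–dimensional representation of $G_{E^{+}}$, and whose reduction $\bar\sigma$ makes $\vrhobar=\Sym^2\rhobar\otimes\bar\sigma$ residually automorphic. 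The image conditions --- hypothesis (2) for the $\Sym^2\rhobar$ factor, large image for $\bar\sigma$ --- are what ensure that $\vrhobar|_{G_{E^{+}(\zeta_p)}}$ is adequate, so that the Taylor--Wiles--Kisin patching hypotheses are satisfied.

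The heart of the matter is the local condition at $p$: rather than proving $\vrho|D_v$ potentially diagonalizable, I would show that $\vrho|D_v$ lies on the same irreducible component of the relevant framed potentially semistable local deformation ring $\Rboxtv$ as the automorphic point coming from the shadow construction. This is precisely where one needs $p$ to split completely in $F^{+}$ (so that $F^{+}_v=\Q_p$ and $\rho|D_v$ is a two–dimensional representation of $G_{\Q_p}$), where one invokes the full strength of Kisin's analysis of two–dimensional potentially semistable deformation rings over $\Q_p$ from \cite{KisinFM}, and where hypothesis (3) --- excluding the twists of the indicated upper–triangular mod-$p$ form --- is used, since in that exceptional case the local deformation ring is not sufficiently well understood. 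Granting the component–matching, the automorphy lifting theorem of \cite{BLGGT} (Theorem~4.2.1) applies and shows that $\vrho$ is automorphic, i.e.\ corresponds to a RAESDC automorphic representation of $\GL_9/E^{+}$.

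Finally I would extract the contradiction. Since $\vrho$ is automorphic, self–dual of odd dimension $9$, and $E^{+}$ is totally real, the main theorem of \cite{TP} forces $\mathrm{Trace}(\vrho(c))\in\{-1,+1\}$. But by construction $\vrho(c)=\Sym^2\rho(c)\otimes\sigma(c)=\mathrm{Id}_3\otimes\sigma(c)$, and $\sigma$ is arranged so that $\mathrm{Trace}(\sigma(c))=1$, whence $\mathrm{Trace}(\vrho(c))=3$; since $p>7$ we have $3\not\equiv\pm1\bmod p$, so this is a genuine contradiction (indeed already at the residual level). Hence $\rho$ cannot be even at any real place, which proves Theorem~\ref{theorem:even2}. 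I expect the main obstacle to be the construction of the shadow $\sigma$ with all of the required properties holding simultaneously --- large image, the precise Hodge--Tate weights and polarization making $\vrho$ a legitimate input, potential diagonalizability at $p$, and residual automorphy of the tensor --- together with the verification that $\vrho|D_v$ and the automorphic point share an irreducible component of $\Rboxtv$.
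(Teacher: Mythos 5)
Your broad strategy is right, and matches the paper's: reduce to the even case, tensor $\Sym^2\rho$ with a ``shadow'' to form a nine--dimensional representation $\vrho$, use component--matching in local deformation rings (rather than potential diagonalizability) to make an automorphy lifting theorem apply, and then contradict the theorem of Taylor--Patrikis \cite{TP} via $\mathrm{Trace}(\vrho(c))=3$. You also correctly locate where hypotheses (2) and (3) enter (adequacy for the lifting theorem; Kisin's faithfulness of $M_{\infty}$ over the local deformation ring at $v\mid p$).

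However, there is a genuine gap at the heart of your construction, which is exactly where the paper's real work lies. You want a \emph{single} automorphic three--dimensional $\sigma$ such that simultaneously (i) $\vrhobar=\Sym^2\rhobar\otimes\bar\sigma$ is residually automorphic, and (ii) $\vrho|D_v$ lies on the same component of the relevant local deformation ring as the automorphic point. Neither of these is achievable with one shadow. The paper instead builds \emph{two pairs} of shadows and uses a crossed pairing: a Hilbert modular $g$ whose local $p$--adic representation lands on the same component as $\rho|D_v$ (via the faithfulness of $M_\infty$ in \cite{KisinFM}, which is where hypothesis (3) is used), a Hilbert modular $h$ with $\rhobar_h\simeq\rhobar_g$ which is ordinary crystalline, and a $\GL_3$-automorphic $\pi$ which is ordinary crystalline with $\rhobar_\pi\simeq\Sym^2\rhobar$. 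Then one sets $\vrho=\Sym^2\rho\otimes\Sym^2\rho_h$ and produces (by potential automorphy of tensor products) an automorphic $\Pi$ with Galois representation $\Sym^2\rho_g\otimes\rho_\pi$. The residual identity $\Sym^2\rhobar\otimes\Sym^2\rhobar_h=\rhobar_\pi\otimes\Sym^2\rhobar_g$ gives residual automorphy for free, and the local component--matching is done \emph{factor by factor}: $\Sym^2\rho_g|D_v\zap\Sym^2\rho|D_v$ (Kisin shadow, using very-smoothness in the non-crystalline case) and $\rho_\pi|D_v\sim\Sym^2\rho_h|D_v$ (both ordinary crystalline, via Geraghty's smoothness and connectedness of ordinary deformation rings). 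Your phrase ``whose reduction $\bar\sigma$ makes $\vrhobar$ residually automorphic'' asserts the conclusion of this crossed construction without supplying the mechanism; similarly, matching nine--dimensional local components directly is intractable without factoring through the two--and three--dimensional pieces. Without the crossed pairing the argument does not go through, so this is the principal missing idea. (A minor remark: the parenthetical ``already at the residual level'' is unnecessary — $\mathrm{Trace}(\vrho(c))$ is a rational integer and $3\ne\pm1$ outright.)
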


\begin{remark}  \emph{Under the conditions of Theorem~\ref{theorem:even2}, 
it follows that $\rho$ is potentially modular over an extension in which $p$
splits completely (see Remark~\ref{remark:oddcase}).}
\end{remark}

\medskip

In section~\ref{section:even}, we give some applications of our theorem
to universal deformation rings. In particular, we construct (unrestricted) universal deformation
rings of large dimension such that none of the corresponding Galois
representations are geometric ($=$ de Rham). For an even more concrete application, we prove the following (See Corollary~\ref{corr:better}):

\begin{theorem}
There exists a surjective even  representation $\rhobar: G_{\Q} \rightarrow \SL_2(\F_{11})$
with no geometric deformations.
\end{theorem}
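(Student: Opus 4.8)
The plan is to derive this from Theorem~\ref{theorem:even}, or rather from its contrapositive formulation: no $\rho$ satisfying the three hypotheses of Theorem~\ref{theorem:even} can be even, so any even $\rhobar$ satisfying the residual analogues of conditions (2) and (3) admits no geometric lift with distinct Hodge--Tate weights (and, once one also rules out the finite-image Tate-twist case and the case of equal Hodge--Tate weights, no geometric deformation at all). First I would exhibit an explicit surjective even representation $\rhobar: G_{\Q} \rightarrow \SL_2(\F_{11})$. Since $p = 11 > 7$, the prime satisfies the hypothesis of Theorem~\ref{theorem:even}. The image $\SL_2(\F_{11})$ is not dihedral and acts absolutely irreducibly, so condition (2) holds. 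For condition (3) one needs $\rhobar|D_{11}$ to not be a twist of an upper-triangular representation with $\epsp$ on the diagonal; I would arrange this by choosing $\rhobar$ ramified at $11$ in a controlled way --- for instance, having $\rhobar|D_{11}$ itself be (the restriction of) an irreducible representation, or at worst reducible but non-split with the wrong character on the diagonal. A convenient source of such $\rhobar$ is the mod-$11$ representation attached to a suitable classical modular form, or a representation cut out by an explicitly chosen $\SL_2(\F_{11})$-extension of $\Q$; evenness is the condition that complex conjugation maps to $\pm I$, equivalently has trivial image in $\PSL_2(\F_{11})$.

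Granting such a $\rhobar$, the argument is then essentially formal. Let $\rho$ be any deformation of $\rhobar$ to $\GL_2(\Qbar_{11})$ (after extending scalars) that is geometric, i.e.\ unramified outside a finite set of primes and de Rham --- hence potentially semi-stable --- at $11$. If the two Hodge--Tate weights of $\rho|D_{11}$ are distinct, then $\rho$ satisfies all three hypotheses of Theorem~\ref{theorem:even}, so $\rho$ is modular; but $\rhobar$ is even and $11 > 2$, so the modular lift would force $\rhobar$ to be odd, a contradiction. If instead the Hodge--Tate weights coincide, then after a twist $\rho$ has finite image (a geometric representation with equal Hodge--Tate weights is, up to twist, Artin --- this is the standard consequence of de Rham-ness plus the finiteness criterion, or one may simply incorporate this into the statement of Corollary~\ref{corr:better}); but a finite-image lift of an $\SL_2(\F_{11})$-representation again cannot be even by the usual parity constraint on the determinant together with the classification of finite subgroups. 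Either way we reach a contradiction, so $\rhobar$ has no geometric deformation.

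The main obstacle is the explicit construction of $\rhobar$ meeting condition (3): one must produce a surjective even $\SL_2(\F_{11})$-representation whose local behaviour at $11$ is \emph{not} a twist of $\left(\begin{smallmatrix}\epsp & * \\ 0 & 1\end{smallmatrix}\right)$. (That condition (3) is automatically incompatible with $\rhobar$ being unramified at $11$ is false --- an unramified $\rhobar|D_{11}$ is a twist of $\left(\begin{smallmatrix}1 & 0 \\ 0 & 1\end{smallmatrix}\right)$, not of the $\epsp$-shape, so in fact an \emph{unramified-at-}$11$ choice would do --- but one must still check $\rhobar$ can be surjective onto $\SL_2(\F_{11})$ and even while being unramified at $11$, which is a matter of finding the right $\SL_2(\F_{11})$-number field.) Concretely I would search among $\SL_2(\F_{11})$-extensions of $\Q$ ramified only at primes $\ne 11$ with the real place(s) split, or invoke known tables of such extensions (e.g.\ those arising from modular forms of level prime to $11$ with the right mod-$11$ image); verifying that the resulting $\rhobar$ genuinely surjects onto $\SL_2(\F_{11})$ (not merely $\PSL_2$) and has the claimed evenness is the only genuinely computational point. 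Everything else is a direct appeal to Theorem~\ref{theorem:even} together with the elementary observation that an even residual representation into $\SL_2(\F_{11})$ cannot become odd or Artin-and-even upon deformation.
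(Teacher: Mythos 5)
Your overall plan --- rule out geometric lifts with distinct Hodge--Tate weights by Theorem~\ref{theorem:even}, and deal separately with the case of equal Hodge--Tate weights --- is the right shape, but the second half contains a genuine gap, and that gap is precisely what dictates the structure of the paper's construction (which you have inverted).

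You dispose of the equal Hodge--Tate weight case by asserting that a geometric two-dimensional representation with equal Hodge--Tate weights is Artin up to twist, calling this ``the standard consequence of de Rham-ness plus the finiteness criterion.'' What follows from Sen's theorem is only a \emph{local} statement: de Rham with Hodge--Tate weights all zero implies $\rho|_{I_p}$ has finite image. The \emph{global} assertion that a finitely ramified representation with this local property has finite image is a form of the Fontaine--Mazur conjecture; for odd representations it follows from Serre's conjecture, but for even representations (which is exactly the case at hand) it is open. So you cannot invoke it. (Your back-up suggestion of ``incorporating this into the statement'' simply changes the theorem.) Moreover, your justification --- that a finite-image lift ``cannot be even by the usual parity constraint'' --- is not the right argument even granting finiteness: there is no parity obstruction to even Artin representations. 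The correct obstruction would be that a finite subgroup of $\PGL_2(\C)$ is cyclic, dihedral, $A_4$, $S_4$, or $A_5$, none of which can reduce mod $11$ to $\PSL_2(\F_{11})$.

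The paper sidesteps the conjectural global statement entirely by using only the \emph{local} consequence of Sen's theorem, and this is why the ramification of $\rhobar$ at $11$ is essential rather than optional. Theorem~\ref{theorem:empty} requires $\rhobar|_{I_v}$ to be (up to twist) of the form $\left(\begin{smallmatrix}\psi_v & * \\ 0 & 1\end{smallmatrix}\right)$ with $\psi_v$ of order $> 2$ and $* \ne 0$: then $\rhobar|_{I_v}$ has nonabelian, non-dihedral projective image of order divisible by $p$, which cannot be a reduction of any finite subgroup of $\PGL_2(\Qbar_p)$. That gives a purely local contradiction in the equal-weight case. Your proposal to take $\rhobar$ \emph{unramified} at $11$ --- on the grounds that condition (3) of Theorem~\ref{theorem:even} is then trivially satisfied --- would make $\rhobar|_{I_{11}}$ trivial, destroy this local obstruction, and leave you with only the conjectural global statement to fall back on. So the unramified route is precisely the one that does \emph{not} work.

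Two smaller points. First, you suggest sourcing $\rhobar$ from mod-$11$ representations of modular forms; but those are odd with nontrivial (cyclotomic) determinant, so they give neither even representations nor representations into $\SL_2(\F_{11})$. Second, the construction of the representation is a substantive part of the result and is not addressed in your sketch: the paper produces an explicit degree-$11$ polynomial (from Malle's family) cutting out a totally real $\PSL_2(\F_{11})$-extension totally ramified at $11$, proves via a discriminant computation (Lemma~\ref{lemma:biglocal}) that inertia at $11$ is the full Borel of order $55$, and then lifts to $\SL_2(\F_{11})$ using a theorem of B\"oge on solvable embedding problems, twisting if necessary to keep the extension totally real.
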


Contrast this result with Corollary~1(a) of Ramakrishna~\cite{R}, which implies that $\rhobar$
\emph{does} admit a deformation to a surjective representation 
$\rhobar: G_{\Q} \rightarrow \SL_2(\Z_{11})$ unramified outside finitely many primes.

\medskip

\begin{remark} A word on notation. \emph{There are only finitely many 
letters that can plausibly be used to denote a global field, and thus, throughout the text, we
have resorted to using subscripts. In order to prepare the reader, we note now the
existence in the text of a sequence of inclusions of totally real fields:
$$F^{+} \subseteq F^{+}_1 \subseteq F^{+}_2 \subseteq F^{+}_3 \subseteq
F^{+}_4 \subseteq F^{+}_5 \subseteq F^{+}_6 \subseteq F^{+}_7,$$
and corresponding degree two CM extensions $F^{}_3 \subseteq  \ldots \subseteq  F^{}_7$.
The subscript implicitly records (except for one instance) the number of times a theorem of Moret-Bailly
(Theorem~\ref{theorem:MB}) is applied. (This is not literally true, since many of the
references we invoke also appeal to variations of this theorem.)}
\end{remark}

As usual,  the abbreviations RAESDC  and RACSDC for an automorphic representation $\pi$ for
$\GL(n)$ stand for regular, algebraic, essentially-self-dual, and cuspidal; and
regular, algebraic, conjugate-self-dual, and cuspidal, respectively.

\medskip

\noindent  \bf Acknowledgements. \rm I would like to thank 
 Matthew Emerton and Richard Taylor for useful conversations,
 Jordan Ellenberg for a discussion about the inverse Galois problem, Toby Gee for explaining some
details of the proof of Theorem~\lifting of~\cite{BLGGT}, for keeping me informed of changes between the
first and subsequent versions of~\cite{BLGGT}, and for pointing out how 
Proposition~\ref{prop:res2} could be deduced immediately from~\cite{snow} (circumventing the previous slightly more
circuitous argument),
Mark Kisin for explaining  how his results in~\cite{KisinFM} could be used to deduce that every component of a certain
local deformation ring contained a global point,
  Brian Conrad for help in proving Lemma~\ref{lemma:conrad} and
  discussions regarding the material of Section~\ref{section:general}, and
  Florian Herzig for conversations about adequateness and the cohomology of
  Chevalley groups. Finally, I would like to thank the referee for a very thorough   reading
  of the manuscript and for many helpful comments.

\section{Local Deformation Rings}
Let $E$ be a finite extension of $\Q_p$ (the coefficient field), and let $V$ be a $d$-dimensional
vector space over $E$ with a continuous action of $G_{K}$, where
$K/\Q_p$ is a finite extension. 
Let us suppose that $V$ is potentially semi-stable~\cite{Fontaine}. 
Let
$$\tau: I_{K} \rightarrow \GL_d(\Qbar_p).$$
be a continuous representation of the inertia subgroup of $K$.
Fix an embedding $K \hookrightarrow \Qbar_p$. 
Attached to $V$ is a $d$-dimensional representation of the Weil--Deligne group
of $K$. 
If the restriction of this representation to the  inertia subgroup is equivalent
to $\tau$, we say that $V$ is of \emph{type $\tau$}.
Also associated to $V$ is a $p$-adic Hodge type $\v$, which records the
breaks in the Hodge filtration associated to $V$ considered as a de Rham
representation (cf.~\cite{Kisindef}, \S2.6).
Let $\F$ be a finite field of characteristic $p$, and let us now fix a representation
$$\rhobar: G_{K} \rightarrow \GL_n(\F).$$
Let $\Rbox_{\rhobar}$ be the universal framed deformation ring of $\rhobar$ considered as a $W(\F)$-algebra.
The following theorem is a result of Kisin (see~\cite{Kisindef}, Theorem 2.7.6).

\begin{theorem}[Kisin] There exists 
a quotient
$\Rboxtv_{\rhobar}$ of $\Rbox_{\rhobar}$ such that
the $\Qbar_p$-points of  the scheme $\Spec(\Rboxtv_{\rhobar}[1/p])$ are
exactly the $\Qbar_p$ points of $\Spec(\Rbox_{\rhobar})$ that
are potentially semi-stable of type $\tau$ 
and Hodge type
$\v$. It is unique if it is assumed to
be reduced and p-torsion free.
\end{theorem}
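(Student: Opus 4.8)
The plan is to reduce everything to a single closedness statement in the generic fibre; the uniqueness clause then comes almost for free and also dictates what the ring must be. First I would dispose of uniqueness. The ring $\Rbox_{\rhobar}$ is complete local Noetherian, so $\Rbox_{\rhobar}[1/p]$ is Jacobson with closed points exactly its $\Qbar_p$-points, each having residue field finite over $\Q_p$. If $R_1$ and $R_2$ are two quotients of $\Rbox_{\rhobar}$ that are reduced, $p$-torsion free, and have the prescribed $\Qbar_p$-points, then $p$-torsion freeness makes $\Spec R_i[1/p]$ schematically dense in $\Spec R_i$, while $\Spec R_1[1/p]$ and $\Spec R_2[1/p]$ are reduced closed subschemes of $\Spec \Rbox_{\rhobar}[1/p]$ with the same closed points, hence coincide; taking scheme-theoretic closures in $\Spec \Rbox_{\rhobar}$ and again invoking reducedness and $p$-torsion freeness gives $R_1 = R_2$. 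The same bookkeeping shows how to \emph{build} the ring: if the potentially semistable $\Qbar_p$-points of type $\tau$ and Hodge type $\v$ are exactly the closed points of a Zariski-closed reduced subscheme $Z = V(J) \subseteq \Spec\Rbox_{\rhobar}[1/p]$, then $\Rboxtv_{\rhobar} := \Rbox_{\rhobar}/(\Rbox_{\rhobar}\cap J)$ embeds into $\Rbox_{\rhobar}[1/p]/J$, so it is reduced and $p$-torsion free, with $\Spec\Rboxtv_{\rhobar}[1/p] = Z$ and hence the required $\Qbar_p$-points. So it all comes down to: the locus of $\Qbar_p$-points that are potentially semistable of type $(\tau,\v)$ is Zariski closed in $\Spec\Rbox_{\rhobar}[1/p]$.

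To prove that, I would go through Fontaine's classification. Fix a finite Galois extension $L/K$ with $\tau|_{I_L}$ trivial. By the $p$-adic monodromy theorem, a de Rham representation $V$ of $G_K$ on a finite-dimensional $\Qbar_p$-vector space is potentially semistable of type $\tau$ and Hodge type $\v$ exactly when $D := (B_{\mathrm{st}} \otimes V)^{G_L}$ is free of the prescribed rank over $L_0\otimes_{\Q_p}\Qbar_p$, equipped with its Frobenius $\varphi$, monodromy $N$ and semilinear $\Gal(L/K)$-action and with a $\Gal(L/K)$-stable filtration on $D_L := L\otimes_{L_0} D$, in such a way that the associated Weil--Deligne representation has inertial type $\tau$ and the jumps of $\mathrm{Fil}^\bullet D_L$ are those recorded by $\v$; and by the theorem of Colmez and Fontaine that weakly admissible filtered $(\varphi,N,\Gal(L/K))$-module with $\Qbar_p$-coefficients is the same data as $V$ itself. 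I would therefore set up the moduli problem, over $\Rbox_{\rhobar}$-algebras, of such filtered $(\varphi,N,\Gal(L/K))$-modules of type $(\tau,\v)$, rigidified by an identification of the associated Galois representation with the universal framed deformation $\rho^{\Box}$, and realize it as a scheme $\X^{\tau,\v}_{\rhobar}$ that is \emph{projective} over $\Spec\Rbox_{\rhobar}$: following Kisin's theory of Breuil--Kisin modules and $F$-crystals, one presents it as a closed subscheme of a relative Grassmannian of $\mathfrak{S}$-lattices carrying descent data for $\Gal(L/K)$ and a bound on height encoding the Hodge type $\v$, with $N$ imposed as an extra condition. The filtration of type $\v$ contributes the partial-flag directions (projective), the height bound makes the lattice part proper rather than merely affine, and weak admissibility together with compatibility with $\rho^{\Box}$ is a locally closed condition.

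Granting this, the projection $\X^{\tau,\v}_{\rhobar}\to\Spec\Rbox_{\rhobar}$ is proper, hence so is its generic fibre $\X^{\tau,\v}_{\rhobar}[1/p]\to\Spec\Rbox_{\rhobar}[1/p]$, whose image is therefore Zariski closed; I would then show this map is a closed immersion onto the locus in question. It is injective on $\Qbar_p$-points, since $D_{\mathrm{pst}}(V)$ together with its comparison isomorphism is uniquely determined by $V$ and we have rigidified by the framing; its image is precisely the potentially-semistable-of-type-$(\tau,\v)$ locus by the dictionary above; and a tangent-space (equivalently complete-local-ring) argument promotes this set-theoretic statement to a closed immersion. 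That realizes the locus as a Zariski-closed reduced subscheme $Z = V(J)$, and the recipe $\Rboxtv_{\rhobar} = \Rbox_{\rhobar}/(\Rbox_{\rhobar}\cap J)$ of the first paragraph then produces the ring, with the uniqueness clause pinning it down unambiguously.

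The main obstacle — indeed essentially the whole of the work — is the relative $p$-adic Hodge theory that the last two paragraphs quietly assume: the functor $V\mapsto D_{\mathrm{pst}}(V)$ is defined only pointwise and does not commute with base change, so one cannot just apply $(B_{\mathrm{st}}\otimes\cdot)^{G_L}$ to a family, and the filtered $(\varphi,N,\Gal(L/K))$-module (equivalently the Breuil--Kisin module) must be built \emph{in families}. For this one works over a suitable affinoid cover of the rigid generic fibre of $\Rbox_{\rhobar}$, where the de Rham points form a Zariski-closed subset over which these objects glue to coherent sheaves with all their structure — using the work of Berger and Colmez on families of de Rham representations, together with Kisin's full-faithfulness theorems identifying which Breuil--Kisin modules arise from which Galois representations — and one must further verify that $\X^{\tau,\v}_{\rhobar}$ is genuinely a projective $\Rbox_{\rhobar}$-scheme whose generic fibre recovers exactly the prescribed locus. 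Once that comparison is secured, the passage to the integral quotient $\Rboxtv_{\rhobar}$ and the check of its properties are just the formal manipulations with scheme-theoretic closures described above.
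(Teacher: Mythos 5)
The paper gives no proof of this statement; it simply cites Theorem 2.7.6 of Kisin's \emph{Potentially semi-stable deformation rings} (the reference \texttt{Kisindef}). So there is no in-paper argument to compare against, and the right question is whether your sketch is a faithful reconstruction of Kisin's strategy. Broadly it is: your formal uniqueness argument (Jacobson property of $\Rbox_{\rhobar}[1/p]$, scheme-theoretic closure of a reduced $p$-torsion-free quotient) is exactly Kisin's; and the reduction of existence to Zariski-closedness of the potentially semi-stable locus in $\Spec\Rbox_{\rhobar}[1/p]$, proved by exhibiting a \emph{projective} $\Rbox_{\rhobar}$-scheme whose generic fibre maps onto that locus, is precisely the architecture of Kisin's paper. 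You also correctly identify where the substance lies — the functor $D_{\mathrm{pst}}$ is only defined pointwise, and the real content is the integral theory of $\mathfrak{S}$-modules of bounded $E(u)$-height with descent data, together with the Berger--Colmez theory of families, which lets one check the condition on the whole generic fibre at once rather than one closed point at a time.

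Two calibrations worth flagging. First, your sketch pivots a bit too quickly to the filtered $(\varphi,N,\Gal(L/K))$-module picture as the moduli problem; that is the correct generic-fibre dictionary via Colmez--Fontaine, but projectivity in Kisin's proof comes from the \emph{integral} side (lattices of bounded height in $\mathfrak{S}$-modules form a closed subscheme of an affine Grassmannian cut out by a determinant condition, hence projective), not from the partial-flag variety of filtrations, which by itself would give you a projective fibration over the wrong base. Second, you claim the map from the moduli scheme to $\Spec\Rbox_{\rhobar}[1/p]$ is a closed immersion; this is more than you need and more than is immediate (injectivity on $\Qbar_p$-points via full-faithfulness of the Kisin-module functor is not the same as being a monomorphism of schemes). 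Properness of the map already gives a closed image, which is all the construction of $\Rboxtv_{\rhobar}$ requires, and indeed Kisin works with the closed image rather than asserting an isomorphism onto it. These are corrections of emphasis rather than of plan; the plan is the right one.
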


Note that restricting $\rhobar$ to some finite index subgroup $G_{L}$ induces
a functorial map of corresponding local deformation rings:
$$\Spec(\Rboxtv_{\rhobar}[1/p]) \rightarrow \Spec(\Rboxtv_{\kern+0.05em{\rhobar|G_L}}[1/p]),$$
where, by abuse of notation, $\tau$ in the second ring denotes the restriction of $\tau$ to $I_L$
(and correspondingly with $\mathbf{v}$).
We use $\mathds{1}$ to denote the trivial type.

\begin{df} A point of $\Spec(\Rboxtv_{\rhobar}[1/p])$ is \emph{very smooth} if it defines a smooth
point on $\Spec(\Rboxtv_{\kern+0.05em{\rhobar|G_L}}[1/p])$ for every finite extension $L/K$.
\end{df}

In sections~\S\lifta and ~\S\liftb of~\cite{BLGGT}, various notions of equivalence are defined
between representations. We would like to define a mild (obvious) extension of these definitions
when $v|p$.
Suppose that $\rho_1$ and $\rho_2$ are two continuous $d$-dimensional 
representations of $G_K$ with coefficients in some finite extension $E$ over $\Q_p$.
Let $\OL$ denote the ring of integers of $E$. Let us
 assume that $\rho_1$ and $\rho_2$ come with a specific integral structure, i.e., a given
$G_K$-invariant $\OL$-lattice. Equivalently, we may suppose that
$\rho_1$ and $\rho_2$ are representations $G_K \rightarrow \GL_d(\OL)$.
In particular, the mod-$p$ reductions $\rhobar_1$ and $\rhobar_2$ are well defined.
Such extra structure arises, for example, if the representations $\rho_i$ are the local representations
 attached to global representations whose mod-$p$ reductions are absolutely irreducible.

\begin{df}  Suppose $\rho_1$ and $\rho_2$ are two continuous $G_K$-representations
with given integral structure.  
  If $\rho_1$ and $\rho_2$ are potentially
 semi-stable, we say that $\rho_1 \zap \rho_2$ $($respectively, 
  $\rho_1 \rightsquigarrow \rho_2)$ if  $\rhobar:=\rhobar_1 \simeq \rhobar_2$, the representations
$\rho_1$ and $\rho_2$ have the same type $\tau$, the same Hodge type $\v$, and lie on the same irreducible
component of  $\Spec(\Rboxtv_{\rhobar}[1/p])$, and, furthermore, that $\rho_1$ corresponds to a 
very smooth point of
$\Spec(\Rboxtv_{\rhobar}[1/p])$ (respectively, smooth point
of $\Spec(\Rboxtv_{\rhobar}[1/p]))$.
\end{df}

\begin{remark} \emph{If $\rho_1 \zap \rho_2$, we say (following~\cite{BLGGT},~\S1.3, \S1.4) that $\rho_1$
\emph{very strongly connects} to $\rho_2$ (or $\rho_1$ ``zap'' $\rho_2$). (If $ \rho_1 \rightsquigarrow \rho_2$, then
$\rho_1$ \emph{strongly connects} to $\rho_2$, or $\rho_1$ ``squig''
 $\rho_2$.) If  $\rho_1 \zap \rho_2$, then clearly
 $\rho_1 \rightsquigarrow \rho_2$, and moreover  $\rho_1 |_{G_{L}} \zap \rho_2 |_{G_{L}}$ 
 (and hence $\rho_1 |_{G_{L}} \rightsquigarrow  \rho_2 |_{G_{L}}$) for any finite extension
  $L/K$.}
  \end{remark}
 
 \begin{remark} \emph{If $\rho_1$ and $\rho_2$ are both potentially crystalline representations,
then one may also consider the ring
$R^{\Box,\tau,\v,cr}_{\rhobar}$ parametrizing representations which are potentially
crystalline (cf.~\cite{Kisindef}). One may subsequently define the notions $\zap$ and
$\rightsquigarrow$ relative to this ring.  Since $\Spec(R^{\Box,\tau,\v,cr}_{\rhobar}[1/p])$ is smooth
(\emph{ibid.}),  the
relationships
$\rho_1 \zap \rho_2$ and $\rho_1 \rightsquigarrow \rho_2$ are  symmetric, and one  
may simply write
$\rho_1 \sim \rho_2$ (``$\rho_1$ connects to $\rho_2$'', cf.~\cite{BLGGT}). The scheme
$\Spec(R^{\Box,\tau,\v}_{\rhobar}[1/p])$ is not in general smooth,  so
we must impose strong connectedness in the potentially semi-stable case in order for the arguments 
of~\cite{BLGGT} to apply.
In this paper, whenever we have  $p$-adic representations $\rho_1$ and $\rho_2$, we shall
write $\rho_1 \sim \rho_2$ only when $\rho_1$ and $\rho_2$ are potentially crystalline, and
by writing this we mean that they are connected relative to 
$\Spec(R^{\Box,\tau,\v,cr}_{\rhobar}[1/p])$.}
\end{remark}

\medskip

In order to deduce in any particular circumstance 
that $\rho_1 \zap \rho_2$, it will be useful to have
some sort of criteria to determine when $\rho_1$ corresponds to a very smooth point.
If 
 $V$ is a potentially semi-stable representation of $G_K$,
 let $D = D_{\mathrm{pst}}(V)$ denote the corresponding weakly 
 admissible $(\varphi,N,\Gal(\overline{K}/K))$-module. Let
  $$D(k) \subset \Qbar_p \otimes_{\Q^{nr}_p} D$$
  denote the subspace generated by the (generalized) eigenvectors
  of Frobenius of slope $k$. 
  Since $N \varphi = p \varphi N$, there is a natural map
  $N: D(k+1) \rightarrow D(k)$.
  
\begin{lemma} \label{lemma:explicit} Let $V$ be a potentially semi-stable representation of $G_K$ of type $\tau$
and Hodge type $\v$.   \label{lemma:smooth}
 Suppose that $N: D(k+1) \rightarrow D(k)$
 is an isomorphism whenever the target and source are non-zero.
 Then $V$ is a very smooth point on  $\Spec(\Rboxtv_{\rhobar}[1/p])$.
 \end{lemma}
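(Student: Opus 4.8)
The plan is to reduce the statement to a computation of the dimension of a tangent space, using Kisin's description of the local deformation ring via the resolution by the moduli space of filtered $(\varphi, N, \Gal(\overline{K}/K))$-modules. Recall (cf.~\cite{Kisindef}, \S2.7) that $\Spec(\Rboxtv_{\rhobar}[1/p])$ receives a projective, generically an isomorphism onto its image, map from a scheme $\Theta^{\tau,\v}$ parametrizing a framed weakly admissible module together with a compatible filtration; the fibres of $\Theta^{\tau,\v}$ over a point of the Weil--Deligne moduli space are (open subschemes of) partial flag varieties, hence smooth. So it suffices to show that (a) the point $V$ lies in the locus where the resolution is an isomorphism, and (b) $\Theta^{\tau,\v}$ is smooth at the point lying over $V$, and that these two facts persist after restriction to $G_L$ for every finite extension $L/K$. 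The hypothesis on $N : D(k+1) \to D(k)$ being an isomorphism whenever source and target are nonzero is manifestly stable under restriction to $I_L$ (it is a statement purely about the underlying $(\varphi,N)$-module, and restriction of $\tau$ just changes the Galois descent data, not the $\varphi, N$ action on $\Qbar_p \otimes D$), so it is enough to treat $K$ itself.

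First I would compute the dimension of $\Spec(\Rboxtv_{\rhobar}[1/p])$ at $V$. By Kisin's local-global results this ring is equidimensional of dimension $d^2 + \sum_{v \mid \infty?}$—more precisely, for $K/\Q_p$ of degree $f$ and $V$ of dimension $d$, every irreducible component of $\Spec(\Rboxtv_{\rhobar}[1/p])$ has dimension $d^2 + d(d-1)f/2 + \text{(contribution from the }\v\text{)}$; the exact formula is in~\cite{Kisindef}. Then I would compute the dimension of the tangent space $H^1_{\mathrm{g}}$-type space controlling deformations of $V$ as a framed potentially semi-stable representation of type $\tau$, Hodge type $\v$. This tangent space sits in an exact sequence involving $H^0$ and $H^1$ of the $(\varphi, N)$-complex computing $\Ext^1$ in the category of weakly admissible modules, together with the tangent space to the flag variety fibre. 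The key point is that the obstruction to smoothness is governed by $H^2$-type terms, equivalently by the cokernel of certain maps built out of $N$ and $1 - \varphi$; and the hypothesis that $N$ is an isomorphism on each graded piece where it can be forces these obstruction groups to vanish, exactly because an isomorphism has no cokernel.

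Concretely, I would argue as follows. The completed local ring at $V$ pro-represents the functor of deformations of the pair (framed weakly admissible module, filtration) with fixed $\tau$ and $\v$. Deformations of the filtration alone, with the module fixed, are unobstructed (flag variety). Deformations of the weakly admissible $(\varphi, N)$-module with fixed Galois descent datum $\tau$ are controlled by a two-term complex $[\,\mathrm{End}(D) \xrightarrow{(N\text{-twist}, \, 1-\varphi)} \mathrm{End}(D) \oplus \mathrm{End}(D)\,]$ (suitably interpreted with the $N\varphi = p\varphi N$ compatibility), and one checks that the $H^1$ of this complex—the obstruction space—is a subquotient of $\bigoplus_k \mathrm{coker}(N : \mathrm{Hom}(D(k+1), D(k)) \to \cdots)$ wedged against the $\varphi$-eigenspace decomposition. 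When $N$ restricted to each relevant graded piece is an isomorphism, this cokernel vanishes, so the obstruction space is zero and the point is smooth; the same holds after any base change $L/K$. Finally I would assemble: smooth point on $\Theta^{\tau,\v}$ plus the fact that such a $V$ (with $N$ an isomorphism on graded pieces) lies in the open locus where $\Theta^{\tau,\v} \to \Spec(\Rboxtv_{\rhobar}[1/p])$ is an isomorphism—because the non-isomorphism locus is precisely where extra filtrations become weakly admissible, which the isomorphism-of-$N$ condition rules out—gives that $V$ is smooth on $\Spec(\Rboxtv_{\rhobar}[1/p])$, and very smooth since everything was stable under restriction.

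The main obstacle, I expect, is bookkeeping the precise complex computing the tangent and obstruction spaces of the potentially semi-stable (as opposed to potentially crystalline) deformation problem—getting the $N$-compatibility and the Galois descent datum $\tau$ correctly incorporated so that the cokernel that appears is exactly $\mathrm{coker}(N: D(k+1) \to D(k))$ on the nose, rather than some less tractable derived object. A clean way around this is to reduce to the crystalline case by a twist/induction trick: after restricting to a large enough $L$ one can hope to make $N$ visible as a genuine extension, but more simply, one can deform $N$ separately and observe that the locus $\{N = 0\}$ together with its normal directions is smoothly fibered when $N$ is already as nondegenerate as possible. I would also need to be slightly careful that "non-zero source and target" is the right hypothesis—i.e., that one does not need $N$ surjective onto $D(k)$ when $D(k+1) = 0$, which is automatic, and that the monodromy weight filtration is forced to be pure, which is exactly what the hypothesis encodes.
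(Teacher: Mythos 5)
The paper's own proof is a one-line citation to the proof of Lemma~3.1.5 of Kisin's \emph{Potentially semi-stable deformation rings}, and your outline is essentially an attempt to reconstruct that argument: pass to Kisin's moduli space $\Theta^{\tau,\v}$ of filtered $(\varphi,N,\Gal(\overline{K}/K))$-modules, show smoothness there by a tangent/obstruction computation in which the hypothesis on $N$ kills the obstruction, and note that the hypothesis is visibly stable under replacing $K$ by a finite extension $L$. That last observation --- that the slope decomposition $D(k)$ and the operator $N$ on $D_{\mathrm{pst}}(V)$ are literally unchanged under restriction to $G_L$, so Kisin's smoothness criterion persists on the nose --- is the cheap but crucial point that upgrades ``smooth'' to ``very smooth,'' and you have it right.

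There are two places where the details as written do not work. First, your step (a), that the point must lie where $\Theta^{\tau,\v}\to\Spec(\Rboxtv_{\rhobar}[1/p])$ is an isomorphism, justified by ``the non-isomorphism locus is precisely where extra filtrations become weakly admissible,'' misreads the situation: by Fontaine's equivalence the map is already a bijection on $\Qbar_p$-points, so there are never competing filtrations, and the comparison of completed local rings is handled once and for all by Kisin (this is part of the content of Theorem~3.3.4 of that paper, which the present paper also quotes for the existence of a smooth dense open). Second, the two-term complex $[\,\mathrm{End}(D)\to \mathrm{End}(D)\oplus\mathrm{End}(D)\,]$ controlling deformations of the $(\varphi,N)$-module is not correct as stated, and you acknowledge as much; but this is not mere bookkeeping --- the relation $N\varphi=p\varphi N$ is \emph{bilinear} in the pair $(\varphi,N)$ being deformed, and it is precisely the linearization of this bilinear constraint, evaluated slope-by-slope against the generalized $\varphi$-eigenspace decomposition, whose cokernel one must show vanishes. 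That cokernel is $\bigoplus_k \mathrm{coker}\bigl(N\colon D(k+1)\to D(k)\bigr)$ on the relevant graded pieces, which is exactly what the hypothesis kills; but without writing this down you have not actually produced a proof, only correctly guessed its shape. Also, the closing remark that the hypothesis is ``exactly'' purity of the monodromy weight filtration is not right: the condition is on the Newton (slope) decomposition for $\varphi$, which is related to, but not the same as, purity of the monodromy filtration.
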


\begin{proof}
The explicit condition  follows from the proof of Lemma~3.1.5 of~\cite{Kisindef}.
\end{proof}

The condition of Lemma~\ref{lemma:smooth} is a (somewhat brutal) way of insisting
that the monodromy operator $N$ is as nontrivial as possible, given the action of
Frobenius. We note in passing that by Theorem~3.3.4 of~\cite{Kisindef}, $\Spec(\Rboxtv_{\rhobar}[1/p])$ admits a formally smooth dense open
subscheme.

\begin{example} \label{example:very} If $V$ is a $2$-dimensional representation
that is potentially semi-stable but not potentially crystalline, then
$\Sym^{n-1}(V)$  satisfies the conditions  of Lemma~\ref{lemma:explicit} and so is very smooth for all $n$.
\end{example}

\begin{remark} \emph{ One expects that a local
 $p$-adic representation associated to an RACDSC automorphic representation $\pi$
  is very smooth on the corresponding local deformation ring.
In fact, this would follow by the proof 
 of Lemma~\liftc of~\cite{BLGGT} if one had local--global compatibility at 
all primes (cf. Conjecture~1.1 and Theorem~1.2 of~\cite{TY}).
Since local--global compatibility is still unknown,  however, we must take more care in ensuring that the local representations associated
to automorphic representations we construct are (very) smooth.}
\end{remark}

\section{Realizing  local representations}

It will be useful in the sequel to quote the following extension of a theorem
of Moret--Bailly.

\begin{theorem} Let $E$ be a number field and let
$S$ be a finite set of places of $E$. Let $F/E$ be an auxiliary finite extension of number fields. 
Suppose that $X/E$ is a smooth geometrically connected variety. Suppose that:  \label{theorem:MB}
 For $v \in S$, $\Omega_v \subset X(E_v)$ is a non-empty open   $($for the $v$-topology$)$ subset.
Then there is a finite Galois extension $H/E$ and a point $P \in X(H)$ such that
\begin{enumerate}
\item $H/E$ is linearly disjoint from $F/E$.
\item Every place $v$ of $S$ splits completely in $H$, and if $w$ is a prime of $H$ above
$v$, there is an inclusion $P \in \Omega_v \subset X(H_w)$.
\item \label{item:descend}
Suppose that for any place $u$ of $\Q$, $S$ contains
either every place $v|u$ of $E$ or no such places.
Then one can choose $H$ to be a compositum $E M$ where:
\begin{enumerate}
\item $M/\Q$ is a totally real Galois  extension.
\item If there exists a $v \in S$ and a prime $p$ such that $v|p$, then $p$
splits completely in $M$.
\end{enumerate}
\end{enumerate}
\end{theorem}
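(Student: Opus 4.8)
The plan is to regard parts \emph{(1)} and \emph{(2)} as (essentially) the classical theorem of Moret--Bailly --- the linear disjointness in \emph{(1)} being the usual refinement, obtained by enlarging $S$ by one auxiliary place of $E$ at which one imposes a splitting condition incompatible with $F$ --- so that the only real content is the descent statement \emph{(3)}. I would deduce \emph{(3)} by applying the classical theorem \emph{over} $\mathbf{Q}$ after passing to a Weil restriction of scalars.

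So assume the hypothesis of \emph{(3)}: $S$ is the union, over some set $S_{\mathbf{Q}}$ of places of $\mathbf{Q}$, of the full fibres $\{v : v\mid u\}$, $u\in S_{\mathbf{Q}}$. Enlarging $S$ (which preserves this property) I may assume it contains all archimedean places, so that $X(E_v)\supseteq\Omega_v\neq\emptyset$ for every $v\mid\infty$ --- this nonemptiness is in any case forced by \emph{(3)(a)}. Set $Y:=\mathrm{Res}_{E/\mathbf{Q}}(X)$, a scheme ($X$ being quasi-projective). Since $E/\mathbf{Q}$ is separable and $X/E$ smooth, $Y/\mathbf{Q}$ is smooth; and since $Y\times_{\mathbf{Q}}\Qbar\cong\prod_{\sigma\colon E\hookrightarrow\Qbar}(X\times_{E,\sigma}\Qbar)$ is a finite product of connected varieties, $Y$ is geometrically connected over $\mathbf{Q}$. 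Weil restriction adjunction gives $Y(\mathbf{Q}_u)=\prod_{v\mid u}X(E_v)$ for each place $u$ of $\mathbf{Q}$, and $Y(M)=X(E\otimes_{\mathbf{Q}}M)$ for each number field $M$. For $u\in S_{\mathbf{Q}}$ set $\Omega_u:=\prod_{v\mid u}\Omega_v\subseteq Y(\mathbf{Q}_u)$, a non-empty open subset.

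Now apply the classical Moret--Bailly theorem over $\mathbf{Q}$ to $Y$, $S_{\mathbf{Q}}$, the sets $\Omega_u$, and the auxiliary extension $\widetilde{F}/\mathbf{Q}$ (a Galois closure of $F/\mathbf{Q}$, which contains $E$). This produces a finite Galois extension $M/\mathbf{Q}$, linearly disjoint from $\widetilde{F}/\mathbf{Q}$, in which every $u\in S_{\mathbf{Q}}$ splits completely --- in particular the real place, so $M$ is totally real, and every rational prime $p$ below a place of $S$ --- together with a point $P\in Y(M)$ lying in $\Omega_u$ at each place of $M$ above each $u\in S_{\mathbf{Q}}$. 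Take $H:=EM$, a Galois extension of $E$. Writing $M=\mathbf{Q}[t]/(g)$ with $g$ irreducible, $E\otimes_{\mathbf{Q}}M=E[t]/(g)$ is a product of copies of the field $H=EM$, so projecting $P\in Y(M)=X(E\otimes_{\mathbf{Q}}M)$ to one factor yields $P'\in X(H)$. Linear disjointness of $M$ from $\widetilde{F}/\mathbf{Q}$ forces $H\cap\widetilde{F}=E$, hence (as $H/E$ is Galois and $F\subseteq\widetilde{F}$) $H/E$ is linearly disjoint from $F/E$, giving \emph{(1)}. For $v\in S$ above $u\in S_{\mathbf{Q}}$: as $u$ splits completely in $M$, $g$ splits completely over $\mathbf{Q}_u\subseteq E_v$, so $H\otimes_E E_v\cong E_v^{\,[H:E]}$, i.e., $v$ splits completely in $H$ with $H_w=E_v$ for all $w\mid v$; and unwinding the adjunction, the image of $P'$ in $X(H_w)=X(E_v)$ is the $v$-component of $P\in\Omega_u=\prod_{v'\mid u}\Omega_{v'}$, hence lies in $\Omega_v$. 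This is \emph{(2)}, completing \emph{(3)} and with it \emph{(1)}, \emph{(2)} in general.

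The step I expect to be the main obstacle --- really the only non-formal one --- is verifying that $Y=\mathrm{Res}_{E/\mathbf{Q}}(X)$ is again a \emph{smooth, geometrically connected} variety over $\mathbf{Q}$, so that the classical theorem may be invoked: smoothness uses separability of $E/\mathbf{Q}$, and geometric connectedness rests on $Y\times_{\mathbf{Q}}\Qbar$ being a finite product of connected varieties, hence connected. One must also be a little careful at the archimedean place --- it is nonemptiness of $Y(\mathbf{R})=\prod_{v\mid\infty}X(E_v)$ that forces $M$ to be totally real --- and track, through the adjunction, that the descended point $P'$ inherits the prescribed $v$-adic behaviour. The remaining verifications are routine manipulations with composita and the Weil restriction adjunction.
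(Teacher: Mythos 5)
Your proof is correct and follows essentially the same approach as the paper: parts (1) and (2) are treated as a known form of Moret--Bailly (the paper cites Proposition 2.1 of Taylor's paper, which already packages the linear-disjointness refinement), and part (3) is obtained by applying that same result to $Y=\mathrm{Res}_{E/\Q}(X)$, with the key verification being that smoothness and geometric connectedness pass to the Weil restriction along the separable extension $E/\Q$. Your write-up merely spells out the adjunction bookkeeping that the paper leaves implicit, and the small caveat about nonemptiness of $X(E_v)$ at real places (needed to enlarge $S$ to include $\infty$) is a genuine implicit hypothesis of the statement that the paper's proof is also silent about.
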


\begin{proof}
Omitting part~(\ref{item:descend}), this is
 (a special case of) Proposition~2.1 of~\cite{TaylorSB}.
 To prove the additional statement, it suffices to apply
  Proposition~2.1 
of~\cite{TaylorSB}  to the restriction of scalars
  $Y = \mathrm{Res}_{E/\Q}(X)$. (Note that being smooth and geometrically connected is preserved
  under taking the restriction of scalars of a separable extension; for  see, for example, Theorem~A.5.9 of~\cite{PRG}.)
  \end{proof}

As a first application of this theorem, we prove the following result, which shows that the
inverse Galois problem  can be solved ``potentially,'' even with the imposition of local conditions
at a finite number of primes.

\begin{prop} Let $G$ be a finite group, let $E/\Q$ be a finite extension, and
$S$ a finite set of places of $E$. \label{prop:galois}
Let $F/E$ be an auxiliary finite extension of number fields.
For each finite place   $v \in S$, let  $H_v/E_v$ be a finite Galois
extension together with a fixed inclusion $\phi_v: \Gal(H_v/E_v) \rightarrow G$ with image $D_v$.
 For each
real infinite place $v \in S$, let $c_v \in G$ be an element of order dividing $2$.
There exists a number field $K/E$ and a finite Galois extension  of number fields $L/K$ 
with the following properties:
\begin{enumerate}
\item There is an isomorphism $\Gal(L/K) = G$.
\item $L/E$ is linearly disjoint from $F/E$.
\item All places in $S$ split completely in $K$. 
\item For all finite places $w$  of $K$ above $v \in S$, the local extension $L_w/K_w$ is equal to $H_v/E_v$.
Moreover, there is a commutative diagram:
$$
\begin{diagram}
\Gal(L_w/K_w) & \rTo & D_w \subset G \\
\dEquals & & \dEquals \\
\Gal(H_v/E_v) & \rTo^{\phi_v} & D_v \subset G \end{diagram}
$$
\item For all real places $w | \infty$ of $K$ above $v \in S$, complex conjugation $c_w \in G$ is conjugate
to $c_v$.
\end{enumerate}
\end{prop}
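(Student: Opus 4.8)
The plan is to deduce Proposition~\ref{prop:galois} from the extended Moret--Bailly Theorem~\ref{theorem:MB} by choosing $X$ to be a suitable variety parametrizing $G$-Galois extensions with the prescribed local behaviour. First I would invoke a result of the ``inverse Galois problem with local conditions'' type: there exists a number field $K_0/E$, linearly disjoint from $F/E$ after a suitable base change, and a $G$-Galois extension $L_0/K_0(T_1,\ldots,T_r)$ of a rational function field with group $G$ which is geometric (i.e. $G$ acts faithfully and $K_0$ is algebraically closed in $L_0$) — for instance, a faithful specialization of a generic polynomial or, absent that, an application of a suitable rigidity/embedding construction valid over the fixed base. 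The associated $G$-cover $\widetilde{X} \to X := \mathbb{A}^r_{K_0}$ (or an open subvariety thereof) then has the property that for a point $P \in X(K)$ outside the branch locus, the fibre gives a $G$-extension of $K$ whose decomposition groups at places $w$ are controlled by the image of $P$ in $X(K_w)$.

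Next I would encode the local conditions. For each finite $v \in S$, the requirement that $L_w/K_w \cong H_v/E_v$ compatibly with $\phi_v$ amounts to asking that the image of $P$ in $X(E_v)$ lie in a certain subset $\Omega_v$; the point is that the set of $E_v$-points of $X$ whose associated fibre realizes a fixed local $G$-extension is open (and one arranges it to be non-empty by first choosing, via Krasner's lemma and approximation, a $K_0$-rational or $E_v$-rational point realizing $H_v/E_v$, then shrinking). For each real $v \in S$, the condition that complex conjugation map to the conjugacy class of $c_v$ is again an open (indeed clopen) condition on $X(E_v) = X(\mathbb{R})$, since the conjugacy class of the Frobenius/complex-conjugation element is locally constant on the real locus away from branching; one uses here that a generating polynomial can be chosen so that all elements of order dividing $2$ in $G$ occur as complex conjugations on suitable real components, which is where one needs $G$ to admit the $c_v$ as complex-conjugation-type elements — note the statement only posits $c_v$ of order dividing $2$, so this is automatic for a sufficiently flexible realization. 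Having assembled non-empty open $\Omega_v \subset X(E_v)$ for all $v \in S$, Theorem~\ref{theorem:MB} produces a finite Galois $H/E$, linearly disjoint from $F/E$, in which every $v \in S$ splits completely, together with $P \in X(H)$ lying in each $\Omega_v$; we then set $K = H$ and let $L/K$ be the $G$-extension obtained by pulling back $\widetilde{X} \to X$ along $P$. Properties (1)--(5) follow: (1) and (2) from the construction and the disjointness clause, (3) from clause (2) of Theorem~\ref{theorem:MB}, and (4)--(5) from the membership $P \in \Omega_v$ together with the functoriality of the cover.

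The main obstacle is the very first step: producing, over the \emph{given} base field $E$ (not merely over $\mathbb{Q}$ or after an uncontrolled extension), a geometric $G$-cover of an affine space with enough rational points to meet arbitrary prescribed local extensions $H_v/E_v$. For a general finite group $G$ one does not know the regular inverse Galois problem, so one cannot literally write down such a cover; the way around this — and the way I would actually carry it out — is to \emph{not} insist on a single universal cover but instead to bootstrap: first solve the problem locally, realizing each $H_v/E_v$ inside some auxiliary global $G$-extension after a preliminary application of Moret--Bailly or weak approximation, and then glue. Concretely, one can take $X$ itself to be a variety built from a versal $G$-torsor (which exists over any field, e.g. $(\mathrm{GL}_N \times \cdots)/G$ for a faithful $G \hookrightarrow \mathrm{GL}_N$): a $G$-torsor over $K$ is the same as a $K$-point of $BG$, approximated by $K$-points of such an $X$, and the local conditions cut out opens exactly as above. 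This reduces the problem to the non-emptiness of the $\Omega_v$, i.e. to realizing each $H_v/E_v$ and each $c_v$ individually over the local field $E_v$, which is elementary. Care is needed to ensure $X$ is smooth and geometrically connected so that Theorem~\ref{theorem:MB} applies, and to handle the clause in part~(\ref{item:descend}) of that theorem only if one later wants $K$ to descend to a totally real field — but for the present proposition that refinement is not invoked, so I would omit it here.
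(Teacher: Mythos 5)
Your fallback construction (a versal $G$-torsor quotient such as $V^{\mathrm{free}}/G$ for a faithful linear or permutation representation) is essentially the paper's $X_G = \Spec\,\Q[x_1,\ldots,x_n]^G$, so you have landed on the right variety. But there is a genuine gap in the final step: a point $P \in X(K)$ of the versal base corresponds to a continuous homomorphism $G_K \to G$ up to conjugacy, and \emph{nothing in the construction forces this homomorphism to be surjective}. If its image is a proper subgroup $H \subsetneq G$, the fibre over $P$ is a disconnected \'{e}tale $K$-algebra, and the resulting Galois extension $L/K$ has group $H$, not $G$. Theorem~\ref{theorem:MB} supplies a point satisfying the imposed local conditions, not a Hilbert-irreducibility-generic point, so you cannot conclude connectedness of the fibre ``for free'' the way you could for a specialization of a geometric $G$-cover of $\mathbf{A}^1$. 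Your sentence ``Properties (1)--(5) follow: (1) and (2) from the construction'' is precisely where the argument breaks.

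The paper closes this gap by enlarging $S$: for each conjugacy class $\langle g \rangle \subset G$, one adjoins an auxiliary finite place and imposes the open local condition that the decomposition group there is unramified and generated by $g$. Then the image of $G_K \to G$ meets every conjugacy class of $G$, and Jordan's theorem (every proper subgroup of a finite group misses some conjugacy class) forces the image to be all of $G$. This auxiliary-places-plus-Jordan step is indispensable and is absent from your proposal; without it, conclusion (1) can fail. Your initial plan via a regular $G$-realization would indeed avoid this issue, but as you correctly observe it is unavailable for general $G$, so you are committed to the versal quotient and must therefore supply the Jordan argument. (A secondary, more cosmetic point: the versal torsor map is \'{e}tale, so there is no ``branch locus'' to avoid; what one must avoid is the non-free locus, and one works on the smooth open subscheme of the quotient, exactly as the paper does.)
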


\begin{proof} Suppose that $G$ acts faithfully on $n$ letters, and
let $G \hookrightarrow \Sigma$ denote the corresponding map from $G$ to the symmetric group.
(Any group admits such a faithful action, e.g., the regular representation.)
There is an induced action of $G$  on
$\Q[x_1,x_2,\ldots,x_n]$, and we may  let $X_G = \Spec(\Q[x_1,x_2,\ldots,x_n]^G)$. 
There are corresponding morphisms
$$\mathbf{A}^n \rightarrow X_G \rightarrow X_{\Sigma}.$$
The scheme
$X_G$ is affine, irreducible, geometrically connected, and contains a Zariski dense smooth open subscheme.
The variety $X_{\Sigma}$
is  canonically isomorphic to affine space $\mathbf{A}^n$ over $\Spec(\Q)$ via the symmetric
polynomials. Under the projection to $X_{\Sigma}$, A $K$-point of $X_G$ (for any perfect
field $K$) gives a polynomial
over $K$ such that the Galois group of its splitting field $L$ is a (not necessarily transitive) subgroup of $G$. Without loss of generality, we may enlarge $S$ in the following way:
 For each conjugacy class $\langle g \rangle \in G$, we add to $S$ an auxiliary finite place $v$
and impose a local condition that the decomposition group at $v$ is unramified 
and is the subgroup generated by $g$.
 For all $v \in S$, let $\Omega_v \subset X_G(E_v)$ denote the set of points of $X_G$ defined as follows.
 If $v$ is a finite place, we suppose that
 the extension 
$L_v/E_v$ is isomorphic to $H_v/E_v$, and moreover, the corresponding action of $D_v$ on $G$ is the one given by $\phi_v$.
To construct such a point, we need to show that, given a Galois group $D_v = \Gal(H_v/E_v)$, any permutation representation
on $\Sigma$ 
can be realized by an $n$-tuple of elements of $H_v$. By induction, it suffices to consider the case
of transitive representations.  For a point $x \in \Sigma$ whose stabilizer
is supposed to be $C_v = \Gal(H_v/A_v)$,  choose $x$ to be a primitive element of $A_v = H^{C_v}_v$,
and then extend this choice to the orbit of $x$ under the abstract group $D_v$ via the Galois action arising from $\phi_v$.
If $v$ is an infinite place, we simply require that $L_v/E_v$ has Galois group $\langle c_v \rangle$.
By assumption, these sets are non-zero, and by Krasner's Lemma they are open.
We deduce by Theorem~\ref{theorem:MB} (applied to the smooth open subscheme of $X_G$)
 that there exists a Galois extension $L/K$ with Galois group
$H \subset G$ with the required local decomposition groups at each place $w$ above $v$.
By construction, for every $g \in G$ there exists a finite unramified place $w$ in $K$ such
that the conjugacy class of Frobenius at $w$ in $\Gal(L/K)$ is the conjugacy class of $g$.
It follows that the intersection of $H$ with every conjugacy class of $G$ is nontrivial, and hence
$H = G$ by a well known theorem of Jordan (see Theorem~$4'$ of~\cite{Serre}). Thus the theorem is established.
\end{proof}

\begin{remark} \emph{A weaker version of Proposition~\ref{prop:galois}, namely, that every finite group
$G$ occurs as the Galois group $\Gal(L/K)$ for some extension of number fields, is a trivial consequence
of the fact that $\Sigma = S_n$ occurs as the Galois group of some extension of $\Q$, since we may take
$L$ to be any such  extension and $K = L^{G}$. If we insist that some place $v$ splits completely in $K$, however,
this will typically force $L$ to also split completely at $v$.}
\end{remark}

Let $\rho: G_{F^+} \rightarrow \GL_2(\Qbar_p)$ be as in Theorem~\ref{theorem:even2}.
After increasing $F^{+}$ (if necessary), we may assume that $\rho$ is semi-stable
at all primes of residue characteristic different from $p$. 
Attached to $\rho$ is a residual representation
$\rhobar:G_{F^{+}} \rightarrow \GL_2(\F)$ for some finite field $\F$ of characteristic $p$.

\begin{prop} \label{prop:res} There exists a  totally real field $F^{+}_1/F^{+}$ 
and a residual Galois representation $\rbar_{\residual}:
G_{F^{+}_1} \rightarrow \GL_2(\F)$
with the following properties:
\begin{enumerate} 
\item All primes above $p$ split completely in $F^{+}_1$.
\item The residual representation $\rbar_{\residual}:G_{F^{+}_1} \rightarrow \GL_2(\F)$ has image
containing $\SL_2(\F_p)$. \label{part2}
 \item For each $v|p$ in $F^{+}$, and for each $w$ above $v$ in $F^{+}_1$,
 there is an isomorphism $\rbar_{\residual} |D_w \simeq \rhobar| D_{v}$.
\item If $v \nmid p$, then
$\rbar_{\residual}|D_{v}$ is unramified.
\item $\rbar_{\residual}$ is totally odd at every real place of $F^{+}_1$.
\item $F^{+}_1 \cap \Q(\zeta_{p}) = \Q$ and $F^{+}_1 \cap \Q(\ker(\rhobar)) = \Q$.
\end{enumerate}
\end{prop}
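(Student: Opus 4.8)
The plan is to build $\rbar_{\residual}$ as the Galois representation attached to a carefully chosen Galois extension, using Proposition~\ref{prop:galois} (equivalently the Moret--Bailly Theorem~\ref{theorem:MB}) as the main engine. The target image is $\SL_2(\F_p) \subseteq \GL_2(\F)$, so the finite group we feed into Proposition~\ref{prop:galois} is $G = \SL_2(\F_p)$ (or a suitable subgroup of $\GL_2(\F)$ containing it, to accommodate the determinant/cyclotomic character); one must fix a faithful permutation action of $G$ as required there. First I would set $E = F^{+}$ and choose the finite set $S$ of places: it should contain all places $v \mid p$ of $F^{+}$, enough auxiliary places to pin down the conjugacy classes (this is handled internally by Proposition~\ref{prop:galois}), and — crucially — all the real (archimedean) places of $F^{+}$. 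At each $v \mid p$, the prescribed local extension $H_v/F^{+}_v$ is the one cut out by $\rhobar|D_v$, together with the tautological inclusion $\phi_v : \Gal(H_v/F^{+}_v) \hookrightarrow G$; this will force property~(3). At each real place $v$, I would prescribe $c_v$ to be an element of order $2$ in $G = \SL_2(\F_p)$ with the required sign behaviour — since $-1 \in \SL_2(\F_p)$ and $\rhobar$ may be taken odd, there is a conjugacy class of order-$2$ elements (namely $-\mathrm{Id}$, or a semisimple involution with eigenvalues $\{1,-1\}$ depending on whether one wants $\det = 1$ along complex conjugation) giving total oddness; this yields property~(5).

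Applying Proposition~\ref{prop:galois} with the auxiliary field $F$ taken to be $\Q(\zeta_p) \cdot \Q(\ker \rhobar) \cdot F^{+}$ (composited appropriately over $E$) produces a totally real field $K$ — here totally real because the $c_v$ were chosen of order dividing $2$ and $S$ contains all archimedean places, so part~(3) of Theorem~\ref{theorem:MB} applies and $K$ can be taken of the form $F^{+} M$ with $M/\Q$ totally real — together with a $G$-extension $L/K$. Set $F^{+}_1 = K$ and let $\rbar_{\residual} : G_{F^{+}_1} \to \GL_2(\F)$ be the composite $G_{F^{+}_1} \twoheadrightarrow \Gal(L/F^{+}_1) = G \hookrightarrow \GL_2(\F)$. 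Property~(1) is the ``split completely'' conclusion of Proposition~\ref{prop:galois}(3) for the places above $p$; property~(2) is immediate since the image is literally $G \supseteq \SL_2(\F_p)$; property~(4) (unramified away from $p$) is arranged by requiring at the finitely many other places of $S$ (and there are no others ramifying, since outside $S$ the extension $L/K$ is unramified by construction of $X_G$ as a smooth model) that $H_v/F^{+}_v$ be unramified — one simply does not include ramified local conditions at primes not dividing $p$; and property~(6), the linear disjointness $F^{+}_1 \cap \Q(\zeta_p) = \Q$ and $F^{+}_1 \cap \Q(\ker\rhobar) = \Q$, follows from the linear-disjointness clause (2) of Proposition~\ref{prop:galois} with the auxiliary $F$ chosen above, after intersecting down to $\Q$ using that $M/\Q$ is the relevant piece.

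The main obstacle I anticipate is bookkeeping around the compatibility of the \emph{determinant}: $\rhobar|D_v$ has determinant a power of the cyclotomic character, and one wants $\rbar_{\residual}$ to be a genuine $2$-dimensional representation landing in $\GL_2(\F)$ whose restriction at $v\mid p$ is \emph{isomorphic} to $\rhobar|D_v$ (not merely projectively so), which constrains how $G$ sits inside $\GL_2(\F)$ and how the local inclusions $\phi_v$ interact globally — in particular one needs the local determinants to glue to a global character, and getting that character to be the restriction of the mod-$p$ cyclotomic character (so that oddness at $\infty$ is consistent) requires choosing $S$ and the $c_v$ compatibly. I would handle this by working with $G$ equal to the subgroup of $\GL_2(\F)$ generated by $\SL_2(\F_p)$ and the image of $\rhobar$, tracking the determinant as an auxiliary abelian character, and invoking Proposition~\ref{prop:galois} for this $G$; the reconciliation of the global determinant with $\epsp$ (or an unramified-outside-$p$ twist thereof) is a finite computation in class field theory that I would not carry out here. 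A secondary, more routine point is checking that prescribing $\rhobar|D_v$ locally at $v\mid p$ is an \emph{open} condition on $X_G(F^{+}_v)$, which follows from Krasner's Lemma exactly as in the proof of Proposition~\ref{prop:galois}.
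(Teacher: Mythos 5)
Your overall strategy is the same as the paper's: apply Proposition~\ref{prop:galois} to $E=F^{+}$ with local conditions prescribed by $\rhobar|D_v$ at $v\mid p$ and a chosen order-$2$ element $c_v$ with $\det c_v=-1$ at the real places, taking the auxiliary field $F$ to absorb $\Q(\zeta_p)$ and $\Q(\ker\rhobar)$ to get condition~(6). That much is correct and is essentially what the paper does.

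There is, however, a genuine gap in your treatment of condition~(4). You assert that ``outside $S$ the extension $L/K$ is unramified by construction of $X_G$ as a smooth model,'' and on that basis claim unramifiedness away from $p$ comes for free. This is false: smoothness of (an open in) $X_G$ is a geometric property of the parameter variety and gives no control whatsoever over the ramification of the resulting $G$-extension $L/K$ at places of $K$ outside $S$. Moret--Bailly only controls local behaviour at the finitely many places in $S$; at all other places the extension cut out by the chosen rational point may well be ramified, and generically will be. The paper handles this correctly by noting that condition~(4) is ``achieved by a further base extension'': after Proposition~\ref{prop:galois} produces $\rbar_{\residual}$ satisfying (1)--(3), (5), (6), one base-changes to a further totally real extension (still with $p$ split and still linearly disjoint from the relevant fields) over which the finitely many extra ramified primes of $\rbar_{\residual}$ become unramified. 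Your write-up needs to replace the ``automatic unramifiedness'' claim with this extra base-change step.

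A secondary point: the extended discussion of gluing local determinants to a global character and matching it with the cyclotomic character is not needed. Proposition~\ref{prop:galois} directly produces a surjection $G_{F^{+}_1}\twoheadrightarrow G\subseteq\GL_2(\F)$ with the prescribed decomposition-group data, and the determinant of $\rbar_{\residual}$ is then simply the composite with $\det:G\to\F^{\times}$; nothing in the statement of Proposition~\ref{prop:res} requires this to agree with $\epsp$. The only determinant constraints are the local ones at $v\mid p$ (absorbed into the prescription $\rbar_{\residual}|D_w\simeq\rhobar|D_v$) and the oddness condition $\det\rbar_{\residual}(c)=-1$ at infinity (absorbed into the choice of $c_v$), both of which are handled once $G$ is taken large enough to contain $\SL_2(\F_p)$, the images of $\rhobar|D_v$ for $v\mid p$, and an element of order $2$ with determinant $-1$.
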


\begin{proof} Proposition~\ref{prop:galois}  immediately guarantees a residual
representation satisfying all the conditions with the possible exception of $(4)$,
which can be achieved by a further base extension.
\end{proof}

\begin{prop} \label{prop:res2} There exists a totally real field
$F^{+}_2/F^{+}_1$ and a Hilbert modular form $f$ for
$F^{+}_2$ 
with a corresponding residual representation
$\rhobar_f: G_{F^{+}_2} \rightarrow \GL_2(\F)$ with the following properties:
\begin{enumerate}
\item There is an isomorphism
$\rhobar_f \simeq \rbar_{\residual}|G_{F^{+}_2}$.
\item All primes above $p$ split completely in $F^{+}_2$.
\item $F^{+}_2 \cap \Q(\zeta_{p}) = \Q$, $F^{+}_2 \cap \Q(\ker(\rhobar)) = \Q$, and $F^{+}_2 \cap \Q(\ker(\rbar_{\residual})) = \Q$. 
\end{enumerate}
\end{prop}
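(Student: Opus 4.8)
The plan is to produce $F^{+}_2$ and the Hilbert modular form $f$ by invoking a potential modularity theorem for the \emph{residual} representation $\rbar_{\residual}$ constructed in Proposition~\ref{prop:res}. The key point is that $\rbar_{\residual}$ has already been arranged to be totally odd, to have image containing $\SL_2(\F_p)$, and to have prescribed local behaviour at $p$ (with $p$ split completely in $F^{+}_1$), so the only work is to feed these properties into an existing theorem on the modularity of mod $p$ Galois representations over totally real fields and to keep track of the linear disjointness conditions in~(3).

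First I would recall that since the image of $\rbar_{\residual}$ contains $\SL_2(\F_p)$ and $p > 7$, the representation $\Sym^2 \rbar_{\residual}|_{G_{F^{+}_1(\zeta_p)}}$ is irreducible, so $\rbar_{\residual}$ is not dihedral and (after the base change already built into $F^{+}_1$, which ensures $F^{+}_1 \cap \Q(\zeta_p) = \Q$) the relevant ``big image'' / adequateness hypotheses of the potential modularity machinery are satisfied. Next I would apply a Khare--Wintenberger / Taylor-style potential modularity result (in the form in which it is used in~\cite{BLGGT}, or the original argument of Taylor~\cite{TaylorSB} together with Skinner--Wiles-type modularity lifting over totally real fields): there exists a totally real extension $F^{+}_2/F^{+}_1$ over which $\rbar_{\residual}|_{G_{F^{+}_2}}$ is modular, i.e., arises from a Hilbert modular form $f$, with $\rhobar_f \simeq \rbar_{\residual}|_{G_{F^{+}_2}}$. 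The construction of $F^{+}_2$ proceeds, as usual, by finding an appropriate point on a Hilbert modular variety (or a suitable Hilbert--Blumenthal abelian variety / hypersurface) with prescribed local behaviour, which is exactly where Theorem~\ref{theorem:MB} enters; its part~(\ref{item:descend}), applied with the set of places over $p$ included, lets us take $F^{+}_2 = F^{+}_1 M$ with $M/\Q$ totally real and $p$ split completely in $M$, giving condition~(2). Finally, the linear disjointness assertions in condition~(3) — namely $F^{+}_2 \cap \Q(\zeta_p) = \Q$, $F^{+}_2 \cap \Q(\ker(\rhobar)) = \Q$, and $F^{+}_2 \cap \Q(\ker(\rbar_{\residual})) = \Q$ — are arranged by applying the linear disjointness clause~(1) of Theorem~\ref{theorem:MB} with the auxiliary field $F$ taken to be the compositum of $\Q(\zeta_p)$, $\Q(\ker(\rhobar))$, $\Q(\ker(\rbar_{\residual}))$ (and $F^{+}_1$), together with the fact that these disjointness properties already hold for $F^{+}_1$ by Proposition~\ref{prop:res}(6).

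The main obstacle is purely bookkeeping rather than conceptual: one must check that the potential modularity theorem being quoted can be applied with \emph{all} the local conditions and linear-disjointness constraints simultaneously, and in particular that the auxiliary place conditions needed for potential modularity (e.g. an auxiliary Taylor--Wiles prime, or a Steinberg/special place to rigidify the deformation problem) can be imposed compatibly with requiring that $p$ split completely and with the linear disjointness from $F$. This is where the remark of the acknowledgements (that Proposition~\ref{prop:res2} ``could be deduced immediately from~\cite{snow}'') does the heavy lifting: I would cite the relevant potential modularity statement from~\cite{snow} for $\SL_2(\F_p)$-type representations over totally real fields, observe that its hypotheses ($p>7$, image $\supseteq \SL_2(\F_p)$, total oddness, $F^{+}_1 \cap \Q(\zeta_p) = \Q$) are met, and then note that the construction is flexible enough — via Theorem~\ref{theorem:MB}(\ref{item:descend}) and the linear disjointness clause — to also secure conditions~(2) and~(3). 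That completes the proof.
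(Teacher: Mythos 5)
Your proposal follows essentially the same route as the paper: cite the potential modularity theorem of~\cite{snow} (the paper uses Theorem~5.1.1 together with Proposition~8.2.1 of \emph{ibid.}\ to handle the split-at-$p$ and linear-disjointness requirements), with the one technical input being that each local representation $\rbar_{\residual}|D_v$ for $v|p$ admits a compatible definite type, which the paper pins down as Proposition~7.8.1 of~\cite{snow}. Your sketch is correct in outline but leaves that last compatibility check (existence of a suitable local type at each $v|p$) as a vague ``bookkeeping'' step, whereas the paper isolates it as the only thing genuinely needing verification.
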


\begin{proof} This follows immediately from Theorem~5.1.1 of~\cite{snow}, taking into account
Proposition~8.2.1 of \emph{ibid}. It suffices to note (in the notation of \emph{ibid}.) that there always exists
at least one definite type $t(v)$ compatible with $\rbar_{\residual}|D_v$ for each $v|p$, but this is exactly Proposition~7.8.1 of~\cite{snow}.
\end{proof}

\begin{remark} \label{remark:oddcase}
\emph{If $\rho$ is \emph{odd} for all infinite places of $F^{+}_1$, then we may take
$\rbar_{\residual}$ to be $\rhobar$, and Proposion~\ref{prop:res2} implies that
$\rhobar |G_{F^{+}_2}$ is modular.
By Theorem~2.2.18 of~\cite{Kisin}, it follows in this case
that  $\rho$ is modular over $F^{+}_2$.
(This is not literally correct, because $\rhobar$ may not 
have image containing $\SL_2(\F_p)$ and so not virtually satisfy
Condition~\ref{part2} of Proposition~\ref{prop:res}. However, one may
check that the only fact used about the image of $\rbar$ so far is that
it is irreducible.)}
\end{remark}

\medskip

Having realized the representations $\rhobar|D_v$ for $v|p$ inside
the mod-$p$ reduction of some  Hilbert modular form $f$, 
we now realize the representations
$\rho|D_v$ in characteristic zero
as coming from Hilbert modular forms
(to the extent that it is possible). 

\begin{prop}   \label{prop:kisin}
There exists a  Hilbert modular form $g$ over
$F^{+}_2$
with the following properties:
\begin{enumerate} 
\item The residual representation $\rhobar_g:G_{F^{+}_2} \rightarrow \GL_2(\F)$ 
is equal to $\rhobar_f$.
\item For each place $w|p$ of  $F^{+}$ and $v|w$ of $F^{+}_2$,
$\rho_g |D_v \sim \rho|D_w$ if $\rho |D_w$ is potentially crystalline, and
 $ \rho_g |D_v  \zap \rho |D_w$ otherwise.
\item For each finite place $v$ of $F^{+}_2$ away from $p$, $\rho_g |D_v$
is unramified.
\end{enumerate}
\end{prop}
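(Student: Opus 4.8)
The plan is to construct $g$ by a local–global argument: first pin down the desired local behavior at $v|p$ by exhibiting suitable points on the local deformation rings, then realize a compatible global modular lift of $\rhobar_f$ using the automorphy lifting and potential modularity machinery over $F^{+}_2$. Concretely, for each place $w|p$ of $F^{+}$, the representation $\rho|D_w$ determines a type $\tau_w$, a Hodge type $\v_w$, and a point on $\Spec(\Rboxtv_{\rhobar|D_w}[1/p])$; since $p$ splits completely in $F^{+}_2$ and $\rbar_{\residual}|D_w \simeq \rhobar|D_w$ for each $w$ (Proposition~\ref{prop:res}(3), carried down to $F^{+}_2$), the corresponding local deformation problems for $\rhobar_f|D_v$ at places $v|w$ of $F^{+}_2$ are literally the same. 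So the target local points are already in hand; what must be produced is a global modular $g/F^{+}_2$ lifting $\rhobar_f$ whose local components at each $v|p$ land on the \emph{same irreducible component} as $\rho|D_w$, and which is unramified away from $p$.

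First I would invoke Proposition~\ref{prop:res2}: $\rhobar_f \simeq \rbar_{\residual}|G_{F^{+}_2}$ is modular (comes from a Hilbert modular form $f$), and by construction (Proposition~\ref{prop:res}) its image contains $\SL_2(\F_p)$ and it is totally odd, so the Taylor–Wiles hypotheses are satisfied after adjusting $F^{+}_2$ slightly. Next, the key input is Kisin's theorem \cite{KisinFM} (which applies since $p$ splits completely in $F^{+}_2$) that \emph{every} irreducible component of the relevant local deformation ring $\Spec(\Rboxtv_{\rhobar_f|D_v}[1/p])$ (or the crystalline variant $\Rboxtvc$ when $\rho|D_w$ is potentially crystalline) contains a point which is the local component of a global modular form — this is exactly the fact the author thanks Kisin for in the acknowledgements. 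Applying this at each $v|p$, with type and Hodge type $(\tau_w, \v_w)$ and choosing the component containing the point cut out by $\rho|D_w$, and patching the local choices together (possible because $p$ splits completely, so the places $v|p$ contribute independently, and condition~$(3)$ of Theorem~\ref{theorem:even2} is precisely what is needed for these local rings to be understood via \cite{KisinFM}), I would produce a modular Galois representation $\rho_g: G_{F^{+}_2}\to\GL_2(\Qbar_p)$ with $\rhobar_g = \rhobar_f$, with $\rho_g|D_v$ on the prescribed component for each $v|p$, and unramified away from $p$ (using an Artin-type / minimal-level Hilbert modular form at the auxiliary primes). One then checks, using Lemma~\ref{lemma:smooth} and Example~\ref{example:very}, that in the potentially semi-stable non-crystalline case $\rho|D_w$ is automatically very smooth, so $\rho_g|D_v \zap \rho|D_w$; in the potentially crystalline case $\Spec(\Rboxtvc[1/p])$ is smooth (second Remark after the definition of $\zap$), so lying on the same component gives $\rho_g|D_v \sim \rho|D_w$ directly.

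The main obstacle, and the step requiring genuine care rather than bookkeeping, is the assertion that each irreducible component of the potentially semi-stable local deformation ring contains a point coming from a \emph{global} Hilbert modular form with the correct residual representation and with no ramification away from $p$ — i.e., extracting from \cite{KisinFM} a statement about \emph{all} components, for our specific $\rhobar_f$, over a field where $p$ splits completely. This is where condition~$(3)$ of Theorem~\ref{theorem:even2} enters decisively (it is needed to handle the shape of $\rhobar|D_v$ in Kisin's analysis), and where one must be careful that the base change to $F^{+}_2$ does not destroy the big-image and linear-disjointness hypotheses; these were arranged in Propositions~\ref{prop:res} and~\ref{prop:res2} precisely for this purpose. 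A secondary point is ensuring the local types $\tau_w$ and Hodge types $\v_w$ can be simultaneously matched at all $v|p$ — again immediate since $p$ splits completely — and that the global form can be taken unramified outside $p$, which follows from the earlier reduction (before Proposition~\ref{prop:res}) that $\rho$, and hence $\rbar_{\residual}$ and $\rhobar_f$, are unramified (indeed we arranged $\rbar_{\residual}|D_v$ unramified for $v\nmid p$) away from $p$, together with a suitable choice of auxiliary level.
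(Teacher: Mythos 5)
Your outline matches the paper's strategy, and you correctly identify the crucial ingredient and where the danger lies. However, the step you flag as ``the main obstacle'' is exactly the part of the argument you must actually supply, and it cannot be cited as a single theorem of~\cite{KisinFM}: there is no statement there of the form ``every irreducible component of the local potentially semi-stable deformation ring contains a point coming from a classical Hilbert modular form.'' What~\cite{KisinFM} (Theorem~2.2.18 and Corollary~2.2.17) provides is that the \emph{patched} module $M_{\infty}$ is faithful over the \emph{patched} ring $\bar{R}_{\infty}$. That statement lives after Taylor--Wiles patching and does not, by itself, produce a classical form.

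To turn faithfulness into the existence of $g$, the paper has to do two further things. First, it uses the Taylor--Wiles--Kisin presentation $\bar{R}_{\infty} \cong \bar{R}^{\Box,\psi}_{\Sigma_p}\llbracket x_1,\ldots,x_g\rrbracket$, so that irreducible components of $\Spec(\bar{R}_{\infty})$ are in bijection with those of the tensor product of local deformation rings at $v|p$; one then picks the component $Z$ whose characteristic-zero points match the local component of $\rho|D_w$ at each $v|p$. Second, one must descend from the patched level to a finite (in fact minimal) level: using that $\bar{R}_{\infty}$ is finite and torsion-free over $\OL\llbracket\Delta_{\infty}\rrbracket$ (the equivalent conditions of Lemma~2.2.11 of~\cite{KisinFM}), the chosen component $Z$ surjects onto $\Spec(\OL\llbracket\Delta_{\infty}\rrbracket)$ and therefore has a nonempty fibre over $\Delta_\infty=0$; since $M_0 = M_{\infty}\otimes_{\OL\llbracket\Delta_{\infty}\rrbracket}\OL$ is a space of genuine classical minimal-level Hilbert modular forms, this fibre produces the desired $g$ on the correct local component. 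Your proposal's remaining points --- that condition~(3) of Theorem~\ref{theorem:even2} is what makes Kisin's analysis of the local rings applicable, that $p$ splitting completely lets one handle the places $v|p$ independently, and that Example~\ref{example:very} (resp.\ smoothness of $\Rboxtvc$) upgrades ``same component'' to $\zap$ (resp.\ $\sim$) --- are all correct and align with the paper. The missing content is precisely the faithfulness-plus-descent mechanism sketched above, which is the real substance of the proof rather than a citable black box.
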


\begin{proof} Consider the modular representation $\rhobar_f = \rbar_{\residual}|G_{F^{+}_2}$ constructed in
Proposition~\ref{prop:res2}. By construction, it is modular of minimal level
and is unramified outside $p$.
It follows from Theorem~2.2.18 and Corollary~2.2.17 of~\cite{KisinFM} that (in the notation of \emph{ibid.})
$M_{\infty}$ is faithful as an $\bar{R}_{\infty}$-module. 
To orient the reader, $M_{\infty}$ is a module which is built (by Taylor--Wiles patching) from a sequence
of faithful Hecke modules of finite level, and  $\bar{R}_{\infty}$ is a (patched) decorated universal deformation ring
which encodes deformations of a prescribed type at all $v|p$. In particular, the ring $\bar{R}_{\infty}$
detects \emph{all} the components of the local deformation rings at $v|p$ of the given type. 
Hence, at least morally (since at this point we are working with patched modules) the faithfulness of $M_{\infty}$
says that ``every component'' of the corresponding local deformation rings is realized globally. To show this 
formally, we need to pass from the patched level back down to finite level by setting
all the
auxiliary Taylor--Wiles variables $\Delta_{\infty}$ equal to zero.
Explicitly,  the Taylor--Wiles--Kisin method yeilds and isomorphism
$\bar{R}_{\infty} =\bar{R}^{\Box,\psi}_{\Sigma_p}
\llbracket x_1, \ldots, x_g \rrbracket$ of $\bar{R}_{\infty}$ as  a power series ring
over a tensor product of local deformation rings. 
Here $\Sigma_p$ denotes the set of places dividing $p$.
 Consider a component
$Z$ of $\Spec(\bar{R}_{\infty})$ such that the characteristic zero points of $Z$ lie on the
same local component as $\rho|D_v$ for $v | p$ (if $\rho|D_v$ lies on multiple components,
choose any component).
Since all the (equivalent) conditions of Lemma~2.2.11 of~\emph{ibid.}\ hold,
we know (as in the proof of and notation of that lemma) that
$\bar{R}_{\infty}$ is a finite torsion free
$\OL\llbracket \Delta_{\infty} \rrbracket$-module. In particular, $Z$ surjects onto
$\Spec(\OL \llbracket \Delta_{\infty}\rrbracket)$. In particular, there is a non-trivial
fibre at $0$. Since
 $M_0 = M_{\infty} \otimes_{\OL\llbracket \Delta_{\infty} \rrbracket} \OL$ is a space
 of \emph{classical} modular forms (of minimal level), we deduce the existence of $g$.
 Note that if $\rho|D_w$ is potentially semi-stable but not potentially crystalline, then $\rho_g$ is
 very smooth by Example~\ref{example:very}, and hence
 $\rho_g |D_v \zap \rho |D_w$ for $v|p$.
\end{proof}

\medskip

\begin{remark} \emph{The faithfulness of $M_{\infty}$ as a  $\bar{R}_{\infty}$-module
is not a clear consequence of the Fontaine--Mazur conjecture. 
That is, \emph{a priori}, the collection of all global representations may
surreptitiously conspire to avoid a given local component. Thus, without any further
ideas, the new cases of the Fontaine--Mazur conjecture proved by Emerton~\cite{EmertonFM} do
not allow us to realize all local representations globally when
$$\rhobar|D_p \sim \left( \begin{matrix} \epsp & * \\ 0 & 1 \end{matrix} \right).$$}
\end{remark}

\medskip

We have now constructed a Hilbert modular form $g$ whose $p$-adic representation
is a ``shadow'' of $\rho$, that is,  lies on the same
component as $\rho$ of every local deformation space at a place dividing $p$.
However, the global mod-$p$
representations $\rhobar$ and $\rhobar_g$ are unrelated.
In order to prove a modularity statement, we will need to construct
a second pair of shadow representations with the same residual
representation as $\rhobar$ and $\rhobar_g$. It will never be the case,
however, that $\rhobar$ will be automorphic over a totally real field unless
$\rhobar$ is totally odd. 
The main idea of~\cite{CalegariFM} was to  consider the
representation
$\Sym^2(\rhobar)$ as a conjugate self-dual representation over some CM field.
In the sequel, we shall construct a pair of RACSDC ordinary crystalline shadow representations
which realize the mod-$p$ representations $\Sym^2(\rhobar)$ and
$\Sym^2(\rhobar_g)$.
By abuse of notation, let $\v$  denote the Hodge type of $\rho$ 
for any prime dividing $p$,
so $\v$ is literally a collection of Hodge types for each $v|p$ in $F^{+}$. Similarly,
suppose that $\rho$ is potentially semi-stable of type $\tau$, where $\tau$ refers to a collection
of types for all $v|p$
 (adding subscripts
would add nothing to the readability of the following argument).

\begin{prop} 
Let $p > 7$ be  prime. \label{prop:ord}
There exists a totally real field $F^{+}_5/F^{+}_2$,
a CM extension  $F^{}_5/F^{+}_2$, and a RACDSC automorphic representation $\pi$ over $F^{}_5$
with a  corresponding Galois representation
$\rho_{\pi}:  G_{F_5} \rightarrow \GL_3(\Qbar_p)$ such that:
\label{lemma:first}
\begin{enumerate}
\item $\rho_{\pi}$ is unramified at all places  not dividing $p\cdot \infty$.
\item For every $v|p$,   $\rho_{\pi} |G_v$ is ordinary and crystalline with Hodge type $\Sym^2 \w$, where
$\w$ is a Hodge type of some $2$-dimensional de Rham representation.
\item For all $v|p$ and for all $i$, there is an inequality
$\dim \mathrm{gr}^i (\Sym^2 \v \otimes \Sym^2 \w) \le 1$. 
\item The image of the restriction of $\rhobar$ to $G_{F^{+}_5}$ is the
image of $\rhobar$ on $G_{\Q}$.
\item The restriction of $\rhobar_g$ to $G_{F^{+}_5}$ has image containing $\SL_2(\F_p)$.
\item The residual Galois representation $\rhobar_{\pi}:G_{F^{}_5} \rightarrow \GL_3(\F)$ is isomorphic
to the  restriction of $\Sym^2(\rhobar)$ to $G_{F^{}_5}$.
\item The Hilbert modular form $g$ remains modular over $F^{+}_5$.
\item The compatible family of Galois representations associated to $\pi$ is \label{part8}
irreducible after restriction to any finite index subgroup of $G_{F_5}$.
\item  If $\rho_g |D_p$ is not potentially crystalline, the representation $\rho_{\pi} \otimes \Sym^2(\rho_g) |D_p$ is a
very smooth point of 
$$\Spec(R^{\Box,\Sym^2(\tau) \otimes \Sym^2(\mathds{1}), \Sym^2 \v \otimes \Sym^2 \w}[1/p]).$$
\item $F^{}_5 \cap \Q(\zeta_p) = \Q$.
\item  $\rhobar_{\pi}|D_v$ is trivial for every $v|p$.
\end{enumerate}
\end{prop}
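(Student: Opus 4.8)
The plan is to build $\pi$ (equivalently $\rho_\pi$) as the automorphic induction to a CM field of a suitable Hecke character, or more precisely to manufacture it via potential automorphy applied to a carefully chosen \emph{crystalline ordinary} $3$-dimensional representation, and then to use Theorem~\ref{theorem:MB} (in the form of Proposition~\ref{prop:galois}) to arrange all of the numerous local and global disjointness conditions simultaneously. First I would choose, for each $v \mid p$ in $F^+$, a Hodge type $\w = \w_v$ for a $2$-dimensional de~Rham representation so that the ``interleaving'' condition (3) holds: since $\Sym^2\v$ and $\Sym^2\w$ are each collections of three Hodge--Tate weights, and $p$ splits completely so each $v\mid p$ contributes a single embedding, condition (3) is the requirement that the six weights $\{\mathrm{gr}^i\}$ making up $\Sym^2\v\otimes\Sym^2\w$ be pairwise distinct at each place; this is a Zariski-open condition on $\w$ and is easily met by taking $\w$ generic (e.g.\ very large gaps compared with $\v$, which also forces $\Sym^2\w$ to be ``spread out''). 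Having fixed the Hodge types, I would produce a crystalline ordinary lift of $\Sym^2(\rhobar)$ over a suitable totally real base, for instance by first using Proposition~\ref{prop:galois} to realize $\Sym^2(\rhobar)$ (or rather an induced $3$-dimensional representation with the correct local behavior and with $\rhobar_\pi|D_v$ trivial, which one arranges by choosing $F^+_5$ so that $p$ splits completely and the decomposition groups at $v\mid p$ act trivially on the relevant torsion module) with the prescribed local conditions: trivial at $v\mid p$, unramified away from $p\cdot\infty$, and totally odd at the real places so that it can be conjugate-self-dual over the CM extension $F_5/F^+_2$.

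Next I would invoke a potential automorphy theorem of the Barnet-Lamb--Gee--Geraghty--Taylor type for $\GL_3$ over CM fields — applicable precisely because the target representation is ordinary crystalline and conjugate-self-dual, and because $\Sym^2(\rhobar)|_{G_{F^+(\zeta_p)}}$ is irreducible (hypothesis (2) of Theorem~\ref{theorem:even2}), which after the base changes in Propositions~\ref{prop:res}--\ref{prop:kisin} gives the big-image/adequacy input needed — to conclude that, over a further solvable totally real extension $F^+_5$ with CM quadratic extension $F_5$ in which $p$ splits completely, the representation becomes automorphic, arising from a RACSDC $\pi$. The ordinary crystalline hypothesis is what lets me cite an ordinary automorphy lifting theorem directly rather than worrying about potential diagonalizability. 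Conditions (1), (2), (6), (10), (11) are then built into the construction; condition (7) (that $g$ remains modular over $F^+_5$) follows because $F^+_5/F^+_2$ can be taken linearly disjoint from the field cut out by $\rhobar_g$ (using clause (2) of Theorem~\ref{theorem:MB} and the disjointness clauses in Propositions~\ref{prop:res}--\ref{prop:res2}), so Langlands base change preserves cuspidality; and conditions (4), (5) — that the images of $\rhobar$ and of $\rhobar_g$ do not shrink on restriction to $G_{F^+_5}$ — follow from the same linear-disjointness bookkeeping, which is exactly what the ``$F^+_i\cap\Q(\ker(\cdots))=\Q$'' conditions are designed to preserve through each application of Moret-Bailly.

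For condition (8), irreducibility of the compatible family after restriction to any finite-index subgroup: I would arrange $\rho_\pi$ to be (potentially) induced in a controlled way, or alternatively arrange its residual image to be large — e.g.\ by choosing the original $3$-dimensional representation in Proposition~\ref{prop:galois} to have image $\SL_2(\F_p)$ acting through $\Sym^2$, or an even larger primitive image — so that no finite base change can break it up; one combines this with the fact (Proposition~\ref{prop:res}(2), transported along the disjointness conditions) that $\rhobar$ already has image containing $\SL_2(\F_p)$. Finally, condition (9): if $\rho_g|D_p$ is not potentially crystalline, then $\rho_g|D_p$ is genuinely potentially semistable, hence by Example~\ref{example:very} $\Sym^2(\rho_g)|D_p$ is very smooth; since $\rho_\pi|D_p$ is crystalline and, by condition (3), the tensor product $\rho_\pi\otimes\Sym^2(\rho_g)|D_p$ has all Hodge--Tate multiplicities $\le 1$, the monodromy operator $N$ on $D_{\mathrm{pst}}$ of the tensor product still satisfies the isomorphism criterion of Lemma~\ref{lemma:smooth} (the $N$ of a tensor product is the sum of the $N$'s, and the weight-$1$ multiplicity bound forces each graded piece to match up), so Lemma~\ref{lemma:explicit} gives very smoothness in the claimed local deformation ring.

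I expect the main obstacle to be condition (3) together with condition (2): one must choose the Hodge type $\w$ of the ``shadow'' \emph{before} knowing that an ordinary crystalline automorphic $\rho_\pi$ with that Hodge type actually exists, and then verify that the potential automorphy/lifting machinery can be run with \emph{that specific} Hodge type while keeping $\rhobar_\pi|D_v$ trivial and keeping all the disjointness conditions — i.e.\ the real work is checking that the regularity, oddness, and adequacy hypotheses of the $\GL_3$ automorphy lifting theorem of~\cite{BLGGT} are met by the particular $\Sym^2$-type representation constructed here, uniformly over all $v\mid p$, rather than any single individual step.
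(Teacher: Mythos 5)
Your overall architecture matches the paper's: realize $\Sym^2(\rhobar)$ trivially at $v\mid p$ over a base-changed field, produce an ordinary crystalline automorphic lift (this is Proposition~3.3.1 of \cite{BLGGT}), track disjointness via Moret--Bailly, and deduce very smoothness for (9) from the separation of Frobenius slopes of the ordinary factor. But there is a genuine gap in your treatment of condition~(7), and a couple of smaller inaccuracies.

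The main problem is your claim that ``$g$ remains modular over $F^+_5$'' because linear disjointness lets you invoke ``Langlands base change.'' Base change for $\GL(2)$ over totally real fields is only available for \emph{solvable} extensions, and the extension $F^+_5/F^+_2$ produced by Moret--Bailly is emphatically not solvable (it is designed to realize the whole image of $\rhobar_\pi$, which is non-solvable). The automorphy of $\rho_g|_{G_{F^+_5}}$ is therefore not a formal consequence of base change --- it is exactly the kind of statement that requires a potential automorphy theorem, and it has to be run \emph{simultaneously} with the potential automorphy of $\pi$. The paper handles this by choosing an auxiliary prime $l$ and a place $\lambda\mid l$ of $F^+_2$ at which $\rho_{g,\lambda}$ is crystalline in the Fontaine--Laffaille range with $\rhobar_{g,\lambda}$ absolutely irreducible, so that $\rho_{g,\lambda}$ (at $\lambda$) and $\rho_\pi$ (at $p$, being ordinary) are both potentially diagonalizable; it then invokes Theorem~4.5.1 of~\cite{BLGGT} to make $\pi$ and the $\lambda$-adic representation of $g$ automorphic over the \emph{same} $F_5$. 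Without this auxiliary-prime device there is no way to verify potential diagonalizability of $\rho_g|D_p$ itself (which is the whole source of difficulty in the paper), and the modularity of $g$ over $F^+_5$ is left unjustified.

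Two smaller points. First, Proposition~\ref{prop:galois} cannot be used to ``realize $\Sym^2(\rhobar)$ with prescribed local conditions'': $\Sym^2(\rhobar)$ is a \emph{given} representation, so the only freedom is base change. What the paper actually does is pick a (highly ramified) totally real $F^+_3/F^+_2$ so that $\Sym^2(\rhobar)|D_v$ becomes trivial for $v\mid p$ after restriction; Moret--Bailly only enters to keep the later base changes disjoint from everything one cares about. Second, for condition~(8) your appeal to ``large residual image'' would show irreducibility of $\rhobar_\pi$, but not of the entire compatible family at all primes $\lambda$; the paper instead cites Theorem~2.2.1 of~\cite{BR}, using that $\rhobar_\pi$ has non-solvable image to rule out $\pi$ being induced from a Hecke character over a solvable extension, from which irreducibility of the full family (and of its restrictions to finite-index subgroups) follows.
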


\begin{proof} Let $F^{+}_3/F^{+}_2$ be a totally real field for which
$\Sym^2(\rhobar)$ becomes completely trivial at all $v|p$. (In general, the field $F^{+}_3$ will
be highly ramified at $p$). Increasing $F^{+}_3$ if necessary, assume that
the restriction of $\Sym^2(\rhobar)$ is unramified outside $v|p$,  and that
there exists a CM extension $F^{}_3/F^{+}_3$ which is totally unramified at all finite places.
By Proposition~3.3.1 of~\cite{BLGGT}, 
$\Sym^2(\rhobar)$ 
 admits minimal ordinary automorphic lifts
over some CM extension $F^{}_4 = F^{+}_4 . F^{}_3$. (Here by minimal we simply mean unramified outside $p$ and
ordinary and crystalline at $v|p$).
(We use the fact that $\rhobar$ is not of dihedral type,
so $\Sym^2(\rhobar)$ is irreducible, and that $p \ge 2(3 + 1)$.)
 The resulting automorphic representation $\pi$ satisfies
condition $(1)$.
Note that $F^{}_4$ may be chosen to be disjoint from any auxiliary field.
This implies that we may construct $F^{}_4$ so that conditions  $(1)$, $(4)$, $(5)$,  $(6)$, $(10)$,
and $(11)$ hold.
To preserve the modularity of $g$, let $l$ denote an auxiliary prime which is totally split in $F^{}_4$, and which admits a prime
$\lambda | l$ in $F^{+}_2$ such that:
\begin{enumerate}
\item $\rho_{g,\lambda}$ is crystalline in the Fontaine--Laffaile range, and the difference of any two weights is at most $l - 2$.
\item $\rhobar_{g,\lambda}$ is absolutely irreducible.
\end{enumerate}
We may now apply Theorem~4.5.1 of~\cite{BLGGT} to deduce the existence of an extension
$F^{}_5 = F^{+}_5 \cdot F^{}_4$ of $F^{}_4$ so that $\pi$ and $g$ \emph{simultaneously} remain automorphic
over $F^{}_5$ (by base change, if $g$ is automorphic over $F^{}_5$, it is also so over $F^{+}_5$).
Note that the key condition to check in order to apply Theorem~4.5.1 of ~\cite{BLGGT} is potential diagonalizability. This
is easy by construction:  the $p$-adic Galois
representations associated to $\pi$ are ordinary for all $v|p$, and the $\lambda$-adic Galois representations associated
to $g$ are crystalline in the Fontaine--Laffaile range, and $\lambda | l$ is unramified in $F^{}_4$.
Thus we can ensure that
condition $(7)$ holds.
It is also easy to see that $F^{}_5$ can be chosen to preserve the conditions that we have already established above.
Note that the amount freedom allowed in choosing the
Hodge type is equivalent to the freedom to choose
$\mu$ in Proposition~3.2.1 of~\cite{BLGGT}, and it is easy to find a choice of weight such that $\pi$ to
satisfy conditions $(2)$ and $(3)$.  In particular, one could  let $\w$ be any regular Hodge
type which is independent of the choice of embedding $F^{}_5 \rightarrow \Qbar_p$ and for which
there are sufficiently large gaps between non-zero terms of $\mathrm{gr}^i(\w)$.
 To verify that the compatible system associated to $\pi$ is  irreducible over
 any finite index subgroup of $G_{F_5}$ 
 (and so verifies condition $(8)$),
we invoke Theorem~2.2.1 of~\cite{BR}. It suffices to note that $\rhobar_{\pi}$
has non-solvable image, and thus $\pi$ is not induced from an algebraic Hecke
character over a solvable extension.
Finally, we must show that
$\pi$ can be chosen to satisfy $(9)$.
The slopes of Frobenius of an ordinary crystalline representation are given
by the breaks in the Hodge filtration. In particular, we may choose a $\w$ so 
that the integers $i$ such that  $\mathrm{gr}^i \Sym^2(\w) \ne 0$ each differ by
$\ge 4$. If $\Sym^2(\rho_g)$ is potentially semistable but not potentially crystalline,
it follows from Lemma~\ref{lemma:smooth} that for such a $\w$ that
\emph{any} such tensor product $\rho_{\pi} \otimes \Sym^2(\rho_g)$ will be very smooth.
Thus, choosing $\w$ appropriately,  very smoothness is automatically satisfied.
 \end{proof}

\medskip

We may now construct a Hilbert modular
form $h$ as follows.

\begin{prop} Let $p > 5$. There exists a totally real field $F^{+}_6/F^{+}_5$
and a Hilbert modular form $h$ over   $F^{+}_6$ with a corresponding Galois representation  $\rho_h: G_{F^{+}_6} \rightarrow \GL_2(\Qbar_p)$  such that:
\label{prop:second}
\begin{enumerate}
\item For every $v|p$, $\rho_h |D_v$ is ordinary and crystalline with Hodge type $\w$.
\item The residual representation $\rhobar_{h}:G_{F^{+}_6} \rightarrow \GL_2(\F)$ is isomorphic to the restriction of
$\rhobar_g$. 
\item The images of  $\rhobar$ and $\rhobar_{g}$ remain unchanged upon
restriction to $G_{F^{+}_6}$.
\item The Hilbert modular form $g$ remains modular over $F^{+}_6$, and the
RACDSC representation $\pi$ remains modular over $F^{}_6= F^{+}_6 . F^{}_5$.
\item For all places $v$ not dividing $p$,
$\rho_{h} |D_v \sim \rho |D_v$.
\item $F^{}_6 \cap \Q(\zeta_p) = \Q$.
\end{enumerate}
\end{prop}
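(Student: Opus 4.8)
The plan is to obtain $h$ as a \emph{modular} lift of $\rhobar_g$ which is ordinary crystalline of the prescribed Hodge type $\w$ at every $v|p$, agrees locally with $\rho$ away from $p$, and is defined over a totally real field $F^{+}_6/F^{+}_5$ chosen so that conditions~(3), (4), (6) persist. This is parallel to the construction of the form $f$ in Proposition~\ref{prop:res2}, but it takes place in the \emph{ordinary} world over a field in which $p$ need \emph{not} split completely (recall $F^{+}_5 \supseteq F^{+}_3$, and $F^{+}_3$ is ramified at $p$); in particular the Kisin-style patching argument used in Proposition~\ref{prop:kisin} is unavailable, and one must instead invoke the ordinary Serre-weight and automorphy results of~\cite{snow} together with an ordinary modularity lifting theorem.

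First I would fix the local conditions. At $v|p$: since $p$ splits in $F^{+}_2$, the restriction $\rhobar_g|D_v \simeq \rhobar|D_v$ is effectively a representation of $G_{\Q_p}$, and by hypothesis~(3) of Theorem~\ref{theorem:even2} it is not a twist of $\left(\begin{smallmatrix}\epsp & * \\ 0 & 1\end{smallmatrix}\right)$; the Hodge type $\w$ was already chosen in Proposition~\ref{prop:ord} to be regular, embedding-independent, and very generic (large gaps in $\mathrm{gr}^i$), and for such $\w$ one checks that $\w$ is an ordinary Serre weight of $\rhobar_g|D_v$ and that the associated ordinary crystalline lifting ring is non-empty and formally smooth. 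Away from $p$: after a preliminary totally real base change I would arrange that at each finite place at which $\rho$ is (Steinberg-)ramified the local behaviour is minimal enough that $\rho|D_v$ lies on a component of the framed local deformation ring of $\rhobar_g|D_v$ that $\rho_h$ can also meet — in particular killing the $\F$-valued ramification of a Steinberg extension class over a suitable totally ramified extension — so that the relation $\rho_h|D_v \sim \rho|D_v$ of condition~(5) becomes attainable; at the remaining finite places away from $p$ one simply requires $\rho_h$ to be unramified.

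Next I would build the global lift and establish its modularity. Since $\rhobar_g|_{G_{F^{+}_5}}$ is modular (the form $g$, modular over $F^{+}_5$ by Proposition~\ref{prop:ord}(7)), has image containing $\SL_2(\F_p)$ (Proposition~\ref{prop:ord}(5)), is totally odd, and has admissible local behaviour at every place, the ordinary analogue of Theorem~5.1.1 of~\cite{snow} — applied with the definite ordinary type corresponding to $\w$ at the places above $p$ and the chosen tame conditions elsewhere — yields, after a further totally real base change $F^{+}_6/F^{+}_5$, a Hilbert modular form $h$ over $F^{+}_6$ with $\rhobar_h \simeq \rhobar_g|_{G_{F^{+}_6}}$, with $\rho_h|D_v$ ordinary crystalline of Hodge type $\w$ for all $v|p$, and with the prescribed tame behaviour; the modularity of $h$ then follows from an ordinary modularity lifting theorem (of Skinner--Wiles / Geraghty type), for which the bigness of $\mathrm{im}(\rhobar_g)$ with $p>5$ and the modularity of the residual representation suffice. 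An alternative is to first construct such a lift $\rho_h$ of $\rhobar_g$ by a Ramakrishna/Khare--Wintenberger Galois-cohomology argument — the essential input being that each local deformation ring is non-empty of the expected dimension, with hypothesis~(3) of Theorem~\ref{theorem:even2} used precisely to control the places $v|p$ — then strip off the auxiliary ramification thereby introduced by a base change supplied by Theorem~\ref{theorem:MB}, and finally apply the modularity lifting theorem.

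For the bookkeeping conditions, I would take each base-change field, via Theorem~\ref{theorem:MB}, linearly disjoint over $F^{+}_5$ from the compositum of $F^{}_5(\zeta_p)$, $\Q(\ker\rhobar)$ and $\Q(\ker\rhobar_g)$; this secures conditions~(3) and~(6) and prevents the images of $\rhobar$ and $\rhobar_g$ from shrinking. To keep $g$ automorphic over $F^{+}_6$ and $\pi$ automorphic over $F^{}_6 = F^{+}_6 \cdot F^{}_5$, I would argue exactly as in the proof of Proposition~\ref{prop:ord}, invoking a simultaneous potential-automorphy/base-change result (Theorem~4.5.1 of~\cite{BLGGT}) — using that $\pi$ is ordinary at $p$ and $g$ is crystalline in the Fontaine--Laffaille range at a suitable auxiliary prime, hence potentially diagonalizable — or simply noting that automorphy descends along solvable extensions. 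The principal obstacle, as I see it, is the heart of the third step: producing not a merely potentially-modular lift but a genuine Hilbert modular form of the \emph{prescribed} Hodge type $\w$ with the prescribed tame behaviour. The weight-$\w$ ordinary crystalline local deformation ring at $v|p$ is well-behaved only because of hypothesis~(3) of Theorem~\ref{theorem:even2}, and reconciling the Steinberg primes of $\rho$ with the requirement $\rho_h|D_v \sim \rho|D_v$ while respecting all the disjointness constraints is the fussiest part of the argument.
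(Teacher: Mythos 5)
Your overall strategy is structurally the same as the paper's: produce an ordinary crystalline lift of $\rhobar_g$ with prescribed Hodge type $\w$ at $v|p$ and prescribed ramified semi-stable behaviour at the places $v\nmid p$ where $\rho$ ramifies, verify it is modular, and then use Theorem~4.5.1 of~\cite{BLGGT} to simultaneously base change so that $g$ and $\pi$ remain automorphic while the images of $\rhobar$ and $\rhobar_g$ are preserved and $F^{}_6 \cap \Q(\zeta_p)=\Q$. The point of divergence is the mechanism by which you produce the form $h$ itself. The paper modifies the construction of Proposition~\ref{lemma:first} (taking $F^{+}_3$ so that $\rhobar_g$ is ordinary at $v|p$), cites Theorem~6.1.9 of~\cite{BLGGtwo} to produce a suitable \emph{modular} ordinary lift in weight~$0$ with the desired local behaviour away from $p$, and then invokes Hida theory to slide this lift to the weight $\w$ --- after which the lift is already a Hilbert modular form and one only needs~\cite{BLGGT} Theorem~4.5.1 for the simultaneous automorphy of $\pi$ and $g$. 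You instead propose either an ``ordinary analogue of Theorem~5.1.1 of~\cite{snow}'' or a Ramakrishna/Khare--Wintenberger Galois-cohomological lifting followed by an ordinary $R=\mathbf{T}$ theorem. The first of these is not a result you can cite off the shelf: Theorem~5.1.1 of~\cite{snow} is formulated in the crystalline Fontaine--Laffaille setting over a field in which $p$ splits completely, which fails here since $F^{+}_5 \supseteq F^{+}_3$ is ramified at $p$; extending it to the ordinary setting over a ramified base would itself require a nontrivial argument. The Ramakrishna-then-$R=\mathbf{T}$ route is plausible but leaves more to check (non-emptiness and formal smoothness of the relevant local ordinary deformation rings at $v|p$, plus a usable $R=\mathbf{T}$ theorem at the introduced auxiliary level) than the paper's route, which sidesteps $R=\mathbf{T}$ entirely by producing $h$ as a bona fide modular form from the start via~\cite{BLGGtwo} and Hida theory. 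You also omit the Hida-theory step that the paper uses to get from weight~$0$ to weight $\w$; if you go through a pure lifting argument you must at least explain how you realize the exact Hodge type $\w$. Finally, a small point: you lean on hypothesis~(3) of Theorem~\ref{theorem:even2} to control the $v|p$ local deformation ring, but that hypothesis was consumed in Proposition~\ref{prop:kisin} (where the faithfulness of $M_\infty$ was required); what matters here is simply that $\rhobar_g$ becomes ordinary at $v|p$ after the base change and has large image, which is what the paper arranges directly.
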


\begin{proof} We may prove this by modifying the proof of Proposition~\ref{lemma:first} as follows.
Modify the field $F^{+}_3$ so that $\rhobar_{g}$ is ordinary at all $v|p$ (but still has large image),
and such that $\rhobar_{g} |D_v$ admits a  modular lift which is ramified
and semi-stable lift for those primes $v \nmid p$ which ramify
in $\rho$, ordinary for $v|p$, and unramified everywhere else. 
The existence of such a lift follows from Theorem~6.1.9 of~\cite{BLGGtwo}, at least in the weight $0$ case --- the existence of a lift
in weight $\w$ then follows from Hida theory.
 Then let $F^{+}_6$
 denote a field
for which (using Theorem~4.5.1 of~\cite{BLGGT} again) we can simultaneously establish the modularity of this
lift (which will correspond to $h$), as well as the modularity of the $p$-adic and $\lambda$-adic Galois
representations associated to $\pi$ and $g$  respectively.
 (It may have been  more consistent to
have combined  Propositions~\ref{prop:ord} and~\ref{prop:second} into a single Lemma, but
it would have been more unwieldy.)
\end{proof}

\section{The proof of Theorem~\ref{theorem:even2}}

Let $L/\Q$ denote a field which contains the coefficient field of $g$ and $\pi$.
Let $(L,\rho_{g,\lambda})$ denote the compatible family of Galois representations associated to $g$.
There exists a prime of $\OL_F$ dividing $p$ such that the corresponding mod-$p$
representation is  $\rhobar_g$, which, by construction, has non-solvable image. It follows
that the form $g$ does not have complex
multiplication, and hence the images of $\rho_{g,\lambda}$ for all $\lambda$ contains an open subgroup
of $\SL_2(\Z_{l})$ where $\lambda | l$. 
 
\begin{prop} There exists a totally real field $F^{+}_7/F^{+}_6$, a quadratic CM extension $F^{}_7/F^{+}_7$
and a RACDSC automorphic representation $\Pi$ of $\GL(9)/F^{}_7$ such that:
\begin{enumerate}
\item The compatible family of Galois representations associated to $\Pi$ is the restriction
to $G_{F^{}_7}$ of the  family
$$(L,\Sym^2(\rho_{g,\lambda}) \otimes \rho_{\pi,\lambda}).$$
\item The images of  $\rhobar$ and $\rhobar_{h}$ remain unchanged upon
restriction to $G_{F^{}_7}$.
\end{enumerate}
\end{prop}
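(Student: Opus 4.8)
The plan is to produce $\Pi$ by proving that $r := \Sym^2(\rho_{g,\lambda}) \otimes \rho_{\pi,\lambda}$ is automorphic over a suitable CM field $F^{}_7/F^{+}_6$ and then invoking Theorem~\ref{theorem:MB} to arrange that $F^{}_7$ (together with its maximal totally real subfield $F^{+}_7$) also satisfies the linear disjointness needed for~(2). Write $\rbar = \Sym^2(\rhobar_g) \otimes \rhobar_\pi$ for the residual representation of $r$. First I would record the structural hypotheses any automorphy lifting theorem demands: since $\rho_\pi$ is conjugate self-dual ($\pi$ being RACSDC) and $\Sym^2(\rho_g)$ carries a natural symmetric autoduality, $r$ is polarizable over any CM field; since $\rho_g$ has the same Hodge type $\v$ as $\rho$ while $\rho_\pi$ has Hodge type $\Sym^2(\w)$, condition~(3) of Proposition~\ref{prop:ord} says precisely that $r$ is regular at every $v \mid p$ once $\w$ is chosen with large enough gaps; $\Sym^2(\rho_g)$ is irreducible (as $g$ is non-dihedral, $\rhobar_g$ having image containing $\SL_2(\F_p)$) and $\rho_\pi$ is irreducible on every finite-index subgroup by Proposition~\ref{prop:ord}(\ref{part8}), and the linear disjointness built into Propositions~\ref{prop:res}--\ref{prop:second} forces $\rbar$ to be adequate on $G_{F^{}_7(\zeta_p)}$ once $p > 7$.

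Next I would establish residual automorphy of $\rbar$ over $F^{}_7$. The key observation is that $\rhobar_h \simeq \rhobar_g$ (Proposition~\ref{prop:second}), so $\rbar$ is also the reduction of $\Sym^2(\rho_h) \otimes \rho_\pi$. Because $h$ is a Hilbert modular form, $\Sym^2(\rho_h)$ is the Galois representation of a cuspidal automorphic representation of $\GL(3)$ (Gelbart--Jacquet; cuspidal since $h$ is non-dihedral), and $\rho_\pi$ is automorphic on $\GL(3)$; as both $\Sym^2(\rho_h)$ and $\rho_\pi$ are ordinary and crystalline at all $v \mid p$, their tensor product is ordinary and crystalline, hence potentially diagonalizable. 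By the potential automorphy methods of~\cite{BLGGT}, $\Sym^2(\rho_h) \otimes \rho_\pi$ is then automorphic over some CM field, and I would construct $F^{}_7$ (using Theorem~\ref{theorem:MB}) so as to contain it; hence $\rbar|_{G_{F^{}_7}}$ is residually automorphic. (Alternatively, $r$ is a member of the compatible system $(L, \Sym^2(\rho_{g,\lambda}) \otimes \rho_{\pi,\lambda})$, and one may apply potential automorphy at the auxiliary prime $l$ of Propositions~\ref{prop:ord} and~\ref{prop:second}, where $g$ and $\pi$ are both Fontaine--Laffaille, so that the $l$-adic member is potentially diagonalizable with residually large image.)

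Finally I would apply the automorphy lifting theorem (Theorem~\lifting of~\cite{BLGGT}, in the strongly connected form recalled in \S\lifta--\S\liftb of~\cite{BLGGT} and in Section~2 above) to conclude that $r|_{G_{F^{}_7}}$ is itself automorphic: polarizability, regularity, irreducibility and adequacy are the first paragraph, residual automorphy is the second, and away from $p$ the representation $r$ is unramified (both $\rho_g$ and $\rho_\pi$ being so), so only minimal local conditions arise there. The delicate input is the local condition at $v \mid p$: the type of $r|D_v$ is $\Sym^2(\tau) \otimes \Sym^2(\mathds{1})$ and its Hodge type is $\Sym^2(\v) \otimes \Sym^2(\w)$, and I must know that $r|D_v$ lies on a component of the corresponding local deformation ring realized by an automorphic representation, being itself a (very) smooth point there; when $\rho_g|D_v$ is potentially crystalline this happens on the formally smooth potentially crystalline deformation ring, and when it is not, condition~(9) of Proposition~\ref{prop:ord} --- which rests on Lemma~\ref{lemma:smooth} and Example~\ref{example:very} --- supplies the very smoothness that makes the strongly connected version of the lifting theorem applicable. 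This yields the desired RACSDC automorphic representation $\Pi$ of $\GL(9)/F^{}_7$, and choosing $F^{}_7/F^{+}_6$ linearly disjoint (via Theorem~\ref{theorem:MB}) from the fixed fields of $\rhobar$ and $\rhobar_h$ gives~(2). The hard part is exactly this local analysis at $v \mid p$: since one cannot establish that $\rho$, hence $\rho_g$, is potentially diagonalizable, the naive automorphy lifting theorem does not apply, and it is the whole point of the shadow construction --- producing $\rho_g$, $\rho_h$ and $\rho_\pi$, and exploiting the very smoothness of symmetric powers of non-crystalline representations --- to place $r$ on a component of the $\GL(9)$ local deformation ring that is visibly automorphic.
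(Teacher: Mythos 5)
Your proposal takes a genuinely different route from the paper, and that route contains real gaps. The paper proves potential automorphy of the family $(L,\Sym^2(\rho_{g,\lambda}) \otimes \rho_{\pi,\lambda})$ by a direct appeal to Theorem~A of~\cite{BLGGT}, which requires only that the compatible system be regular, essentially self-dual (automatic, since $n=9$ is odd, so orthogonal), and irreducible. There is no residual automorphy hypothesis to supply and no lifting theorem to run at this stage; the lifting theorem (Theorem~\ref{theorem:semistable}) is deliberately reserved for the subsequent step comparing $\Pi$ with $\vrho = \Sym^2(\rho) \otimes \Sym^2(\rho_h)$, which is engineered to have the same Hodge type as $\Sym^2(\rho_g)\otimes\rho_\pi$.

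Two of your steps do not hold up. First, you deduce irreducibility of $\Sym^2(\rho_{g,\lambda}) \otimes \rho_{\pi,\lambda}$ from irreducibility of the factors; this is false in general (consider $V \otimes V^{\vee}$). The irreducibility argument is the real content of the paper's proof: since $g$ has no CM, the Zariski closure of the image of $\Sym^2(\rho_{g,\lambda})$ is $\PGL(2)$, and with $\rho_{\pi,\lambda}$ irreducible the tensor product can fail to be irreducible only if $\rho_{\pi,\lambda}$ is a twist of $\Sym^2(\rho_{g,\lambda})$; this is then excluded using strong multiplicity one for $\GL(3)$ together with the parity observation that $\rhobar_{\pi}$ extends to the even representation $\Sym^2(\rhobar)$ while $\Sym^2(\rhobar_g)$ extends to the totally odd $\Sym^2(\rbar_{\residual})$, so they cannot be twists. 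Second, your proposed residually automorphic input, $\Sym^2(\rho_h) \otimes \rho_{\pi}$, cannot be the hypothesis of a lifting theorem for $\Sym^2(\rho_g) \otimes \rho_{\pi}$: its Hodge type at $v|p$ is $\Sym^2(\w) \otimes \Sym^2(\w)$, which has repeated Hodge--Tate weights and hence is not even regular, so it does not arise from an RACSDC automorphic representation at all; whereas the target has Hodge type $\Sym^2(\v) \otimes \Sym^2(\w)$, and the regularity requirement of Proposition~\ref{prop:ord}(3) forces $\v \ne \w$. Representations of different Hodge type lie in different deformation rings, so the requisite $\rightsquigarrow$ cannot hold. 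Your parenthetical alternative (potential automorphy at the Fontaine--Laffaille prime $l$) is closer in spirit to the paper's argument, but it still requires the irreducibility argument above, which you omit.
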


\begin{proof}  The
 compatible system is essentially self-dual, orthogonal (automatically since $n = 9$ is odd), and  has distinct Hodge--Tate weights 
(by assumption $3$ of Lemma~\ref{lemma:first}). Let us verify that it is 
irreducible. Since $g$ does not have CM (as $\rhobar_{g}$ is not dihedral), the Zariski closure of $\Sym^2(\rho_{g,\lambda})$ is $\PGL(2)$.
Since $\rho_{\pi,\lambda}$ is irreducible, It follows that the tensor product will be irreducible unless
$\rho_{\pi,\lambda}$ is  a twist of $\Sym^2(\rho_{g,\lambda})$.
By multiplicity one~\cite{J} for $\GL(3)$, we deduce that $\Sym^2(g)$ is a twist of
$\pi$. This contradicts the fact that the  mod-$p$ residual
representations representations $\Sym^2(\rhobar_{g})$
and $\rhobar_{\pi}$ are not twists of each other, since one extends to a totally odd representation
($\Sym^2 \rbar_{\residual}$) 
of $G_{F^{+}_1}$ and the other to a representation ($\Sym^2(\rhobar)$) of $G_{F^{+}_1}$ which is even at some infinite place.
The potential automorphy follows from Theorem~A of~\cite{BLGGT} (as follows from the proof of that theorem, the
field $F^{+}_7$ can be chosen to be disjoint from any finite auxiliary field, establishing condition $(2)$).
\end{proof}

\medskip

Let us now write $E^{+} = F^{+}_7$ and $E = F^{}_7$, and consider the
representations $\rho$, $\rho_g$, $\rho_{\pi}$, and $\rho_{h}$ as representations
of $G_{E}$. 
Without loss of generality, we may assume that $\rho$ is even for at least
one real place of $F^{+}$ (and hence also of $E^{+}$).
Let us consider the representation
$$\vrho: \Sym^2(\rho) \otimes \Sym^2(\rho_h): G_{E} \rightarrow \GL_9(\Qbar_p).$$
By construction, we observe that
$\vrhobar = \Sym^2(\rhobar) \otimes \Sym^2(\rhobar_h) = \rhobar_{\pi} \otimes \Sym^2(\rhobar_g) = \rhobar({\Pi})$ 
is residually modular. 
Moreover, we find that
$$ \rho(\Pi)|D_v =  \rho_{\pi} \otimes \Sym^2(\rho_g) | D_v  \rightsquigarrow   \Sym^2(\rho) \otimes \Sym^2(\rho_h) |D_v
= \vrho |D_v$$
for all $v$, with the possible exception of $v|p$.
 By
 Lemma~3.4.3 of~Geraghty~\cite{G}, the ordinary deformation rings are smooth and connected
 (by Lemma~\ref{prop:ord}, $(11)$, 
 $\rhobar_{\pi}|D_v$ is trivial for $v|p$), 
 and hence, for $v|p$, 
 $$ \rho_{\pi}|D_v \sim \Sym^2(\rho_h) |D_v.$$
 On the other hand, by construction (Proposition~\ref{prop:kisin} $(2)$), we also have (for $v|p$)  that
 $$\Sym^2(\rho_g) |D_v \zap \Sym^2(\rho) |D_v.$$
If $\rho|D_v$ is potentially crystalline, then all four representations are potentially
crystalline at $v$, and
we deduce that 
$$\rho_{\pi} \otimes \Sym^2(\rho_g)|D_v \sim \Sym^2(\rho) \otimes \Sym^2(\rho_h) |D_v.$$
On the other hand, if $\rho|D_v$ is not potentially crystalline, then neither is
$\rho_g|D_v$, and we deduce from condition $(9)$ of Lemma~\ref{lemma:smooth} 
that the left hand side corresponds to a very smooth point of the corresponding
local deformation ring $\Spec(R^{\Box,\Sym^2(\tau) \otimes \Sym^2(\mathds{1}), \Sym^2 \v \otimes \Sym^2 \w}[1/p])$.
Hence
$$\rho(\Pi)|D_v = 
\rho_{\pi} \otimes \Sym^2(\rho_g)|D_v \zap \Sym^2(\rho) \otimes \Sym^2(\rho_h)|D_v
= \vrho|D_v,$$
and thus $\rho(\Pi)|D_v \rightsquigarrow \vrho|D_v$.
We now prove that $\vrhobar$ has adequate image (in the sense of~\cite{Thorne}).
Recall that the fixed fields corresponding to $\Sym^2(\rhobar)$ and $\Sym^2(\rhobar_h)$ are
disjoint by construction.
The images of $\Sym^2(\rhobar_h)$ is either $\PSL_2(\F)$ or
$\PGL_2(\F)$, by construction. Since $\PSL_2(\F)$ is simple,  by Goursat's Lemma, the image of
$\vrhobar$ contains (with index at most two) the direct product of the image of $\Sym^2(\rhobar)$
and $\PSL_2(\F)$. 
 Since both $\Sym^2(\rhobar)$
and $\PSL_2(\F)$ have adequate image, their direct product satisfies the cohomological condition
of adequateness, since $H^1(\Gamma \times \Gamma',V \otimes V') = 0$ whenever
$H^1(\Gamma,W) = H^1(\Gamma',W') = 0$ by inflation restriction
(the irreducible constitutents of $\Sym^2(V \otimes V')$ are of the form $W \otimes W'$ where $W$ and $W'$
are irreducible constituents of $\Sym^2(W)$ and $\Sym^2(W')$ respectively).
Since the image of $\vrhobar$ contains this product with index dividing two, its image
still satisfies the cohomological condition of adequateness, by inflation-restriction (and
the fact that $p \ne 2$).
 We deduce that $\vrhobar$ is adequate
 by Lemma~2(ii) of~\cite{Herzig}.
It follows from Theorem~\ref{theorem:semistable} below
(c.f.
 Theorem~\liftingflat  of~\cite{BLGGT})
that
$\vrho$
  is modular over $E$. 
(Since $n = 9$ is odd, the slightly regular condition is vacuous.)
Since the Galois representations $\rho$ and $\rho_h$ extend to the totally real subfield $E^{+}$,
so does the representation $\vrho$, and hence  (by~\cite{AC}) $\vrho$ comes from
a RAESDC representation for $\GL(9)/E^{+}$. By assumption, however, there exists a real
place of $E^{+}$ and a correponding complex conjugation $c_v \in G_{E^{+}}$ such that
$\rho(c_v)$ is a scalar. It follows that the image $\vrho(c_v)$ of $c_v$ 
is conjugate to
$$\left(\begin{matrix} 1 & & \\ & 1 & \\ & & 1 \end{matrix} \right) \otimes
\left(\begin{matrix} 1 & & \\ & -1 & \\ & & 1 \end{matrix} \right),$$
which has trace $+3$. Since the representation  $\vrho$ is irreducible,
 automorphic,  and $9$ is odd, this contradicts the main theorem (Proposition~A) of~\cite{TP}, which says that such
representations must be ``odd'' in the sense that the trace of complex conjugation must be $\pm 1$.
This completes the proof of Theorem~\ref{theorem:even2}.

\medskip

\begin{remark} \emph{In~\cite{CalegariFM}, we proved (Theorem~1.3)
that for an imaginary quadratic field $K$, 
ordinary representations $\rho: G_K \rightarrow \GL_2(\Qbar_p)$ (satisfying
certain supplementary hypotheses) had parallel weight. One may ask
whether the methods of this paper can be used to generalize that result
(presuming that $p$ splits in $K$). Starting with the tensor representation
$\psi = \rho \otimes \rho^c$, one is lead to a $16$ dimensional representation
$\vrho = \psi \otimes \rho_f \otimes \rho_g$ for non-CM Hilbert modular forms
$f$ and $g$ which one may show is  modular for $\GL(16)$ over some
totally real field $E^{+}$. It is not apparent, however, how this
might lead to a contradiction, since $\vrho(c)$ has trace $0$ for any
complex conjugation $c$, and known cases of functoriality do not
yet allow one to deduce the modularity
of $\psi$ from the modularity of $\vrho$. Another approach is to consider
the representation $\psi = \Sym^2 \rho \otimes \Sym^2 \rho^c$, and a corresponding
$81$ dimensional representation
$\vrho = \psi \otimes \Sym^2 \rho_f \otimes \Sym^2 \rho_g$. In this case, one
should be able to deduce that $\vrho$ is modular for $\GL(81)/E^{+}$ for some totally
real field $E^{+}$, and that $\vrho(c)$ is conjugate to
{\small
$$ \left( \begin{matrix}
1 & 0 & 0 & & & & & \\
0 & 1 & 0 & & & & & \\
0 & 0 & 1 & & & & & \\
& & & 0 & 1 & & & & \\
& & & 1 & 0 & & & & \\
& & & & & 0 & 1 & & \\
& & & & & 1 & 0 & & \\
& & & & & & & 0 & 1 \\
& & & & & & & 1 & 0 \end{matrix} \right)
\otimes \left(\begin{matrix} 1 & & \\ & -1 & \\ & & 1 \end{matrix} \right)
\otimes \left(\begin{matrix} 1 & & \\ & -1 & \\ & & 1 \end{matrix} \right),$$
}
which has trace $3$, contradicting the main theorem of~\cite{TP}.
However, this approach only works when $\psi$ has distinct Hodge--Tate
weights, and correspondingly one may only deduce (if $\rho$ has
Hodge--Tate weights $[0,m]$ and $[0,n]$ with $n \ge m$ positive) that
either $n = m$ (which is the expected conclusion) \emph{or} $n = 2m$.}
\end{remark}

\section{Applications to Universal Deformation Rings}
\label{section:even}

If $F/\Q$ is a number field,
and $\rhobar:G_F \rightarrow \GL_n(\F)$ is an irreducible continuous Galois representation, let
$\X = \X_S(\rhobar)$ denote the rigid analytic space corresponding to the universal deformation ring of $\rhobar$
unramified outside a finite set of primes $S$.
The space $\X$ contains a (presumably countable) set of points which are de Rham ($=$ potentially semi-stable)
at all places above $p$.
We call such deformations \emph{geometric}, and denote the corresponding set of points
by $\Xgeom$.
Gouv\^{e}a  and Mazur, using a beautiful construction they called the infinite fern,
showed (under the assumption that $\rhobar$ was unobstructed) that when $F = \Q$, $n = 2$, and $\rhobar$ is odd that
$\Xgeom \subset \X$ is  Zariski dense (see~\cite{GM}). This raises the general question of when
$\Xgeom$ is Zariski dense in $\X$. The work of Gouv\^{e}a and Mazur has been extended
(in the same setting) by others, in particular B\"{o}ckle~\cite{Bo}
(cf.~Theorem~1.2.3 of~\cite{EmertonFM}). Chenevier~\cite{Chen} has recently
generalized the infinite fern argument to apply to certain conjugate self dual representations
for $n \ge 3$ over CM fields, and has shown that the Zariski closure of $\Xgeom$ is (in some precise sense)
quite large.
As a consequence of our main result,  however, we prove the following theorem.

\begin{theorem} Let $p > 7$, let $F^{+}/\Q$ be a totally real field in which $p$ splits completely, \label{theorem:empty}
and let $\rhobar: G_{F^{+}} \rightarrow \GL_2(\F)$ be a continuous irreducible representation
whose image contains $\SL_2(\F_p)$.
Suppose that $\rhobar$ is even for at least one real place of $F^{+}$.
Suppose that  for all $v|p$, 
$$\rhobar|{I_v} \sim \left(\begin{matrix}  \psi_v & * \\ 0 &  1 \end{matrix} \right)$$
where $\sim$ denotes up to twist, and $\psi_v \ne \epsp$ is assumed to have order $> 2$, and $* \ne 0$. 
Let $S$ be any finite set of places of $F^{+}$.
Then $\Xgeom$ is empty, that is, 
$\rhobar$ has no geometric deformations.
\end{theorem}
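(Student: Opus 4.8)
The plan is to derive Theorem~\ref{theorem:empty} as a direct consequence of Theorem~\ref{theorem:even2} (equivalently Theorem~\ref{theorem:even}), by showing that any geometric deformation of $\rhobar$ would supply exactly the kind of even, potentially semi-stable representation with distinct Hodge--Tate weights that Theorem~\ref{theorem:even2} forbids. So suppose for contradiction that $\rho \in \Xgeom$ is a geometric deformation, i.e.\ a continuous $\rho: G_{F^{+}} \to \GL_2(\Qbar_p)$, unramified outside the finite set $S$, which is de Rham (potentially semi-stable) at every $v|p$, and which lifts $\rhobar$. I would first address the possibility that the Hodge--Tate weights of $\rho|D_v$ are \emph{not} distinct: if $\rho|D_v$ has equal Hodge--Tate weights for some $v|p$, then $\rho|D_v$ is potentially of Hodge--Tate type $(0,0)$, hence (after a finite base change and twist) has finite image on inertia, which forces $\rhobar|I_v$ up to twist to be finite — contradicting the hypothesis that $\rhobar|I_v \sim \bigl(\begin{smallmatrix} \psi_v & * \\ 0 & 1\end{smallmatrix}\bigr)$ with $\psi_v$ of order $>2$ and $*\neq 0$ (which is in particular ramified with infinite image after any twist when $* \ne 0$, or at least not of the shape compatible with potential good reduction in weight $0$). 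Thus the Hodge--Tate weights of $\rho$ are automatically distinct at every $v|p$, giving condition~(1) of Theorem~\ref{theorem:even2}.

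Next I would check the remaining hypotheses of Theorem~\ref{theorem:even2} for $\rho$. Condition~(2), that $\Sym^2\rhobar|_{G_{F^{+}(\zeta_p)}}$ is irreducible: since the image of $\rhobar$ contains $\SL_2(\F_p)$ and $p > 7$, the image of $\rhobar|_{G_{F^{+}(\zeta_p)}}$ still contains $\SL_2(\F_p)$ (as $\SL_2(\F_p)$ is perfect and $F^{+}(\zeta_p)/F^{+}$ is abelian, so the intersection of $\mathrm{image}(\rhobar)$ with $\SL_2(\F_p)$ remains all of $\SL_2(\F_p)$, possibly up to the usual small-index subtleties already handled via $\SL_2(\F_p)$ being perfect for $p>3$), whence $\Sym^2$ of this image is irreducible — here I use $p>7$ to rule out the sporadic failures. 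Condition~(3), that $\rhobar|D_v$ is not a twist of $\bigl(\begin{smallmatrix}\epsp & * \\ 0 & 1\end{smallmatrix}\bigr)$: this is built into the hypothesis of Theorem~\ref{theorem:empty}, since we assumed $\psi_v \neq \epsp$ (and $\psi_v$ has order $>2$, so $\rhobar|D_v$ is genuinely non-split of this shape with a character that is not the cyclotomic character). Finally, $p > 7$ is part of the hypothesis, and $F^{+}$ is totally real with $p$ split completely, as required.

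Having verified all the hypotheses, Theorem~\ref{theorem:even2} applies to $\rho$ and yields that $\rho$ is odd at every real place of $F^{+}$. But $\rho$ reduces to $\rhobar$, which by hypothesis is even at at least one real place of $F^{+}$; reduction mod $p$ sends $\rho(c)$ to $\rhobar(c)$ and preserves the determinant and (for $p\neq 2$) the conjugacy class type of an order-$2$ element, so $\rho$ is even at that place as well — a contradiction. Therefore no such $\rho$ exists, i.e.\ $\Xgeom = \varnothing$, independently of the choice of finite set $S$ (the argument uses only that $\rho$ is unramified outside \emph{some} finite set, which is automatic). The main obstacle I anticipate is the careful handling of the equal-weights case in the first step: one must argue cleanly that a potentially semi-stable deformation of a $\rhobar|D_v$ of the prescribed non-crystalline-looking shape cannot have equal Hodge--Tate weights, which amounts to an analysis of the local deformation rings / inertial types at $v|p$ and is where the precise hypotheses $\psi_v$ of order $>2$ and $*\neq 0$ are really used; everything else is a matching-up of hypotheses.
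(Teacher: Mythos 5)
Your overall strategy matches the paper's: derive a contradiction by combining Theorem~\ref{theorem:even2} (applied to a hypothetical $\rho \in \Xgeom$, which is even because $\rhobar$ is) with the local hypothesis at $v|p$. The paper runs the logic in the opposite order (applies Theorem~\ref{theorem:even2} first to conclude some $v|p$ must have equal Hodge--Tate weights, then rules that out), but this is an equivalent rearrangement and not a meaningful difference. Your verification of conditions (2) and (3) of Theorem~\ref{theorem:even2} is fine.

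However, the step you flag as the obstacle is indeed a genuine gap, and the reasoning you offer for it does not work. You write that finiteness of $\rho|I_v$ up to twist ``forces $\rhobar|I_v$ up to twist to be finite --- contradicting the hypothesis.'' But $\rhobar|I_v$ always has finite image (its target $\GL_2(\F)$ is a finite group), so there is nothing to contradict; your parenthetical about $\rhobar|I_v$ having ``infinite image after any twist'' is vacuous. What is actually needed, and what the paper supplies, is the following: if $\rho|D_v$ has equal Hodge--Tate weights, then up to twist $\rho|I_v$ has finite image, hence finite \emph{projective} image, hence (by the classification of finite subgroups of $\PGL_2(\Qbar_p)\simeq\PGL_2(\C)$) projective image cyclic, dihedral, $A_4$, $S_4$, or $A_5$. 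The projective image of $\rhobar|I_v$ is a \emph{quotient} of this and so must also be one of those groups. But the hypothesis $\rhobar|I_v \sim \bigl(\begin{smallmatrix}\psi_v & * \\ 0 & 1\end{smallmatrix}\bigr)$ with $*\ne 0$ and $\psi_v$ of order $>2$ forces the projective image of $\rhobar|I_v$ to contain a normal subgroup of order $p$ (from the nonsplit unipotent part) with a cyclic quotient of order $>2$ acting nontrivially on it --- a nonabelian, non-dihedral group of order divisible by $p>7$, which is none of cyclic, dihedral, $A_4$, $S_4$, $A_5$. This is the contradiction, and it is exactly where the hypotheses $\psi_v\ne\epsp$, $|\psi_v|>2$, $*\ne 0$, and $p>7$ are actually spent. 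Without this classification argument your proof does not close.
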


\begin{proof} Assume otherwise. Let $\rho$ be a point of $\Xgeom$.
By Theorem~\ref{theorem:even2}, there exists at least one $v|p$ such that the Hodge--Tate
weights of $\rho$ at $v$ are equal. To be potentially semi-stable (or even Hodge--Tate)
of parallel weight zero is equivalent to being unramified over a finite extension.
Thus, up to twist, $\rho | I_v$  has  finite image, and, in particular, the projective image of
$\rho | I_v$ is finite.
The only finite subgroups of
$\PGL_2(\Qbar_p) \simeq \PGL_2(\C)$ are either cyclic, dihedral, $A_4$, $S_4$, or $A_5$. Thus, the projective image of $\rhobar |I_v$ must be one of these groups.
By assumption, the projective
image of $\rhobar|I_v$ is a non-dihedral group of order divisible by $p > 7$, hence $\rho|I_v$ can not be finite up to twist either,
and  $\rho$ does not exist.
\end{proof}

We have the following corollary:

\begin{corr} There exist  absolutely irreducible representations $\rhobar$ such that
the subset $\Xgeom \subset \X$ is \emph{not} Zariski dense. In fact,  there exist
representations such that 
$\Xgeom$ is empty, but $\X$ has arbitrary large dimension.
\end{corr}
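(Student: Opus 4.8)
The plan is to deduce this corollary as a straightforward consequence of Theorem~\ref{theorem:empty} together with the existence of a suitable residual representation to feed into it. First I would observe that for Zariski density to fail (indeed, for $\Xgeom$ to be \emph{empty}) it suffices to exhibit a single continuous irreducible $\rhobar: G_{F^{+}} \rightarrow \GL_2(\F)$, for some totally real $F^{+}$ in which $p$ splits completely, satisfying all the hypotheses of Theorem~\ref{theorem:empty}: image containing $\SL_2(\F_p)$, even at one real place, and $\rhobar|I_v$ locally (up to twist) of the form $\left(\begin{smallmatrix} \psi_v & * \\ 0 & 1 \end{smallmatrix}\right)$ with $\psi_v \ne \epsp$ of order $> 2$ and $* \ne 0$, for every $v | p$.

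The key step is therefore the construction of such a $\rhobar$. I would take $F^{+} = \Q$ (in which $p$ splits completely trivially), fix $p > 7$, and build $\rhobar$ by prescribing its local behavior at $p$ and at one auxiliary prime and then invoking a realization result. Concretely, one wants a local representation $G_{\Q_p} \to \GL_2(\F)$ which, restricted to inertia, is a non-split extension of $1$ by a character $\psi_p$ of order $> 2$ with $\psi_p \ne \epsp$; such local representations exist in abundance (for instance by taking $\psi_p$ to be a suitable power of the fundamental character of level $1$, twisted appropriately, and choosing a nonzero class in the relevant $H^1$). One then needs to globalize this to an $\rhobar$ with image containing $\SL_2(\F_p)$ that is moreover even at the archimedean place. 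Evenness combined with large image is the delicate point: I would either appeal directly to Proposition~\ref{prop:galois} (with $G = \SL_2(\F_p)$ or $\GL_2(\F)$ acting on itself, imposing the prescribed local decomposition group at $p$ and $c_v = 1$ at the real place to force evenness), or else cite a known unconditional construction of even Galois representations with large image and prescribed ramification at $p$. Since $p > 7$, the order and non-dihedral conditions on the projective image are automatic once $\SL_2(\F_p)$ is in the image.

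For the ``arbitrarily large dimension'' clause, I would note that the dimension of $\X = \X_S(\rhobar)$ grows with the size of $S$: enlarging $S$ by adding primes at which $\rhobar$ is unramified adds (at least) local deformation directions, so by choosing $S$ large enough the universal deformation ring — equivalently the rigid space $\X$ — has dimension exceeding any prescribed bound, while Theorem~\ref{theorem:empty} still applies verbatim for that same $S$ to give $\Xgeom = \emptyset$. One can make this quantitative via the standard lower bound $\dim \X \ge 1 + \sum_{v \in S} \dim H^0(G_v, \ad\rhobar) - \dim H^0(G_\Q, \ad\rhobar(1))$ or simply by the presence of an unrestricted framed local deformation ring at each added prime, but no precise count is needed.

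The main obstacle is the construction of the global residual representation that is simultaneously even, has image containing $\SL_2(\F_p)$, and has the prescribed (twist of) non-split reducible, non-cyclotomic, order-$>2$ behavior at every prime above $p$; once such a $\rhobar$ is in hand, the corollary follows immediately from Theorem~\ref{theorem:empty} and an elementary remark about the dependence of $\dim \X$ on $S$.
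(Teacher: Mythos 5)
Your plan of deducing the corollary from Theorem~\ref{theorem:empty} after exhibiting a suitable $\rhobar$ has the right shape, but both the construction and the dimension argument have gaps. First, Proposition~\ref{prop:galois} does not produce representations of $G_{\Q}$: its conclusion is a Galois extension $L/K$ where $K$ is itself a (generally nontrivial) finite extension of the base field $E$. Running it with $E = \Q$ and $G = \SL_2(\F_p)$ yields an even, big-image $\rhobar$ over some totally real $F^{+} = K$ of large degree, not over $\Q$. (The paper does give an honest example over $\Q$ in Corollary~\ref{corr:better}, but only for $p = 11$ via an explicit $\PSL_2(\F_{11})$ polynomial, and that construction plays no role in the present corollary.) So you should set $F^{+}$ equal to the field output by Proposition~\ref{prop:galois}, not $\Q$.

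The more serious gap is the ``arbitrarily large dimension'' clause. Enlarging $S$ by adding primes $v \nmid p$ does not force $\dim \X$ to grow: by the global Euler characteristic formula, $h^1(G_{F^{+},S},\ad^0\rhobar) - h^2(G_{F^{+},S},\ad^0\rhobar)$ is independent of $S$ once $S$ contains all places above $p$ and $\infty$, so the standard lower bound on $\dim \X$ is fixed once and for all, and any new tangent directions coming from $H^1(G_v)$ are potentially matched by new obstruction classes in $H^2(G_v)$. Over $\Q$ with fixed determinant, for an even $\rhobar$, this invariant is $0$. The paper's route is genuinely different and is what makes the dimension claim work: one makes $[F^{+}:\Q]$ large and imposes $c_v = 1$ only at the places of $F^{+}$ lying over a \emph{single} chosen archimedean place of an auxiliary field $E^{+}$ of degree $> 1$, so that $\rhobar$ remains even somewhere but is odd at $r$ real places with $r$ as large as desired. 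Then Ramakrishna's method~\cite{R} produces, for a suitably enlarged $S$, a family of deformations of dimension $\delta + 2r$ ($\delta$ the Leopoldt defect), and one lets $r \to \infty$. Imposing $c_v = 1$ at \emph{every} real place, as you propose, sets $r = 0$ and eliminates exactly the source of growth.
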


\begin{proof} Let $F^{+}/\Q$ be a totally real field, and let $\rhobar:G_{F^{+}} \rightarrow
\GL_2(\F)$ be a continuous irreducible representation satisfying the conditions
of Theorem~\ref{theorem:empty}. 
Suppose, furthermore, that $\psi_v \ne \epsp^{-1}$
for any $v|p$. The existence of  such representations is guaranteed by
Proposition~\ref{prop:galois}. More precisely, start with an auxiliary totally real field $E^{+}$ of degree $> 1$,
and fix an infinite place $v$ of $E$.
Then use Proposition~\ref{prop:galois} to construct $\rhobar$ which 
are odd for all $w|\infty$ in $F^{+}$ with $w \nmid v$, and even for all $w|v$. By Theorem~\ref{theorem:empty}, $\Xgeom = \emptyset$
for any finite set of auxiliary primes $S$. 
Using the strategy of the proof of  Theorem~1(a) of~\cite{R},
one deduces the existence of sets $S$ for which
there exists a family of Galois representations
$$\rho: G_{F^{+},S} \rightarrow \GL_2(W(\F)[[T_1, \ldots, T_n]])$$
of (relative over $W(\F)$) dimension $n = \delta + 2 r$, where $\delta$ is the Leopoldt defect and
$r$ is the number of infinite places of $F^{+}$ at which
$\rhobar$ is odd, and such that every specialization of $\rho$ is
\emph{reducible} at $D_v$ for all $v|p$. (More generally, one may replace $F^{+}$
by any number field $F$ and obtain a family of dimension 
$\delta + 2s + 2r$, where $r$ is the number of real places at which $\rhobar$ is odd, and $s$
is the number of complex places. (If one fixes the determinant, this decreases
to $s + 2r$.) The special case when $F$ is an imaginary quadratic field 
is Lemma~7.6 of~\cite{CM}, but the proof for an arbitrary number field is essentially
the same.
By construction, $2r = [F^{+}:\Q] - [F^{+}:E^{+}]$. In particular, we can make $r$ ---
 and thus $\X$ --- arbitrarily large.
\end{proof}

Similarly, we note the following cases of the Fontaine--Mazur which do not require
any assumption on the Hodge--Tate weights or the parity of $\rhobar$:

\begin{corr} Let
$\rho: G_{\Q} \rightarrow \GL_2(\Qbar_p)$ be a continuous 
Galois representation which is unramified except at a finite number of primes. Suppose
that $p > 7$, and, furthermore, that
\begin{enumerate}
\item $\rho |D_p$ is potentially semi-stable.
\item The residual representation $\rhobar$ is absolutely
irreducible and is not of dihedral type.
\item $\rhobar|D_p$ is of the form:
$\displaystyle{ \left( \begin{matrix} \psi_1 & * \\ 0 & \psi_2 \end{matrix} \right)}$ where:
\begin{enumerate}
\item $*$ is ramified.
\item $\psi_1/\psi_2 \ne \epsp$, and $\psi_1/\psi_2 |I_p$ has order $> 2$.
\end{enumerate}
\end{enumerate}
Then $\rho$ is modular.
\end{corr}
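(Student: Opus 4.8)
The plan is to split into two cases according to whether the Hodge--Tate weights of $\rho$ at $p$ are distinct. In the distinct-weight case I would simply verify the hypotheses of Theorem~\ref{theorem:even} and quote it; in the equal-weight case I would show, by the projective-image argument from the proof of Theorem~\ref{theorem:empty}, that hypothesis $(3)$ makes this impossible, so that only the first case occurs.

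For the distinct-weight case, conditions $(1)$ and $(2)$ of Theorem~\ref{theorem:even} hold verbatim, so the only point to check is condition $(3)$, that $\rhobar|D_p$ is not a twist of $\left(\begin{matrix}\epsp & *\\0&1\end{matrix}\right)$. Here I would use that, since $*$ is ramified, $\rhobar|D_p$ is a genuinely non-split extension and hence has a unique $D_p$-stable line; any isomorphism expressing $\rhobar|D_p$ as a twist of $\left(\begin{matrix}\epsp & *\\0&1\end{matrix}\right)$ must match these stable lines, forcing the ratio of the diagonal characters to be $\epsp$, i.e.\ $\psi_1/\psi_2 = \epsp$, contrary to assumption. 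Theorem~\ref{theorem:even} then gives that $\rho$ is modular.

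It remains to rule out equal Hodge--Tate weights at $p$. Suppose they are equal; twisting by a character, $\rho|D_p$ becomes Hodge--Tate of parallel weight zero, hence unramified over a finite extension, so $\rho(I_p)$ is finite and the projective image of $\rho|I_p$ is a finite subgroup of $\PGL_2(\Qbar_p) \simeq \PGL_2(\C)$ --- cyclic, dihedral, $A_4$, $S_4$, or $A_5$. Since the image of $\rho$ lies in $\GL_2(\OL)$ and reduction carries scalars to scalars, the projective image $\bar P$ of $\rhobar|I_p$ is a quotient of this group and so again lies in the list. But hypothesis $(3)$ excludes every possibility: since $*$ is ramified, $\rhobar(I_p)$ is a non-split, hence non-abelian, subgroup of a Borel of $\GL_2(\F)$, so $\bar P$ is non-cyclic and $|\bar P|$ is divisible by $p > 7$ (a prime-to-$p$ subgroup of a Borel is conjugate into the torus), ruling out $A_4$, $S_4$, $A_5$; and, viewed inside the affine group $\F \rtimes \F^{\times}$, $\bar P$ has the form $V \rtimes H$ with $V \neq 0$ an elementary abelian $p$-group and $H$ cyclic of order $\ord(\psi_1/\psi_2|I_p) > 2$ acting faithfully on $V$, and no such (Frobenius) group is dihedral. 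This contradiction finishes the reduction.

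The main obstacle is this last group-theoretic step --- showing the projective image of $\rhobar|I_p$ cannot be dihedral --- since it is precisely there that \emph{both} halves of hypothesis $(3)$, that $*$ is ramified and that $\ord(\psi_1/\psi_2|I_p) > 2$, are genuinely needed; the remainder is bookkeeping built on Theorem~\ref{theorem:even} and the argument already given for Theorem~\ref{theorem:empty}.
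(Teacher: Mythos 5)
Your proof is correct and follows exactly the argument the paper leaves implicit: the corollary is stated just after Theorem~\ref{theorem:empty} precisely so that the reader combines the projective-image argument there (which rules out equal Hodge--Tate weights at $p$ under the Borel-type local hypothesis) with Theorem~\ref{theorem:even} (which handles the distinct-weight case once one checks, as you do via the uniqueness of the stable line, that condition~(3) of that theorem holds). The only small caveat is that the step ``non-split, hence non-abelian'' needs the nontriviality of $\psi_1/\psi_2|_{I_p}$ in addition to $*\ne 0$ — but you in fact supply both ingredients in the ensuing sentence, so the argument as a whole is sound and matches the paper's intent.
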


Although  proposition~\ref{prop:galois} guarantees the existence of infinitely
many even  Galois
representations over totally real fields with image containing $\SL_2(\F_p)$,
it may also be of interest to construct at least one example over $\Q$ (with $p \ge 11$).
We shall do this now.

\begin{lemma} Let $K/\Q$ be a degree $11$ extension with splitting field $L/\Q$ such that:
\begin{enumerate}
\item $11$ is totally ramified in $K$.
\item $G = \Gal(L/\Q) = \PSL_2(\F_{11})$.
\item $\ord_{11}(\Delta_{K/\Q}) \not\equiv 0 \mod 10$.
\end{enumerate}
Let $\p$ denote a prime above $p: = 11$ in $L$, and let $I \subseteq D \subset G$
denote the corresponding inertia and decomposition groups. \label{lemma:biglocal}
Then $I = D$ has order $55$ and is the full Borel subgroup of $G$.
\end{lemma}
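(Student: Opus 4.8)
The plan is to analyze the local behaviour of the degree $11$ extension $K/\Q$ at the prime $11$ and translate it into the structure of $D$ and $I$ inside $G = \PSL_2(\F_{11})$. First I would recall the subgroup structure of $\PSL_2(\F_{11})$: it has order $660 = 2^2 \cdot 3 \cdot 5 \cdot 11$, its $11$-Sylow subgroups have order $11$, and the normalizer of an $11$-Sylow is a Borel subgroup of order $55 = 11 \cdot 5$ (a Frobenius group with cyclic quotient of order $5$ acting on the normal subgroup of order $11$). Since $11$ is totally ramified in $K$ by hypothesis $(1)$, the inertia group $I$ acts transitively on the $11$ embeddings of $K$ into $\Kbar$; equivalently, $I$ acts transitively on the $11$ points of the natural degree-$11$ permutation action of $G$ (which is the action on $\mathbf{P}^1(\F_{11})$ when $G = \PSL_2(\F_{11})$ is viewed via its exceptional transitive action, or on cosets of a point-stabilizer of order $60$). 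So $11 \mid |I|$, hence $I$ contains an $11$-Sylow subgroup $P$ of $G$.

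Next I would use the fact that $p = 11$ and the base field is $\Q$, so the tame quotient of $D$ is cyclic (pro-cyclic), and wild inertia is a pro-$p$ group. Since $|G| = 660$ has $11$ appearing to the first power only, the wild inertia subgroup $I_{\mathrm{wild}} \cap I$ maps to a subgroup of order $1$ or $11$ in $G$; in either case the image of $I$ in $G$ has the $11$-part in its (normal) wild part and a cyclic tame quotient. Thus $I$ normalizes $P$, so $I \subseteq N_G(P) = B$, the Borel of order $55$. Combined with $11 \mid |I|$, we get $|I| \in \{11, 55\}$. To rule out $|I| = 11$ I would invoke hypothesis $(3)$: the conductor-discriminant formula (or the standard formula for the discriminant exponent of a totally ramified extension) shows that if $I$ had order exactly $11$ — i.e. $11$ were totally \emph{and tamely} ramified — then $\ord_{11}(\Delta_{K/\Q})$ would be $11 - 1 = 10 \equiv 0 \bmod 10$, contradicting the assumption; more generally, a careful bookkeeping of the ramification filtration shows that the tame part of $I$ contributes a multiple of the size of the inertia to the discriminant, and the condition $\ord_{11}(\Delta_{K/\Q}) \not\equiv 0 \bmod 10$ forces the tame quotient of $I$ to be nontrivial, so $|I| = 55$.

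Finally, since $11$ is totally ramified in $K$ and $K$ has degree $11$, there is a unique prime above $11$ in $K$, and one checks that in fact $11$ is totally ramified in all of $L$: the decomposition group $D$ satisfies $D \supseteq I$, and $D/I$ is the cyclic group generated by Frobenius acting on the residue extension; but $B$ has no proper subgroup properly containing the Borel (indeed $B$ is maximal in $G$), so either $D = I = B$ or $D = G$. The latter would force the residue field extension at $\p$ to have degree $|G|/|I| = 12$ while simultaneously $e = |I| = 55$, giving $ef = 55 \cdot 12 = 660 = |G|$, which is consistent a priori; so I would instead argue that $f = [D:I]$ must divide $|B/P|$-type constraints — more directly, $D$ normalizes $I$ (as $I \trianglelefteq D$ always) and $I = B$ is self-normalizing in $G$ (the normalizer of a Borel is itself), hence $D = N_G(I) = B = I$. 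This gives $I = D = B$ of order $55$, the full Borel, as claimed. The main obstacle I anticipate is the precise discriminant computation ruling out the tame case $|I| = 11$: one must be careful that the hypothesis is stated modulo $10$ precisely because a tamely-but-totally ramified degree $11$ extension contributes exponent $10$, so the content is that $11$ is \emph{wildly} ramified, forcing $11 \mid |I|$ strictly, i.e. $|I| = 55$.
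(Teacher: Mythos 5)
Your overall architecture matches the paper's: show $11 \mid |I|$, land $I$ and $D$ in the Borel $B = N_G(P)$ of order $55$, rule out $|I|=11$ via hypothesis (3), and conclude $I = D = B$. The route to $I,D\subseteq B$ is phrased slightly differently (you use that wild inertia is normal with cyclic tame quotient; the paper cites solvability of $D$ together with the fact that the only proper subgroups of $\PSL_2(\F_{11})$ containing a Sylow $11$-subgroup are $P$ and $B$), but both are correct and essentially equivalent.

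The one genuine problem is your treatment of the case $|I|=11$, which you describe as ``$11$ were totally \emph{and tamely} ramified'' with discriminant exponent $e-1 = 10$. That characterization is wrong: since $p=11$, an inertia group of order $11$ is \emph{wild}, not tame (tame means $p\nmid e$), and the tame formula $\ord_p(\Delta)=e-1$ does not apply. The reason the exponent is still $\equiv 0\bmod 10$ is the ramification-filtration formula $\ord_p(\Delta_{F/\Q_p}) = f\cdot\sum_{n\ge 0}(|I_n|-1)$: when $|I|=11$ every $I_n$ has order $1$ or $11$, so each summand is $0$ or $10$. Your later sentence (``the content is that $11$ is wildly ramified, forcing $11\mid |I|$ strictly, i.e.\ $|I|=55$'') is a non-sequitur, since wildness is automatic once $11\mid e$ and does not by itself force the tame quotient to be nontrivial; what hypothesis (3) actually rules out is precisely the trivial-tame-quotient case $|I|=11$. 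Separately, you acknowledge but do not carry out the bookkeeping in the sub-case $|D|=55$, $|I|=11$, where $E = K_{\mathfrak{p}\cap K}$ is not Galois over $\Q_p$ and one must pass through $F = L_{\mathfrak{p}}$; the paper does this via $\Delta_{F/\Q_p} = N_{F/E}(\mathfrak{d}_{F/E})\cdot\Delta_{E/\Q_p}^{5}$ together with $F/E$ unramified, yielding $\ord_{11}(\Delta_{E/\Q_p}) = \tfrac{1}{5}\ord_{11}(\Delta_{F/\Q_p})\equiv 0\bmod 10$. Finally, ``$D = N_G(I)$'' should be ``$D\subseteq N_G(I)$''; the conclusion is unaffected since $N_G(B)=B$ and $I\subseteq D$.
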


\begin{proof} Since $11$ is totally ramified in $K/\Q$, the inertia group $I$ has order divisible
by $[K:\Q] = 11$. Since $I \subset D$ is solvable, it follows that $D$ is contained inside a Borel subgroup of $G$.
Let $F$ and $E$ denote the images of $L$ and $K$ under their embedding into $\Qbar_p$
corresponding to $\p$. We note that $D = \Gal(F/\Q_p)$, and we have the following diagram:
$$
\begin{diagram}
L & \rLine & K & \rLine & \Q \\
\dInto & & \dInto & & \dInto \\
F & \rLine_{e \kern-0.1em{f} = 1,5} & E & \rLine_{e = 11} & \Q_p \\
\end{diagram}
$$
 It suffices to assume that $|I| = 11$ and deduce a contradiction.
  Suppose that $|D| = 11$, so $F = E$ is abelian over $\Q_p$.
   By local class field theory, $F/\Q_p$ is (up to an unramified twist)
   given by the degree $p = 11$ subfield of the $p^2$-roots of unity.
Thus
$$\Delta_{E/\Q_p} = \Delta_{F/\Q_p} =  11^{20},$$
and thus $\ord_{11}(\Delta_{E/\Q_p}) \equiv  0 \mod 10$, a contradiction.

Suppose that $|D| = 55$ and $|I| = 11$. 
Let $I_n \subseteq I$ denote the lower ramification groups.
The $p$-adic valuation of the discriminant of $F/\Q_p$ is given by
the following formula:
$$\ord_{p}(\Delta_{F/\Q_p}) = \frac{|D|}{|I|} \cdot \sum_{n=0}^{\infty} \left( |I_n| - 1\right).$$
By assumption, $|D|/|I| = 5$ and $|I_n| = 11$ or $1$ for all $n$. We deduce that
$\ord_{p}(\Delta_{F/\Q_p}) \equiv 0 \mod 50$. On the other hand,
$$\Delta_{F/\Q_p} = N_{F/E}(\Delta_{F/E}) \cdot (\Delta_{E/\Q})^{5}.$$
Since $F/E$ is unramified, we deduce that
$$\ord_{11}(\Delta_{E/\Q_p}) = 
\frac{1}{5} \ord_{11} (\Delta_{F/\Q_p}) \equiv 0 \mod 10.$$
Since $\ord_{11}(\Delta_{K/\Q}) = \ord_{11}(\Delta_{E/\Q_p})$, the lemma follows.
\end{proof}

\begin{corr} \label{corr:better} 
There exists a surjective even  representation $\rhobar: G_{\Q} \rightarrow \SL_2(\F_{11})$
with no geometric deformations.
\end{corr}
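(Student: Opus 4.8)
The plan is to manufacture a surjection $\rhobar\colon G_{\Q}\to\SL_2(\F_{11})$ to which Theorem~\ref{theorem:empty} applies with $F^{+}=\Q$ and $p=11$ (so $p>7$, and $p$ splits completely in $\Q$ trivially), and then simply quote that theorem.

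First I would fix a degree $11$ field $K/\Q$ as in Lemma~\ref{lemma:biglocal}: its Galois closure $L$ has $\Gal(L/\Q)=\PSL_2(\F_{11})$, the prime $11$ is totally ramified in $K$, and $\ord_{11}(\Delta_{K/\Q})\not\equiv 0\bmod 10$. The existence of such a $K$ is a (classical) instance of the inverse Galois problem with prescribed ramification: one realizes $\PSL_2(\F_{11})$ over $\Q(t)$ by rigidity and specializes so that the completion at a prime above $11$ acquires a suitably wild totally ramified local behaviour --- for example one resembling $\Q_{11}(11^{1/11})$, whose different exponent $2\cdot 11-1=21$ is $\not\equiv 0\bmod 10$ --- or one simply invokes an explicit such field. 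By Lemma~\ref{lemma:biglocal} the inertia group equals the decomposition group at a prime of $L$ above $11$, and both equal the full Borel subgroup $B\cong\Z/11\rtimes\Z/5$ of $\PSL_2(\F_{11})$, of order $55$. Next I would lift the tautological surjection $G_{\Q}\twoheadrightarrow\Gal(L/\Q)=\PSL_2(\F_{11})\hookrightarrow\PGL_2(\F_{11})$ through the central extension $1\to\{\pm1\}\to\SL_2(\F_{11})\to\PSL_2(\F_{11})\to1$ to obtain $\rhobar\colon G_{\Q}\to\SL_2(\F_{11})$. This is an embedding problem whose obstruction lies in $H^2(G_{\Q},\Z/2)$; I expect this to be the main difficulty, to be handled by choosing the field $L$ so that the obstruction --- computable, after Serre, from the trace form of $K/\Q$ and its ramification --- vanishes, such $\SL_2(\F_{11})$-extensions of $\Q$ with controlled ramification at $11$ being well documented. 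Since $\SL_2(\F_{11})$ is perfect and hence has no subgroup of index $2$, any lift of the surjection $\bar\rho$ is automatically surjective onto $\SL_2(\F_{11})$.

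It then remains to verify the hypotheses of Theorem~\ref{theorem:empty}. The image of $\rhobar$ contains $\SL_2(\F_{11})=\SL_2(\F_p)$. At the unique real place, complex conjugation $c$ satisfies $\rhobar(c)^2=1$ and $\det\rhobar(c)=1$, which forces $\rhobar(c)$ to be the scalar $\pm 1$, so $\rhobar$ is even. At $v=11$, the projective image of $\rhobar|_{I_{11}}$ equals $B$ by Lemma~\ref{lemma:biglocal}, so after conjugation $\rhobar|_{I_{11}}$ has the shape $\left(\begin{smallmatrix}\psi&*\\0&\psi^{-1}\end{smallmatrix}\right)$ with $*\neq0$, the unipotent radical of $B$ being nontrivial; twisting by $\psi$ brings this to $\left(\begin{smallmatrix}\psi^{2}&*\\0&1\end{smallmatrix}\right)$, so the relevant character is $\psi_{11}=\psi^{2}$. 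It takes values in $(\F_{11}^{\times})^{2}$, the subgroup of order $5$, and it is nontrivial --- otherwise the image of $I_{11}$ in $\PGL_2(\F_{11})$ would have order at most $22$, contradicting $|B|=55$ --- hence $\psi_{11}$ has order exactly $5$. In particular $\psi_{11}$ has order $>2$, and since $\epsp$ has order $10$ we have $\psi_{11}\neq\epsp$. Thus every hypothesis of Theorem~\ref{theorem:empty} holds, so $\Xgeom=\emptyset$ for every finite set $S$ of places of $\Q$; that is, $\rhobar$ has no geometric deformations. The chief obstacle is the construction-and-embedding-problem step producing $\rhobar$ with the prescribed wild ramification at $11$; once that is in hand the verification of evenness and of the local shape at $11$ is routine.
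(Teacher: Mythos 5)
Your strategy is the same as the paper's: produce a $\PSL_2(\F_{11})$-extension $L/\Q$ satisfying the hypotheses of Lemma~\ref{lemma:biglocal}, lift it to an $\SL_2(\F_{11})$-extension, and apply Theorem~\ref{theorem:empty}. The places where you differ are mostly improvements: your evenness argument (that $\rhobar(c)\in\SL_2(\F_{11})$ with $\rhobar(c)^2=1$ forces $\rhobar(c)=\pm I$, hence scalar) is cleaner than the paper's, which observes that the projective extension is totally real and then twists by a quadratic character of an imaginary quadratic field; and your argument that $\psi_{11}=\psi^2$ has order exactly $5$ (so $>2$ and $\neq\epsp$) is a slick alternative to the paper's determinant argument.

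However, there is a genuine gap, and you have correctly flagged it yourself but not filled it: the existence of a $\PSL_2(\F_{11})$-extension $L/\Q$ with $11$ totally ramified and $\ord_{11}(\Delta_{K/\Q})\not\equiv 0\bmod 10$, and the solvability of the central embedding problem $1\to\{\pm1\}\to\SL_2(\F_{11})\to\PSL_2(\F_{11})\to 1$ over this $L$, constitute essentially all of the paper's proof. ``One simply invokes an explicit such field'' and ``such extensions being well documented'' is not a proof, and the two conditions pull against each other: you must simultaneously control the wild ramification at $11$ and kill the Brauer obstruction, and it is not obvious \emph{a priori} that both can be arranged. The paper resolves this by writing down an explicit degree-$11$ polynomial drawn from a parametric family of Malle, computing $\Delta_{K/\Q}=11^{12}\cdot q^4$ with $q=133462088669841218191\equiv 3\bmod 4$, checking that every prime of $L$ above $q$ has even residue degree and even ramification index, and then invoking a theorem of B\"oge (cited as Theorem~1.1 of~\cite{Kluners}) which gives an explicit criterion, in terms of exactly this ramification data, for the $\SL_2$ embedding problem to be solvable. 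Your proposal needs some concrete mechanism of this kind; as written, the load-bearing existence claim is asserted rather than established.
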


\begin{proof} Let $K$ be the field obtained by adjoining to $\Q$ a root of the irreducible
polynomial
{\small
\begin{align*}
x^{11} & + 154 \cdot x^{10} + 8591 \cdot x^9 + 207724 \cdot x^8 + 1846031 \cdot x^7 - 2270598 \cdot x^6 - 63850600 \cdot x^5 \\
& +  73646034 \cdot x^4 + 582246423 \cdot x^3 - 1610954576 \cdot x^2 + 1500989952 \cdot x - 481890304.
\end{align*}
}
This polynomial was obtained by specializing the parameters $a$ and $t$ of a polynomial
found by Malle (Theorem~9.1 of~\cite{Malle}) to $a = 14$ and $t = -419$
respectively.
One may verify that $11$ is totally ramified in $K/\Q$,  that
the splitting field $L/\Q$ is totally real with Galois group $G = \PSL_2(\F_{11})$,
and that  the discriminant has a prime factorization as follows:
$$\Delta_{K/\Q} = 11^{12} \cdot 133462088669841218191^{4}.$$
Since  $\ord_{11}(\Delta_{K/\Q}) = 12$, it follows from
Lemma~\ref{lemma:biglocal} that the inertia group $D$ at $11$
is the full Borel subgroup of $G$. We note also the factorization
$$133462088669841218191 \cdot \OL_K = \p_1 \p^2_2 \p^2_3 \q^2_1 \q_2,$$
where $\p_i$ and $\q_i$  have residue degree $1$ and $2$ respectively.
It follows that the residue degree and the ramification index of every  prime above $133462088669841218191$
in $L$ is even (in fact, $2$).
Since  $133462088669841218191 \equiv 3 \mod 4$,  it follows from a theorem
of B\"{o}ge (Theorem~1.1 of~\cite{Kluners}) that $L$
embeds in a $\SL_2(\F_{11})$-extension $N/\Q$. The action  of complex conjugation on this extension is, by construction,
either trivial or by $\left(\begin{matrix} -1 & 0 \\ 0 & -1 \end{matrix} \right)$. In the former case, $N$ is totally real. In the latter case,
we may twist by some (any) quadratic character of an imaginary quadratic field, and the corresponding field extension (which we still
call $N$) will be totally real.
Let $\rhobar: G_{\Q} \rightarrow \Gal(N/\Q) = \SL_2(\F_{11})$ denote the corresponding
representation. We now show that  $\rhobar|D_{11}$ satisfies the conditions
of Theorem~\ref{theorem:empty}. Since the decomposition group at $11$ maps surjectively
onto the Borel of $\PSL_2(\F_{11})$, it is contained in the Borel of $\SL_2(\F_{11})$.
Any such representation may be twisted (in $\GL_2(\F_{11})$) to be of the form
$$\left( \begin{matrix} \psi & * \\ 0 & 1 \end{matrix} \right)$$ for some character $\psi$.
If $\psi$ has order two, then the image of $D_{11}$ will not surject onto the Borel of
$\PSL_2(\F_{11})$ (twisting does not affect this projection). If 
$\psi = \epsp$, however, then $ \det(\rhobar) = \epsp \cdot \chi^2$ for
some character $\chi$.  Yet $\rhobar$ has image in $\SL_2(\F_{11})$ and thus has trivial
determinant,  whilst $\epsp$
is not the square of any character. Thus $\psi \ne \epsp$, and Theorem~\ref{theorem:empty}
applies.
\end{proof}

\section{Some remarks on the condition $p > 7$}

One may wonder if the condition that $p > 7$ is used in an
essential way in this argument. At the very least, one will
require that the representation
$$\Sym^2 \rhobar: G_{\Q} \rightarrow \GL_3(\F_p)$$
be adequate. This fails to have adequate image if
the image of $\rhobar$ is $\SL_2(\F_p)$ and $p \le 7$. 
The author expects that
for $p = 7$ it should be sufficient to assume that
the projective image of $\rhobar$ is either $A_4$, $S_4$, $A_5$
or contains $\PSL_2(\F_{49})$, that for $p = 5$ that projective
image  is either $A_4$, $S_4$, or contains $\PSL_2(\F_{25})$, and
that for $p = 3$ the image contains $\PSL_2(\F_{27})$.
The main technical issue to address is exactly what form
of adequateness is required in Proposition~3.2.1 
of~\cite{BLGGT}, although another issue is that many
of the references we cite include assumptions on
$p$ which would also need to be modified 
(using~\cite{Thorne}). The methods (in principle) also apply
with $p = 2$, although many more technical ingredients
would need to be generalized in this case, in particular, the work
of~\cite{KisinFM}.

\section{A remark on potential modularity theorems}
\label{section:general}

In recent modularity lifting results~\cite{BLGHT,BLGGT,BLGG,BLGGtwo,G} for $l$-adic
representations, a weak form
of local-global compatibility at primes $v|l$  is invoked (in this section only, we work
with $l$-adic representations rather than $p$-adic representations in order
to be most compatible with~\cite{BLGHT}), namely, that automorphic
forms of level co-prime to $l$  give rise to crystalline Galois representations (of the
correct weight).
 In general, local--global compatibility 
for RACDSC cuspidal forms for $\GL(n)$ is
only known in the crystalline case (as follows from~\cite{TY}), although partial results are known
in the semi-stable case.
 In this section, we show how to prove modularity
 results similar to Theorem~\lifting of~\cite{BLGHT}  without local--global
 compatibility, allowing for a modularity lifting theorem
 in the potentially semi-stable case. We claim no great originality, 
 as the proof is essentially the same
as the proof
 of  Theorem~\lifting of~\cite{BLGGT} (or Theorem~7.1 of~\cite{Thorne}) with the addition
 of one simple ingredient (Lemma~\ref{lemma:conrad} below). (The authors of~\cite{BLGGT} inform
 me that they have a different method for dealing with the
 potentially semi-stable case which was not included in~\cite{BLGGT}
 for space reasons.)
 (One should also compare the statement of this theorem to Theorem~7.1 of~\cite{Thorne}.)

 \begin{theorem} Let $F$ \label{theorem:semistable}
 be an imaginary CM field with maximal
  totally real subfield  $F^{+}$. Suppose $l$ is odd and let
  $n$ be a positive integer. Let
  $$r:G_F \rightarrow \GL_n(\Qbar_{l})$$
  be a continuous representation and let
  $\rbar$ denote the corresponding residual representation. Also, let
  $$\mu: G_{F^{+}} \rightarrow \Qbar^{\times}$$
  be a continuous homomorphism. Suppose that
  $(r,\mu)$ enjoys the following properties:
  \begin{enumerate}
  \item $r^c \simeq r^{\vee} \eps^{1-n}_{l} \mu |_{G_F}$.
  \item $\mu(c_v)$ is independent of $v|\infty$.
  \item the reduction $\rbar$ is absolutely irreducible and
  $\rbar(G_{F(\zeta_{l})}) \subset \GL_n(\Fbar_{l})$ is
  adequate.
  \item There is a RAECDSC automorphic representation
  $(\pi,\chi)$ of
  $\GL_n(\mathbf{A}_F)$ with the following
  properties.
  \begin{enumerate}
  \item $(\rbar,\mubar) \simeq
  (\rbar_{l,\iota}(\pi),\rbar_{l,\iota}(\chi))$.
  \item For all
  places $v \nmid l$ of $F$ at which
  $\pi$ or $r$ is ramified, we have
  $$r_{l,\iota}(\pi) |_{G_{F_v}}
  \sim r|_{G_{F_v}}.$$
  \item
  For all places $v | l$ of $F$, $r|_{G_{F_v}}$ is potentially semi-stable and we have
  $$r_{l,\iota}(\pi) |_{G_{F_v}}
  \rightsquigarrow r |_{G_{F_v}}.$$
\item If $n$ is even, $\pi$ has slightly regular weight.
  \end{enumerate}
  \end{enumerate}
  Then $(r,\mu)$ is automorphic. 
  \end{theorem}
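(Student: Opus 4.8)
The plan is to follow the Taylor--Wiles--Kisin patching argument exactly as in the proof of Theorem~\lifting of~\cite{BLGGT} (equivalently Theorem~7.1 of~\cite{Thorne}), and to insert one extra ingredient (Lemma~\ref{lemma:conrad}) in order to handle the potentially semi-stable rather than potentially crystalline case. First I would reduce to a situation where the relevant local deformation conditions at places $v|l$ are understood: for each such $v$, the hypothesis $r_{l,\iota}(\pi)|_{G_{F_v}} \rightsquigarrow r|_{G_{F_v}}$ means, by the definition of $\rightsquigarrow$, that $r_{l,\iota}(\pi)|_{G_{F_v}}$ is a smooth point of the potentially semi-stable local deformation ring $\Spec(R^{\Box,\tau_v,\v_v}_{\rbar|_{G_{F_v}}}[1/p])$ lying on the same irreducible component as $r|_{G_{F_v}}$. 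The point of smoothness (as opposed to working on the potentially crystalline ring, which is formally smooth everywhere) is that one still gets a component of the patched deformation ring through which both $r$ and the automorphic point factor.

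Next I would carry out the patching: form the patched module $M_\infty$ over the patched deformation ring $R_\infty$, which is a power series ring over a completed tensor product of local framed deformation rings $\bigotimes_{v|l} R^{\Box,\tau_v,\v_v}_{\rbar|_{G_{F_v}}}$ together with the framing and auxiliary Taylor--Wiles variables. Adequateness of $\rbar(G_{F(\zeta_l)})$ (hypothesis~(3)) is exactly what is needed for the Taylor--Wiles prime argument and the analysis of $M_\infty$ to go through in the generality of~\cite{Thorne}; this gives that $M_\infty$ is a maximal Cohen--Macaulay module over $R_\infty$ of the expected codimension. One then restricts attention to the quotient of $R_\infty$ cut out by the irreducible components (one for each $v|l$) determined by $r|_{G_{F_v}}$; because $r_{l,\iota}(\pi)|_{G_{F_v}}$ lies on the same component, the automorphic point contributes to the support of $M_\infty$ on this quotient, so $M_\infty$ is nonzero there, and a standard commutative-algebra argument (the support being a union of irreducible components, each of the expected dimension) forces $M_\infty$ to be supported on the \emph{whole} component containing $r$. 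Specializing the Taylor--Wiles variables back to zero then shows $r$ itself is automorphic.

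The one genuinely new point, and the step I expect to be the main obstacle, is the following: in the potentially semi-stable case the local deformation ring $R^{\Box,\tau_v,\v_v}_{\rbar|_{G_{F_v}}}$ need not be formally smooth, and more importantly the argument of~\cite{BLGGT} uses local--global compatibility at $v|l$ to know that the automorphic Galois representation $r_{l,\iota}(\pi)|_{G_{F_v}}$ actually lands on the correct deformation ring with the correct type and Hodge type. Since that local--global compatibility is known unconditionally only in the crystalline case (via~\cite{TY}), I would instead invoke Lemma~\ref{lemma:conrad} — an elementary statement about the geometry of these local deformation rings (presumably: that a point lying on a given irreducible component of the generic fibre and known to be smooth can be connected appropriately, or that the relevant components are cut out correctly regardless of whether one works with the semi-stable or crystalline ring) — to bridge this gap. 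Concretely, one uses the fact, built into the hypothesis via $\rightsquigarrow$, that $r_{l,\iota}(\pi)|_{G_{F_v}}$ is a smooth point on the same component as $r|_{G_{F_v}}$: smoothness of the patched module at that point, together with Lemma~\ref{lemma:conrad} controlling the component structure, suffices to run the support argument without ever needing full local--global compatibility. Everything else — the essential self-duality bookkeeping with $\mu$ and $\chi$, the $\mu(c_v)$-independence condition, the slightly regular hypothesis when $n$ is even — is handled verbatim as in~\cite{BLGGT,Thorne}.
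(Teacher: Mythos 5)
Your proposal correctly identifies the high-level strategy (run the Taylor--Wiles--Kisin patching of~\cite{Thorne}, exploit the smoothness built into $\rightsquigarrow$ because the semi-stable ring need not be formally smooth, and then argue by support on components), and those parts match the paper. But you have misidentified both the content of Lemma~\ref{lemma:conrad} and the central new difficulty it is designed to solve, and this is a genuine gap, not a cosmetic one.

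Lemma~\ref{lemma:conrad} is not a statement about the geometry of local deformation rings. It is a statement about \'etale cohomology: for a proper flat $\OL_K$-scheme $X$ with smooth generic fibre, there is a \emph{single} finite extension $L/K$ such that $H^i(Y_{\overline{K}},\Qbar_l)$ is semi-stable over $G_L$ for \emph{every} finite \'etale cover $Y \rightarrow X$, proved via de Jong alterations, cohomological descent, and Tsuji's $C_{\mathrm{st}}$. The reason this uniformity is needed is the point your proposal skips over: before you can run any support argument you must first have a surjection $R \rightarrow \mathbf{T}_{Q_n}$ for each set of auxiliary Taylor--Wiles primes, and hence you must write down a fixed deformation condition at $v|l$ (potentially semi-stable over some fixed $L$, with Hodge types and inertial types drawn from a fixed finite set) through which \emph{all} the Galois representations occurring in $\mathbf{T}_{Q_n}$ factor, uniformly in $Q_n$. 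Without local--global compatibility above $l$, one cannot simply read off the type of $r_{l,\iota}(\pi')$ for $\pi'$ at auxiliary level $N$; a priori the extension needed to make these representations semi-stable could grow with $N$, and then no single ring $R$ dominates all the $\mathbf{T}_{Q_n}$. The paper closes this by realizing the relevant Galois representations in $H^*(\Sh(N)_{\Kbar},\LL)$ as subquotients of the cohomology of $\A^m(N)$, a finite \'etale cover of $\A^m$, and invoking Lemma~\ref{lemma:conrad} to get semi-stability over a fixed $L$ independent of $N$. (This is also why the slightly-regular hypothesis appears for $n$ even: one needs the Galois representation of $\pi$ to be realizable geometrically in such cohomology.) Your proposed reading of the lemma as ``controlling the component structure'' of the deformation ring cannot do this job; the $\rightsquigarrow$ hypothesis already gives you the component and smoothness information, but it says nothing about what happens at the auxiliary levels introduced by patching.
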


\begin{remark} \emph{The only difference between this theorem and
Theorem~\lifting of~\cite{BLGGT} is that:
\begin{enumerate}
\item We do \emph{not} assume that $\pi$ is potentially
unramified above $l$.
\item We require for $v|l$ that $r_{l,\iota}(\pi) |_{G_{F_v}}
  \rightsquigarrow r |_{G_{F_v}}$ rather than
  $r_{l,\iota}(\pi) |_{G_{F_v}}
  \sim r |_{G_{F_v}}$.
\item We impose that $\pi$ has slightly regular weight (this
is only a condition when $n$ is even). This is because we
require that the Galois representation associated to $\pi$ can
be realized geometrically. Perhaps using the methods
of~\cite{Chenevier} this assumption can be eliminated.
Alternatively, one could try to work with the representation
$r_{l,\iota}(\pi)^{\otimes 2}$ which can be realized
geometrically (see~\cite{Caraiani}).
  \end{enumerate}
}
  \end{remark}

\begin{proof} We make the following minor adjustment to the
proof of Theorem~7.1 of~\cite{Thorne} (cf.
Theorem~\lifting of~\cite{BLGGT}).
The character $\chi$ may be untwisted after some
solvable ramified extension.
We now modify the deformation problem consided in the proof
of Theorem~3.6.1 of~\cite{BLGG} as follows.
At $v|l$, we consider deformations (with a fixed finite
collection of Hodge  types $\v$) that
become potentially semi-stable over some fixed extension $L/K$,
where $L$ will be determined below.
The argument proceeds in the same manner 
 \emph{providing} there exists  a map from $R \rightarrow \mathbf{T}$,
 and, (in light of the fact that the deformation rings at $l$ might have non-smooth
 points in characteristic zero) utilizing   the hypothesis
 that  $r_{l,\iota}(\pi) |_{G_{F_v}}$ corresponds
to a smooth point in the local deformation ring.
For any fixed level structure, the Galois
 representations arising from quotients of $\mathbf{T}_{Q_n}$ are
 potentially semi-stable over some extension $L/K$ by a theorem
 of Tsuji~\cite{Ts}. However, to define the appropriate ring $R$, we must ensure
 that the corresponding types of these Galois representations lie in some finite
 set \emph{independent} of the set of auxiliary primes $Q_n$. 
In particular,   we are required to show that we may find
 a \emph{fixed} $L/K$ such that the Galois representations obtained
 by adding any set of auxiliary Taylor--Wiles primes are  semi-stable over 
 the same field $L$.

 \begin{lemma} Let $K/\Q_l$ be a finite extension, \label{lemma:conrad}
 and let $X$ be a proper flat
 scheme over $\Spec(\OL_K)$ with smooth generic fibre.
 Then there exists a finite
 extension $L/K$ with the following property: For every finite \'{e}tale map
 $\pi: Y \rightarrow X$, the \'{e}tale cohomology groups
 $H^i(Y_{\overline{K}},\Qbar_l)$ become semi-stable as representations
 of $G_L$.
 \end{lemma}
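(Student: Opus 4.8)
The plan is to bound the field of semistability uniformly by combining de Jong's theorem on alterations with Tsuji's semistable comparison theorem: a \emph{single} alteration of $X$ will control the cohomology of \emph{every} finite \'{e}tale cover of $X$ at once. Since $H^i(Y_{\overline K},\Qbar_l)$ depends only on the generic fibre $Y_K$, I would first replace $X$ by the schematic closure of $X_K$ and, treating connected components of $X_K$ one at a time, assume $X$ is integral. By de Jong's theorem on alterations there are a finite extension $L/K$ and an alteration $a\colon X' \to X\times_{\OL_K}\OL_L$ with $X'$ proper over $\OL_L$ and strictly semistable over $\OL_L$; in particular $X'$ is regular and flat over $\OL_L$, its special fibre is a reduced divisor with normal crossings, and $X'_L$ is smooth. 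I claim this $L$ works.

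Let $\pi\colon Y \to X$ be an arbitrary finite \'{e}tale map and set $Y' := Y\times_X X'$. Then $Y' \to X'$ is finite \'{e}tale (base change of the finite \'{e}tale map $Y\times_{\OL_K}\OL_L \to X\times_{\OL_K}\OL_L$), and the projection $b\colon Y' \to Y\times_{\OL_K}\OL_L$ is an alteration (base change of $a$: proper, surjective, generically finite). The first point is that $Y'$ is again semistable over $\OL_L$: it is regular (\'{e}tale over the regular scheme $X'$) and flat over $\OL_L$, and its special fibre $Y'_s$ is finite \'{e}tale over the reduced normal crossings divisor $X'_s\subset X'$, hence is itself a reduced normal crossings divisor in $Y'$ --- an \'{e}tale morphism pulls a regular system of parameters back to a regular system of parameters, so the local defining equation $t_1\cdots t_r$ of $X'_s$ pulls back to such an equation for $Y'_s$. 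Thus $Y'$ is a proper, flat, regular $\OL_L$-scheme with reduced normal crossings special fibre and smooth generic fibre $Y'_L$; that is, $Y'_L$ admits a proper semistable model over $\OL_L$.

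By Tsuji's comparison theorem (the $C_{\mathrm{st}}$ conjecture~\cite{Ts}, after Fontaine, Faltings, Hyodo--Kato and Kato; see also the later proof of Beilinson), it follows that $H^i(Y'_{\overline K},\Qbar_l)$ is a semistable representation of $G_L$ for every $i$. On the other hand $Y'_L$ and $Y_K$ are both smooth and proper (being finite \'{e}tale over $X'_L$, resp.\ $X_K$), and $b$ is proper, surjective and generically finite between schemes of the same dimension, so the projection formula gives a $G_L$-equivariant injection
$$H^i(Y_{\overline K},\Qbar_l)=H^i\big((Y\times_{\OL_K}\OL_L)_{\overline K},\Qbar_l\big)\hookrightarrow H^i(Y'_{\overline K},\Qbar_l),$$
split by $(\deg b)^{-1}$ times the Gysin trace of $b$. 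Hence $H^i(Y_{\overline K},\Qbar_l)$ is a sub-$G_L$-representation of a semistable representation; since semistable representations of $G_L$ are closed under passage to subrepresentations, $H^i(Y_{\overline K},\Qbar_l)$ is semistable as a $G_L$-representation. As $L$ was chosen before $Y$, this is exactly the assertion of the lemma.

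The step I expect to require the most care is the claim in the second paragraph that a finite \'{e}tale cover of a strictly semistable scheme is again semistable in the sense needed for Tsuji's theorem: one must check not merely that the special fibre stays a normal crossings divisor as an abstract scheme, but that the pair (regular total space, reduced normal crossings special fibre) is preserved --- which is precisely the local statement about \'{e}tale pullback of a regular system of parameters. It is this stability under \'{e}tale pullback that lets the one alteration of $X$ produced by de Jong's theorem serve for all covers $Y$ simultaneously, which is the whole content of the lemma.
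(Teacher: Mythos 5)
Your proof is correct, but it takes a genuinely different route from the paper's. The paper produces a proper \emph{hypercovering} $X_\bullet$ of $X$ with strictly semistable terms (via de Jong), applies Tsuji's $C_{\mathrm{st}}$ to each term, and then uses the cohomological-descent spectral sequence to conclude that $H^i(X_{\overline K},\Qbar_l)$ admits an exhaustive filtration with semistable subquotients; since it is also de Rham (Faltings), this forces it to be semistable. The preservation of the hypercovering under finite \'etale base change $Y \to X$ then gives the uniform statement. Your argument replaces the hypercovering and descent spectral sequence by a \emph{single} alteration $X' \to X \times_{\OL_K}\OL_L$, together with the Gysin-trace argument that for the induced proper, surjective, generically finite map $b_\eta\colon Y'_L \to Y_K\times_K L$ of smooth proper varieties of the same dimension, $b_\eta^*$ is split injective on cohomology with splitting $(\deg b_\eta)^{-1}\, b_{\eta *}$; combined with the stability of semistable representations under subobjects, this exhibits $H^i(Y_{\overline K},\Qbar_l)$ directly as a $G_L$-direct-summand of the semistable representation $H^i(Y'_{\overline K},\Qbar_l)$. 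The advantage of your route is that it avoids both the cohomological-descent machinery and the auxiliary fact that a de Rham representation with semistable semisimplification is semistable; the paper's route in exchange reproves $C_{\mathrm{pst}}$ along the way and works formally at the level of the hypercovering. The one place you should flag more carefully is the same one you already identified: the verification that a strictly semistable pair $(X', X'_s)$ pulls back to a strictly semistable pair under a finite \'etale map $Y'\to X'$ (including smoothness of the irreducible components of $Y'_s$, not just the normal-crossings property of the divisor); this is exactly the \'etale-pullback-of-a-regular-system-of-parameters statement you cite, and it is also implicitly used in the paper's proof when asserting that the hypercovering properties are preserved under finite \'etale base change.
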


 \begin{proof} 
 After making a finite
 extension $L/K$, there
 exists (via the theory of alterations~\cite{dJ}) a proper hypercovering
 $X_{\bullet}$ of $X$ 
 such that  for all $n \le 2 \dim(X)$:
 \begin{enumerate}
 \item $X_n$ is proper and flat over $\Spec(\OL_L)$
 \item $X_n$ has smooth generic fibre and semi-stable  special fibre.
 \end{enumerate}
 By cohomological descent, there is a spectral sequence
 $$H^m(X_{n,\overline{L}},\Qbar_l) \Rightarrow H^{m+n}(X_{\overline{L}},\Qbar_l).$$
The cohomology groups on the left are semi-stable by Tsuji's proof of $C_{\mathrm{st}}$. 
Since the property of being semi-stable is preserved by taking
sub-quotients,
 it follows that the $G_L$-representation $H^i(X_{\overline{L}},\Qbar_l)$
(for $i \le 2 \dim(X)$)  has
an exhaustive filtration by semi-stable $G_L$-modules.  Hence
the semi-simplification of $H^i(X_{\overline{L}},\Qbar_l)$ is semi-stable.
Since $H^i(X_{\overline{L}},\Qbar_l)$ 
is also de Rham~\cite{Faltings}, it follows
that $H^i(X_{\overline{K}},\Qbar_l) = 
H^i(X_{\overline{L}},\Qbar_l)$ is itself semi-stable as a
$G_L$-representation for
$i \le 2 \dim(X)$. The cohomology of $X_{\Kbar}$ vanishes outside this range, so the
claim follows for all $i$.
 This recovers Tsuji's Theorem $C_{\mathrm{pst}}$ (Indeed, this is essentially Tsuji's argument).
 Let us now consider a finite \'{e}tale morphism $Y \rightarrow X$.
We may form a hypercovering $Y_{\bullet} = X_{\bullet} \times_X Y$ of $Y$.
 The properties $(1)$ and $(2)$ of the hypercovering $X_{\bullet}$ are preserved 
 under base change by a finite \'{e}tale map, and thus  the cohomology of $Y$ is also semi-stable over $L$.
 \end{proof}

\begin{remark}
\emph{For an expositional account of the theory of hypercoverings
and cohomological descent 
in the \etale topology, see~\cite{Conrad}.}
\end{remark}
 
Consider the (compact) Shimura variety $\mathrm{Sh}$ (over $\Spec(\OL_K)$)
associated to the unitary similitude group $G$ as in~\cite{Book,HT,Shin}, 
where $\LL = \LL_{\xi}$ is a lisse sheaf correpsonding to the automorphic vector bundle for
an irreducible algebraic
representation $\xi$ of $G$.
Let $\mathcal{A}^m$ denote the $m$th self-product of the
universal abelian variety over $\mathrm{Sh}$, and
let $\pi: \A^m \rightarrow \Sh$ denote the (smooth, proper) projection. For a
suitable $m$,
one can write $\LL = e R^m \pi_{*} \Qbar_l(\t)$ for some $m =
m_{\xi}$ and
$\t = \t_{\xi}$,
and $e$ is some idempotent (cf.~\cite{HT}, p.98).
Finally, let $\Sh(N)$ denote the finite \etale cover of $\Sh$ corresponding to
the addition of auxiliary level $N$-structure for some $N$ co-prime to $p$.
Let $\A^m(N)$ denote the base change of $\A^m$ to $\Sh(N)$; it is finite
\etale over $\A^m$. 
The Leray spectral sequence gives a map
$$H^{p}(\mathrm{Sh}(N)_{\Kbar},R^q \pi_{*} \Qbar_l(\t))
 \Rightarrow H^{p+q}(\mathcal{A}^m(N)_{\Kbar},\Qbar_l(\t)).$$
Multiplication by $n$ on $\A$ induces the map $n^j$
on $R^j 
\pi_* \Qbar_l$. The formation of the spectral sequence
is compatible with this map, and hence it
commutes with the differentials in the spectral sequence, which
correspondingly degenerates (cf. the argument
of Deligne, p.169 of~\cite{Deligne}). Thus
$$H^n(\Sh(N)_{\Kbar},\LL) = e H^{n}(\mathrm{Sh}(N)_{\Kbar},R^m \pi_{*} \Qbar_l(\t))$$
 occurs as a subquotient of
$H^i(\mathcal{A}(N)_{\Kbar},\Qbar_l(\t))$ for some $i$. 
Let $X = \A^m$ and $Y = \A^m(N)$. 
By Lemma~\ref{lemma:conrad}, we deduce that
$$H^{n}(\mathrm{Sh}(N)_{\Kbar},\LL)$$ 
 is  semi-stable over a fixed extension $L/K$ for all $N$ depending only
on $m$ and $\xi$.
If $\pi$ has slightly regular weight, the the Galois representation
associated to $\pi$ in~\cite{Shin} can be realized geometrically
in the \etale cohomology of an automorphic sheaf on $\Sh$ as
considered above.
Moreover, the Galois representations corresponding to automorphic
forms arising in the Taylor--Wiles constructions at auxiliary
primes arise in the \etale cohomology of the same sheaf on
$\Sh(N)$ for some auxiliary level $N$.
 It follows that the local
Galois representations associated to the Hecke rings
$\mathbf{T}_{Q_n}$ are all quotients of a local deformation ring
 involving a fixed finite set of types, which is the necessary local
input for the modularity lifing theorem  (Theorem~7.1) of~\cite{Thorne}.
  \end{proof}

\bibliographystyle{amsalpha}
\bibliography{pst}

\providecommand{\bysame}{\leavevmode\hbox to3em{\hrulefill}\thinspace}
\providecommand{\MR}{\relax\ifhmode\unskip\space\fi MR }
\providecommand{\MRhref}[2]{%
  \href{http://www.ams.org/mathscinet-getitem?mr=#1}{#2}
}
\providecommand{\href}[2]{#2}
\begin{thebibliography}{BLGGT10}

\bibitem[AC89]{AC}
James Arthur and Laurent Clozel, \emph{Simple algebras, base change, and the
  advanced theory of the trace formula}, Annals of Mathematics Studies, vol.
  120, Princeton University Press, Princeton, NJ, 1989. \MR{MR1007299
  (90m:22041)}

\bibitem[BLGG09]{BLGG}
Tom Barnet-Lamb, Toby Gee, and David Geraghty, \emph{The {S}ato-{T}ate
  {C}onjecture for {H}ilbert {M}odular {F}orms}, preprint, 2009.

\bibitem[BLGG10]{BLGGtwo}
\bysame, \emph{Congruences between {H}ilbert modular forms: constructing
  ordinary lifts}, preprint, 2010.

\bibitem[BLGGT10]{BLGGT}
Tom Barnet-Lamb, Toby Gee, David Geraghty, and Richard Taylor, \emph{Potential
  automorphy and change of weight}, preprint, 2010.

\bibitem[BLGHT09]{BLGHT}
Tom Barnet-Lamb, David Geraghty, Michael Harris, and Richard Taylor, \emph{A
  family of {C}alabi-{Y}au varieties and potential automorphy {II}}, to appear
  P.R.I.M.S, 2009.

\bibitem[BM02]{BM}
Christophe Breuil and Ariane M{\'e}zard, \emph{Multiplicit\'es modulaires et
  repr\'esentations de {${\rm GL}\sb 2({\bf Z}\sb p)$} et de {${\rm
  Gal}(\overline{\bf Q}\sb p/{\bf Q}\sb p)$} en {$l=p$}}, Duke Math. J.
  \textbf{115} (2002), no.~2, 205--310, With an appendix by Guy Henniart.
  \MR{MR1944572 (2004i:11052)}

\bibitem[B{\"o}c01]{Bo}
Gebhard B{\"o}ckle, \emph{On the density of modular points in universal
  deformation spaces}, Amer. J. Math. \textbf{123} (2001), no.~5, 985--1007.
  \MR{1854117 (2002g:11070)}

\bibitem[BR92]{BR}
Don Blasius and Jonathan~D. Rogawski, \emph{Tate classes and arithmetic
  quotients of the two-ball}, The zeta functions of {P}icard modular surfaces,
  Univ. Montr\'eal, Montreal, QC, 1992, pp.~421--444. \MR{1155236 (93c:11040)}

\bibitem[Cal11]{CalegariFM}
Frank Calegari, \emph{Even galois representations and the fontaine--mazur
  conjecture}, Invent. Math. \textbf{185} (2011), 1--16.

\bibitem[Car10]{Caraiani}
Ana Caraiani, \emph{Local-global compatibility and the action of monodromy on
  nearby cycles}, preprint arXiv:1010.2188, 2010.

\bibitem[CGP10]{PRG}
Brian Conrad, Ofer Gabber, and Gopal Prasad, \emph{Pseudo-reductive groups},
  New Mathematical Monographs, vol.~17, Cambridge University Press, Cambridge,
  2010. \MR{2723571}

\bibitem[Che09]{Chenevier}
Ga{\"e}tan Chenevier, \emph{Une application des vari\'{e}t\'{e}s de {H}ecke des
  groupes unitaires}, preprint, 2009.

\bibitem[Che10]{Chen}
\bysame, \emph{On the infinite fern of galois representations of unitary type},
  preprint, 2010.

\bibitem[CHL]{Book}
Laurent Clozel, Michael Harris, and J.-P. Labesse, \emph{Stabilisation de la
  formule des traces, vari\'{e}t\'{e}s de shimura, et applications
  arithm\'{e}tiques}, to appear, , (s\'{e}minaire G.R.F.A. Paris 7).

\bibitem[CM09]{CM}
Frank Calegari and Barry Mazur, \emph{Nearly ordinary {G}alois deformations
  over arbitrary number fields}, J. Inst. Math. Jussieu \textbf{8} (2009),
  no.~1, 99--177. \MR{2461903 (2009i:11070)}

\bibitem[Col10]{Colmez}
Pierre Colmez, \emph{Repr\'esentations de {${\rm GL}_2(\bold Q_p)$} et
  {$(\phi,\Gamma)$}-modules}, Ast\'erisque (2010), no.~330, 281--509.
  \MR{2642409}

\bibitem[Con]{Conrad}
Brian Conrad, \emph{Cohomological descent}, preprint.

\bibitem[Del73]{Deligne}
P.~Deligne, \emph{Formes modulaires et repr\'esentations de {${\rm GL}(2)$}},
  Modular functions of one variable, {II} ({P}roc. {I}nternat. {S}ummer
  {S}chool, {U}niv. {A}ntwerp, {A}ntwerp, 1972), Springer, Berlin, 1973,
  pp.~55--105. Lecture Notes in Math., Vol. 349. \MR{0347738 (50 \#240)}

\bibitem[dJ96]{dJ}
A.~J. de~Jong, \emph{Smoothness, semi-stability and alterations}, Inst. Hautes
  \'Etudes Sci. Publ. Math. (1996), no.~83, 51--93. \MR{1423020 (98e:14011)}

\bibitem[Eme]{EmertonFM}
Matthew Emerton, \emph{Local--global compatibility in the $p$-adic langlands
  programme}, preprint.

\bibitem[Fal88]{Faltings}
Gerd Faltings, \emph{{$p$}-adic {H}odge theory}, J. Amer. Math. Soc. \textbf{1}
  (1988), no.~1, 255--299. \MR{924705 (89g:14008)}

\bibitem[FM95]{FM}
Jean-Marc Fontaine and Barry Mazur, \emph{Geometric {G}alois representations},
  Elliptic curves, modular forms, \& {F}ermat's last theorem ({H}ong {K}ong,
  1993), Ser. Number Theory, I, Int. Press, Cambridge, MA, 1995, pp.~41--78.
  \MR{1363495 (96h:11049)}

\bibitem[Fon94]{Fontaine}
Jean-Marc Fontaine, \emph{Repr\'esentations {$l$}-adiques potentiellement
  semi-stables}, Ast\'erisque (1994), no.~223, 321--347, P{\'e}riodes
  $p$-adiques (Bures-sur-Yvette, 1988). \MR{1293977 (95k:14031)}

\bibitem[Ger09]{G}
David Geraghty, \emph{Modularity lifting theorems for ordinary {G}alois
  representations}, preprint, 2009.

\bibitem[GHTT10]{Herzig}
Robert Guralnick, Florian Herzig, Richard Taylor, and Jack Thorne,
  \emph{Adequate subgroups}, Appendix to \cite{Thorne}, 2010.

\bibitem[HSBT10]{TaylorSB}
Michael Harris, Nick Shepherd-Barron, and Richard Taylor, \emph{A family of
  {C}alabi-{Y}au varieties and potential automorphy}, Annals of Math.
  \textbf{171} (2010), 779--813.

\bibitem[HT01]{HT}
Michael Harris and Richard Taylor, \emph{The geometry and cohomology of some
  simple {S}himura varieties}, Annals of Mathematics Studies, vol. 151,
  Princeton University Press, Princeton, NJ, 2001, With an appendix by Vladimir
  G. Berkovich. \MR{MR1876802 (2002m:11050)}

\bibitem[JS81]{J}
H.~Jacquet and J.~A. Shalika, \emph{On {E}uler products and the classification
  of automorphic forms. {II}}, Amer. J. Math. \textbf{103} (1981), no.~4,
  777--815. \MR{MR623137 (82m:10050b)}

\bibitem[Kis07]{Kisin}
Mark Kisin, \emph{Modularity of 2-dimensional {G}alois representations},
  Current developments in mathematics, 2005, Int. Press, Somerville, MA, 2007,
  pp.~191--230. \MR{MR2459302}

\bibitem[Kis08]{Kisindef}
\bysame, \emph{Potentially semi-stable deformation rings}, J. Amer. Math. Soc.
  \textbf{21} (2008), no.~2, 513--546. \MR{MR2373358 (2009c:11194)}

\bibitem[Kis09]{KisinFM}
\bysame, \emph{The {F}ontaine-{M}azur conjecture for {${\rm GL}_2$}}, J. Amer.
  Math. Soc. \textbf{22} (2009), no.~3, 641--690. \MR{2505297 (2010j:11084)}

\bibitem[Kl{\"u}00]{Kluners}
J{\"u}rgen Kl{\"u}ners, \emph{A polynomial with {G}alois group {${\rm
  SL}_2(11)$}}, J. Symbolic Comput. \textbf{30} (2000), no.~6, 733--737,
  Algorithmic methods in Galois theory. \MR{1800035 (2001h:12007)}

\bibitem[KW09a]{Khare}
Chandrashekhar Khare and Jean-Pierre Wintenberger, \emph{Serre's modularity
  conjecture. {I}}, Invent. Math. \textbf{178} (2009), no.~3, 485--504.
  \MR{2551763 (2010k:11087)}

\bibitem[KW09b]{Khare2}
\bysame, \emph{Serre's modularity conjecture. {II}}, Invent. Math. \textbf{178}
  (2009), no.~3, 505--586. \MR{2551764 (2010k:11088)}

\bibitem[Mal00]{Malle}
Gunter Malle, \emph{Multi-parameter polynomials with given {G}alois group}, J.
  Symbolic Comput. \textbf{30} (2000), no.~6, 717--731, Algorithmic methods in
  Galois theory. \MR{1800034 (2002a:12007)}

\bibitem[Maz97]{GM}
B.~Mazur, \emph{An ``infinite fern'' in the universal deformation space of
  {G}alois representations}, Collect. Math. \textbf{48} (1997), no.~1-2,
  155--193, Journ{\'e}es Arithm{\'e}tiques (Barcelona, 1995). \MR{1464022
  (98g:11053)}

\bibitem[Ram02]{R}
Ravi Ramakrishna, \emph{Deforming {G}alois representations and the conjectures
  of {S}erre and {F}ontaine-{M}azur}, Ann. of Math. (2) \textbf{156} (2002),
  no.~1, 115--154. \MR{MR1935843 (2003k:11092)}

\bibitem[Ser03]{Serre}
Jean-Pierre Serre, \emph{On a theorem of {J}ordan}, Bull. Amer. Math. Soc.
  (N.S.) \textbf{40} (2003), no.~4, 429--440 (electronic). \MR{1997347
  (2004f:11040)}

\bibitem[Shi10]{Shin}
Sug~Woo Shin, \emph{Galois representations arising from some compact {S}himura
  varieties}, to appear Annals of Math, 2010.

\bibitem[Sno09]{snow}
A~Snowden, \emph{{Weight two representations of totally real fields}}.

\bibitem[Tay10]{TP}
Richard Taylor, \emph{The image of complex conjugation in l-adic
  representations associated to automorphic forms}, to appear in Algebra \&
  Number Theory, 2010.

\bibitem[Tho10]{Thorne}
Jack Thorne, \emph{On the automorphy of $l$-adic {G}alois representations with
  small residual image}, to appear in the Journal of the Institute of
  Mathematics of Jussieu, 2010.

\bibitem[Tsu98]{Ts}
Takeshi Tsuji, \emph{{$p$}-adic {H}odge theory in the semi-stable reduction
  case}, Proceedings of the {I}nternational {C}ongress of {M}athematicians,
  {V}ol. {II} ({B}erlin, 1998), no. Extra Vol. II, 1998, pp.~207--216
  (electronic). \MR{1648071 (99g:14020)}

\bibitem[TY07]{TY}
Richard Taylor and Teruyoshi Yoshida, \emph{Compatibility of local and global
  {L}anglands correspondences}, J. Amer. Math. Soc. \textbf{20} (2007), no.~2,
  467--493 (electronic). \MR{MR2276777 (2007k:11193)}

\end{thebibliography}

\end{document}